\documentclass[10pt,a4paper]{article}
\usepackage[latin1]{inputenc}
\usepackage{anysize}
\usepackage{graphicx}

\usepackage{amssymb,amsmath,amscd,amsfonts,amsthm,mathrsfs,color}

\newcommand{\cc}[1]{\mathcal{#1}}

\newcommand{\ra}[1]{\overrightarrow{#1}}
\newcommand{\la}[1]{\overleftarrow{#1}}

\MakeRobust{\overrightarrow}

\numberwithin{equation}{section}
\newtheorem{theorem}{Theorem}
\newtheorem{lemma}[theorem]{Lemma}
\newtheorem{corollary}[theorem]{Corollary}
\newtheorem{conjecture}[theorem]{Conjecture}
\newtheorem{question}[theorem]{Question}
\usepackage{url}
\begin{document}
\title{Refining a Tree-Decomposition which Distinguishes Tangles}
\author{Joshua Erde}
\maketitle
\begin{abstract}
Roberston and Seymour introduced tangles of order $k$ as objects representing highly connected parts of a graph and showed that every graph admits a tree-decomposition of adhesion $<\!k$ in which each tangle of order $k$ is contained in a different part. Recently, Carmesin, Diestel, Hamann and Hundertmark showed that such a tree-decomposition can be constructed in a canonical way, which makes it invariant under automorphisms of the graph. These canonical tree-decompositions necessarily have parts which contain no tangle of order $k$, which we call inessential. Diestel asked what could be said about the structure of the inessential parts. In this paper we show that the torsos of the inessential parts in these tree-decompositions have branch-width $<\!k$, allowing us to further refine the canonical tree-decompositions, and also show that a similar result holds for $k$-blocks. We also use our methods to further refine the essential parts in such a tree-decomposition in a similar fashion.
\end{abstract}
\section{Introduction}
A classical notion in graph theory is that of the block-cut vertex tree of a graph. It tells us that if we consider the maximal $2$-connected components of a connected graph $G$ then they are arranged in a `tree-like' manner, separated by the cut vertices of $G$. A result of Tutte's \cite{TutteGrTh} says that we can decompose any $2$-connected graph in a similar way. Broadly, it says that every $2$-connected graph can be decomposed in a `tree-like' manner, so that the parts are separated by vertex sets of size at most $2$, and every part, together with the edges in the separators adjacent to it, is either $3$-connected or a cycle. We call the union of a part and the edges in the separators adjacent to it the \emph{torso} of the part. In contrast to the first example not every part, or even torso, of this decomposition is $3$-connected, and indeed it is easy to show that not every $2$-connected graph can be decomposed in this way such that every torso is $3$-connected. 

It has long been an open problem how best to extend these results for general $k$, the aim being to decompose a $(k-1)$-connected graph into its `$k$-connected components', where the precise meaning of what these `$k$-connected components' should be considered to be has varied. Tutte's example shows us that there may be parts of this decomposition which are not highly connected, but rather play a structural role in the graph of linking the highly connected parts together, and further that the highly connected parts of the decomposition may not correspond exactly to $k$-connected subgraphs.

Whereas initially these `$k$-connected components' were considered as concrete structures in the graph itself, Robertson and Seymour \cite{GMX} radically re-interpreted them as \emph{tangles of order $k$}, which for brevity we will refer to as \emph{$k$-tangles}\footnote{Precise definitions of many of the terms in the introduction will be postponed until Section \ref{s:back}, where all the necessary background material will be introduced.}. Instead of being defined in terms of the edges and vertices of a graph, these objects were defined in terms of structures on the set of low-order separations of a graph. 

Robertson and Seymour showed that, given any set of distinct $k$-tangles $T_1,T_2, \ldots , T_n$ in a graph $G$, there is a tree-decomposition $(T,\cc{V})$ of $G$ with precisely $n$ parts in which the orientations induced by the tangles $T_i$ on $E(T)$ each have distinct sinks, where we say the tangle is \emph{contained} in this sink. We say that such a tree-decomposition \emph{distinguishes} the tangles $T_1,T_2, \ldots, T_n$. They showed further that these tree-decompositions can be chosen so that the separators between the parts are in some way minimal with respect to the tangles considered. We say that such a tree-decomposition distinguishes the $k$-tangles \emph{efficiently}. If we call the largest size of a separator in a tree-decomposition the \emph{adhesion} of the tree-decomposition, then in particular their result implies the following:
\begin{theorem}[Robertson and Seymour \cite{GMX}]\label{t:RStan}
For every graph $G$ and $k \geq 2$ there exists a tree-decomposition $(T,\cc{V})$ of $G$ of adhesion $<\!k$ which distinguishes the set of $k$-tangles in $G$ efficiently.
\end{theorem}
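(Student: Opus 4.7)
The plan is to produce a nested family $N$ of separations of $G$, each of order less than $k$, such that every pair of distinct $k$-tangles in $G$ is distinguished efficiently by some separation in $N$, and then to invoke the standard correspondence between nested systems of separations and tree-decompositions to build $(T,\cc{V})$.

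First, I would observe that any two distinct $k$-tangles $T_i, T_j$ must disagree on some oriented separation of order less than $k$, by the tangle axioms. Among all such separations let $\kappa(T_i,T_j)$ denote the minimum possible order, and call any separation of order $\kappa(T_i,T_j)$ distinguishing $T_i$ from $T_j$ \emph{efficient} for the pair. Pick an efficient distinguishing separation for each unordered pair; this gives a finite family $N_0$ of separations of order less than $k$ that efficiently distinguishes every pair of $k$-tangles, but which may contain crossing pairs.

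The main step is to iteratively uncross $N_0$. The key tool is submodularity of the order function on separations: if $(A,B)$ and $(C,D)$ cross, then among the four \emph{corner} separations built from the intersections and unions of their sides, the two on opposite corners have orders summing to at most the sum of the orders of $(A,B)$ and $(C,D)$. For any pair of $k$-tangles distinguished by $(A,B)$, the consistency axioms of a tangle force at least one of the two admissible corners on each diagonal to also distinguish that pair, and its order is then bounded above by the order of $(A,B)$, so efficiency is preserved. One can therefore replace a crossing pair by a nested pair of corner separations that still efficiently distinguishes the relevant tangles. The main obstacle is to orchestrate these replacements globally, so that uncrossing one pair does not create new crossings with the rest of $N_0$; this is handled by the usual trick of always selecting the corner lying on a prescribed side of one of the two separations, and showing by an appropriate potential-function argument that the process terminates in a nested family $N$.

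Finally, given a nested family of separations of order less than $k$ of $G$, the standard construction produces a tree-decomposition $(T,\cc{V})$ of $G$ of adhesion less than $k$ whose separators correspond exactly to the elements of $N$. Since every pair of distinct $k$-tangles is split by some edge of $T$, each $k$-tangle orients the edges of $T$ consistently towards a unique sink, and hence distinct $k$-tangles are contained in distinct parts. The efficiency of the separations in $N$ translates directly into the efficiency of the tree-decomposition, giving the statement.
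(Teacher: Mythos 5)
The paper itself offers no proof of Theorem~\ref{t:RStan}; it is quoted from Robertson and Seymour \cite{GMX}. Your sketch follows the standard modern route to the tree-of-tangles theorem (as in \cite{CDHH13CanonicalAlg}): choose an efficient distinguisher for every pair of $k$-tangles, uncross the resulting family using submodularity of the order function, and convert the nested family into a tree-decomposition. That strategy is sound, and your final step (nested family $\to$ tree-decomposition of adhesion $<\!k$, unique sink part for each consistent tangle) is fine as stated.

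The gap is in the one sentence you pass over quickly: ``consistency forces at least one of the two admissible corners \dots to also distinguish that pair, and its order is then bounded above by the order of $(A,B)$.'' Neither half is justified as written, and this is where the whole proof lives. Submodularity bounds only the \emph{sum} of the orders of two opposite corners; to bound the order of the particular corner that is to take over the job of $(A,B)$ you must also use the efficiency of the crossing separation $(C,D)$: if that corner had order exceeding $|A\cap B|$, then the opposite corner would have order strictly below $|C\cap D|$ and, by the same orientation argument, would distinguish the pair for which $(C,D)$ was chosen, contradicting its efficiency. Moreover, a $k$-tangle orients a corner only once you know its order is $<\!k$, so the order bound must come \emph{before} any claim that the corner distinguishes the tangles; and for one of the two tangles mere consistency does not determine its orientation of the corner --- you need the tangle (profile) property that $\ra{r},\ra{s}\in O$ forces $\ra{r}\vee\ra{s}\in O$ whenever that corner has order $<\!k$. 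Finally, the ``potential function'' is simply the number of crossing pairs: it strictly decreases because a corner of $r$ and $s$ is nested with every separation nested with both $r$ and $s$; choosing ``the corner on a prescribed side'' is not what prevents new crossings. With these points made precise your outline completes to the standard proof, of which the Robertson--Seymour statement is the graph-tangle case.
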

More recently Carmesin, Diestel, Hamann and Hundertmark \cite{CDHH13CanonicalAlg} described a family of algorithms that can be used to build tree-decompositions which distinguish the set of $k$-tangles in a graph and are \emph{canonical}, that is, they are invariant under every automorphism of the graph. 

Just as in Tutte's theorem, where there were parts of the tree-decomposition whose torsos were not 3-connected, it is easy to show that the tree-decompositions formed in \cite{CDHH13CanonicalAlg} must contain parts which do not contain any $k$-tangle. Since the general motivation for these tree-decompositions is to decompose the graph into its `$k$-connected components' in a way that displays the global structure of the graph, it is natural to ask further questions about the structure of these tree-decompositions. In \cite{CDHH13CanonicalParts} Carmesin et al. analysed the structure of the trees that the various algorithms given in \cite{CDHH13CanonicalAlg} produced. One particular question that was asked is what can be said about the structure of the parts which do not contain a $k$-tangle. We will call the parts of a tree-decomposition that contain a $k$-tangle \emph{essential}, and those that do not \emph{inessential}.

For example, if the whole graph contains no $k$-tangle, then these canonical tree-decompositions tell us nothing about the graph, as they consist of just one inessential part. However there are theorems which describe the structure of a graph which contains no $k$-tangle. In the same paper where they introduced the concept of tangles, Roberston and Seymour \cite{GMX} showed that a graph which contains no $k$-tangle has branch-width $<\!k$, and in fact that the converse is also true, a graph with branch-width $\geq k$ contains a $k$-tangle. Having branch-width $< k$ can be rephrased in terms of the existence of a certain type of tree-decomposition (See e.g. \cite{DiestelOumDualityIb}). A nice property of these tree-decompositions is that each of the parts is in some sense `too small' to contain a $k$-tangle. In this way these tree-decompositions witness that a graph has no $k$-tangle by splitting the graph into a number of parts, each of which cannot contain a $k$-tangle and similarly a $k$-tangle witnesses that a graph does not have such a tree-decomposition.

A natural question to then ask is, do the inessential parts in the tree-decompositions from \cite{CDHH13CanonicalAlg} admit tree-decompositions of the same form, into parts which are too small to contain a $k$-tangle? If so we might hope to refine these canonical tree-decompositions by decomposing further the inessential parts. By combining these decompositions we would get an overall tree-decomposition of $G$ consisting of some essential parts, each containing a $k$-tangle in $G$, and some inessential parts, each of which is `small' enough to witness the fact that no $k$-tangle is contained in that part.

We first note that we cannot hope for these refinements to also be canonical. For example consider a graph formed by taking a large cycle $C$ and adjoining to each edge a large complete graph $K_n$. Then a canonical tree-decomposition which distinguishes the $3$-tangles in this graph will contain the cycle $C$ as an inessential part. However there is no canonical tree-decomposition of $C$ with branch-width $<3$. Indeed, if such a tree-decomposition contained any of the $2$-separations of $C$ as an adhesion set then, since all the rotations of $C$ lie in the automorphism group of $G$, every rotation of this separation must appear as an adhesion set. However these separations cannot all appear as the adhesion sets in any tree-decomposition, as every pair of vertices in a $2$-separation of $C$ are themselves separated by some rotation of that separation.

If we drop the restriction that the refinement be canonical then, at first glance, it might seem like there should clearly be such a refinement. If there is no $k$-tangle contained in a part $V_t$ in a tree-decomposition, $(T,\cc{V})$, then by the theorem of Robertson and Seymour there should be a tree-decomposition of that part with branch-width $<\!k$. However there is a problem with this naive approach, in that we have no guarantee that we can insert the tree-decomposition of this part into the existing tree-decomposition. In particular it could be the case that this tree-decomposition splits up the separators of the part $V_t$ in $(T,\cc{V})$. One way to avoid this problem is to instead consider the torso of the part $V_t$. If we have a tree-decomposition of the torso we can insert it into the original tree-decomposition, but it is not clear that adding these extra edges can not increase the branch-width of the part. In fact it is easy to find examples where choosing a bad canonical tree-decomposition to distinguish the set of $k$-tangles in a graph results in inessential parts whose torsos have branch-width $\geq k$.

For example consider the following graph: We start with the union of three large complete graphs, $K_{N_1}$, $K_{N_2}$ and $K_{N_3}$, for $N_1,N_2 ,N_3>>k$. We pick a set of $(k-1)/2$ vertices from each graph, which we denote by $X_1$, $X_2$ and $X_3$ respectively, and join each of these sets completely to a new vertex $x$.  It is a simple check that there are three $k$-tangles in this graph, corresponding to the three large complete subgraphs. However, consider the following tree-decomposition of the graph into four parts $K_{N_1} \cup X_2$, $K_{N_2} \cup X_3$, $K_{N_3} \cup X_1$ and $X_1 \cup X_2 \cup X_3 \cup \{x\}$. This is a tree-decomposition which distinguishes the $k$-tangles in the graph, and the part $X_1 \cup X_2 \cup X_3 \cup \{x\}$ is inessential. However the torso of this middle part is a complete graph of order $3(k-1)/2 + 1$, which can be seen to have branch-width $\geq k$. 

\begin{figure}[!ht]
\center
\includegraphics[scale=0.2]{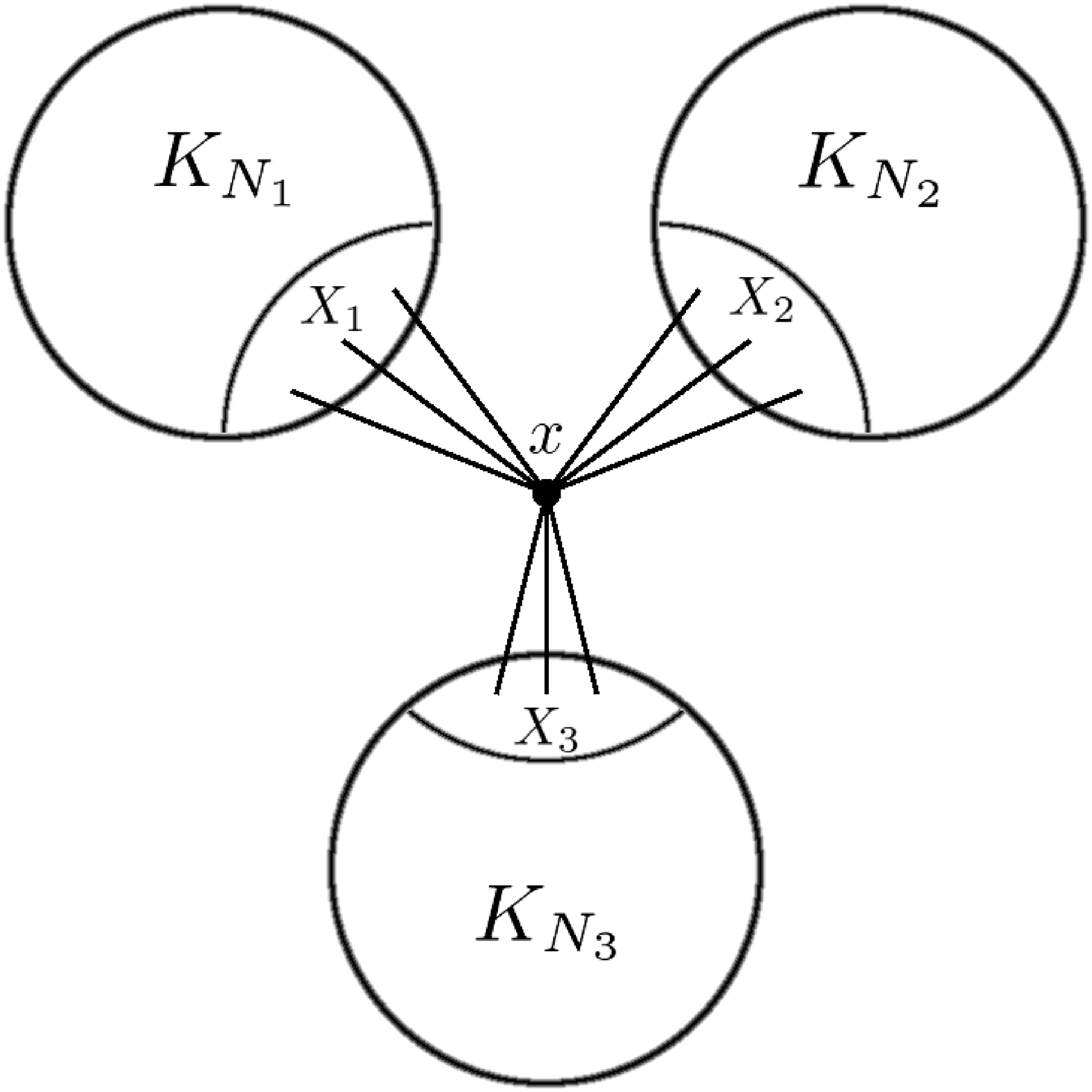}
\caption{A graph with a bad tangle-distinguishing tree-decomposition.}\label{f:example}
\end{figure}

We will show that, for the canonical tree-decompositions of Carmesin et al, the torsos of the inessential parts all have branch-width $<\!k$ and so it is possible to decompose the torsos of the inessential parts in this way.

\begin{theorem}\label{t:maintangle}
For every graph $G$ and $k \geq 3$ there exists a canonical tree-decompositon $(T,\cc{V})$ of $G$ of adhesion $<\!k$ such that
\begin{itemize}
\item $(T,\cc{V})$ distinguishes the set of $k$-tangles in $G$ efficiently;
\item The torso of every inessential part has branch-width $<\!k$.
\end{itemize}
\end{theorem}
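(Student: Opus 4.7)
The plan is to take any of the canonical tangle-distinguishing tree-decompositions $(T, \mathcal{V})$ produced by the algorithms of Carmesin, Diestel, Hamann and Hundertmark and to prove that it already satisfies the second bullet of the theorem. By the Robertson and Seymour duality between branch-width and tangles, a graph has branch-width $<\!k$ if and only if it admits no $k$-tangle; hence it suffices to show that the torso $H_t$ of every inessential part $V_t$ carries no $k$-tangle.

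Suppose, for contradiction, that $\tau'$ is a $k$-tangle in $H_t$. The main step is to construct from $\tau'$ a $k$-tangle $\tau$ of $G$ that is contained in $V_t$, contradicting inessentiality. The construction exploits a two-way correspondence between low-order separations of $G$ and of $H_t$. In one direction, every separation $(A, B)$ of $H_t$ of order $<\!k$ extends to a separation of $G$ of the same order by attaching, for each neighbour $t_i$ of $t$ in $T$, the whole subtree rooted at $t_i$ to whichever side of $(A,B)$ contains the adhesion set $S_i = V_t \cap V_{t_i}$; this is well-defined because the clique on $S_i$ in the torso forces $S_i$ to lie entirely on one side. In the other direction, given a separation $(C,D)$ of $G$ of order $<\!k$, a submodularity (corner) argument, together with the fact that the CDHH separations are chosen efficiently, allows one to replace $(C, D)$ by a separation of order $\leq |C \cap D|$ in which each adhesion set lies on a single side; this projects to a separation of the torso, and the orientation assigned by $\tau'$ can be transferred back to $(C, D)$ to define $\tau$.

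The main obstacle will be verifying the third tangle axiom for $\tau$, namely that the small sides of any three of its separations do not cover $V(G)$. This has to be reduced to the corresponding axiom for $\tau'$ inside $H_t$, by showing that any such covering of $V(G)$ projects to a covering of $V_t$ by the small sides of three separations which genuinely belong to $\tau'$; once again, the efficiency of the CDHH separations is what guarantees that the projected separations still have order $<\!k$. Once $\tau$ has been verified as a $k$-tangle of $G$, it is automatic that it is contained in $V_t$: each tree separation $(A_e, B_e)$ with $V_t \subseteq A_e$ projects to the trivial separation $(V_t, S_e)$ of $H_t$, which the tangle axioms force $\tau'$ to orient with $V_t$ on the large side, so $\tau$ orients every tree edge at $t$ towards $V_t$. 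This contradicts inessentiality and yields the desired branch-width bound; canonicity is then inherited directly from the CDHH decomposition.
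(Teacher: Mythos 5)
Your overall strategy -- show that the torso $H_t$ of an inessential part carries no $k$-tangle and then invoke the Robertson--Seymour branch-width/tangle duality -- is legitimate in outline, but the key step, lifting a $k$-tangle $\tau'$ of $H_t$ to a $k$-tangle $\tau$ of $G$, has a genuine gap exactly where the whole difficulty of the theorem sits: the verification of the third tangle axiom. Your reduction claims that if three small sides of separations in $\tau$ cover $G$, then their projections give three separations of $\tau'$ whose small sides cover $H_t$. But the tangle axiom in the torso is about covering the \emph{torso}, including the new clique edges on the adhesion sets, and a cover of $G$ need not project to such a cover: a torso edge $uv$ with $u,v$ in an adhesion set $S_i$ is covered only if some single projected small side contains both $u$ and $v$, which is not implied by the original small sides covering every vertex and every $G$-edge (each of $u,v$ may be picked up by a different small side, or only inside separators). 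The paper's own example in the introduction (three $K_{N_i}$'s attached through sets $X_i$ to a vertex $x$) shows this is not a technicality: there the torso of the inessential middle part is a clique of order $3(k-1)/2+1$ and \emph{does} have a $k$-tangle, while $G$ has no $k$-tangle in that part, and if you run your transfer on that example the lifted orientation violates the tangle axiom precisely because the three projected small sides fail to cover the torso's clique edges. So efficiency of the adhesion separations must be used at this covering step -- it is the only hypothesis separating the true statement from that counterexample -- whereas your sketch invokes efficiency only to keep the orders of the modified separations below $k$, which is not where the argument breaks. A secondary, unaddressed point is well-definedness: the orientation you transfer to $(C,D)$ depends on the chosen uncrossing that puts each adhesion set on one side, and you give no argument that different choices yield the same orientation (or a way to dispense with this).

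For comparison, the paper avoids lifting tangles out of torsos altogether. It works in the ambient separation system $S_k$ of $G$ and applies the abstract Strong Duality Theorem to the family $\cc{F}$ augmented by the singletons $\{{\la{s}\!}_i\}$ for the star $\sigma$ at the inessential part (Lemma \ref{l:main}): either there is an $\cc{F}'$-tangle, which is a $k$-tangle of $G$ orienting $\sigma$ inwards and hence contained in the part (contradicting inessentiality), or there is an $S_k$-tree over $\cc{F}'$ with the ${\ra{s}\!}_i$ as leaf separations, which, being nested with $\sigma$, restricts to a branch-decomposition of the torso of width $<\!k$. The efficiency of the ${\ra{s}\!}_i$ enters there, via submodularity, to show that each ${\ra{s}\!}_i$ emulates a suitable leaf separation so that the $S_k$-tree can be shifted onto it (Lemma \ref{l:link}). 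If you want to salvage your route, you would need to prove the lifting statement with efficiency doing real work in the covering argument -- and that is essentially as hard as the theorem itself.
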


More recently another potential candidate for these `$k$-connected components' has been considered in the literature, called \emph{$k$-blocks}. We say that a set of at least $k$ vertices in a graph is \emph{$(<\!k)$-inseparable} if no set of $<\!k$ vertices can separate any two of the vertices. A \emph{$k$-block} is a maximal $(<\!k)$-inseparable set of vertices. These $k$-blocks differ from subgraphs which are $k$-connected in the classical sense in that their connectivity is measured in the ambient graph rather than the subgraph itself. For example if we take a large independent set, $I$, and join each pair of vertices in $I$ by $k$ vertex disjoint paths, then $I$ is a $k$-block, even though as a subgraph it is independent. Carmesin, Diestel, Hundertmark and Stein \cite{confing} showed that, for any graph $G$, there is a canonical tree-decomposition which distinguishes the set of $k$-blocks. The work of Carmesin et al \cite{CDHH13CanonicalAlg} extended the results of \cite{confing} to more general types of highly connected substructures in graphs, and these results have been extended further by Diestel, Hundertmark and Lemanczyk \cite{HL2015} to more general combinatorial structures, such as matroids.

As before, these tree-decompositions will have some parts which are \emph{essential}, that is they contain a $k$-block, and some parts which are \emph{inessential}, and it is natural to ask about the structure of these parts. Recently, Diestel, Eberenz and Erde \cite{Block} proved a duality theorem for $k$-blocks, analogous to the tangle/branch-width duality of Robertson and Seymour. The result implies that a graph contains a $k$-block if and only if it does not admit a tree-decomposition of \emph{block-width} $<\!k$, where as before, every part in a tree-decomposition of block-width $<\!k$ is in some sense `too small' to contain a $k$-block. We also show a corresponding result for blocks.

\begin{theorem}\label{t:mainblock}
For every graph $G$ and $k \geq 3$ there exists a canonical tree-decompositon $(T,\cc{V})$ of $G$ of adhesion $<\!k$ such that
\begin{itemize}
\item $(T,\cc{V})$ distinguishes the set of $k$-blocks in $G$ efficiently;
\item The torso of every inessential part has block-width $<\!k$.
\end{itemize}
\end{theorem}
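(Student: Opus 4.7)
The overall strategy is to take a suitable canonical tree-decomposition $(T,\cc{V})$ as a starting point and show it already satisfies the conclusion, by proving that the torso of every inessential part contains no $k$-block. By the block-width duality theorem of Diestel, Eberenz and Erde \cite{Block}, this is equivalent to the statement that such a torso has block-width $<\!k$, which is what the theorem requires.

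Concretely, I would begin with the canonical tree-decomposition produced by the algorithm in \cite{confing}, which has adhesion $<\!k$ and efficiently distinguishes the set of $k$-blocks of $G$. Let $V_t$ be an inessential part and $H_t$ its torso. It then suffices to prove the following transfer lemma: every $k$-block $B$ of $H_t$ is also a $k$-block of $G$. Since any $k$-block of $H_t$ is a subset of $V_t$ and $V_t$ contains no $k$-block of $G$ by inessentiality, this lemma forces $B$ not to exist, giving the desired conclusion.

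To prove the transfer lemma, take any two vertices $u,v \in B$ and any separation $(X,Y)$ of $G$ of order $<\!k$ with $u \in X\setminus Y$ and $v \in Y\setminus X$. I would construct a separation $(X',Y')$ of $H_t$ of order at most $|X \cap Y|$ that still separates $u$ from $v$, contradicting the assumption that $B$ is $(<\!k)$-inseparable in $H_t$. The construction proceeds by \emph{flattening} the separation near each adhesion set: for each neighbour $t'$ of $t$ in $T$ with adhesion $S_{tt'}$, one uses the submodularity of the order function on separations of $G$ to push $(X,Y)$ entirely to one side within the component of $T - t$ containing $t'$, without increasing its order. After flattening at every neighbour of $t$, the restriction of the resulting separation to $V_t$ has each adhesion set $S_{tt'}$ on a single side, hence respects the torso edges, and so yields the required separation of $H_t$.

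The principal obstacle is making the flattening step rigorous. At each neighbour $t'$ one must verify that submodularity produces a modified separation of $G$ of order at most $|X \cap Y|$ that still separates $u$ from $v$ while placing $S_{tt'}\setminus(X\cap Y)$ entirely on one side, and that successive flattenings at different neighbours remain consistent with one another. This is where the efficiency of the original tree-decomposition is essential: the minimality of the adhesion sets among separations distinguishing $k$-blocks is what rules out the submodular corner that would otherwise raise the order above $|X\cap Y|$. Once the flattening is in place, combining the transfer lemma with the block-width duality theorem delivers Theorem \ref{t:mainblock}.
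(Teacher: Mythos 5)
Your high-level plan (show the torso of an inessential part has no $k$-block, then invoke the block-width duality of \cite{Block} for the torso as a graph in its own right) would indeed yield the theorem, and it is genuinely different from the paper's route, which never looks at blocks of the torso directly: the paper proves an abstract lemma (Lemma \ref{l:main}) by applying the Strong Duality Theorem to the augmented star family $\overline{\cc{F}}$ and then shifting the leaf separations of the resulting $S_k$-tree onto the separations of the star $\sigma$, using efficiency plus submodularity to show each ${\ra{s}\!}_i$ emulates the relevant leaf separation. However, your proof of the transfer lemma has a genuine gap, and it sits exactly at the point the paper's machinery was built to handle. The flattening step controls the \emph{order} correctly: if the adhesion separation $(C,D)$ efficiently distinguishes $k$-blocks $b_1\subseteq C$ and $b_2\subseteq D$, then the corner of $(X,Y)$ and $(C,D)$ on the side of $b_1$ has order at most $|X\cap Y|$ because the opposite corner still distinguishes $b_1$ from $b_2$. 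But you do not get to choose which corner this is; it is dictated by which side of $(X,Y)$ the block $b_1$ lies on. If $u$ or $v$ lies in the adhesion set $C\cap D$ and on the other side of $(X,Y)$, the cheap corner absorbs that vertex into its separator, so the flattened separation no longer separates $u$ from $v$, and the restriction to the torso gives no contradiction. Worse, if $u$ and $v$ both lie in a single adhesion set they are joined by a torso edge, so \emph{no} separation of $H_t$ separates them at all, and your argument cannot even get started for that pair --- yet such a pair may well be separable in $G$ by fewer than $k$ vertices (an efficient separator between two blocks can consist of vertices that are pairwise separable by two vertices, e.g.\ one interior vertex from each of $k-1$ long paths joining two cliques). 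So the statement ``every $k$-block of $H_t$ is a $k$-block of $G$'' cannot be proved pair-by-pair in the way you describe; for torso-adjacent pairs it needs a different idea.

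Note also that this is not a removable technicality: the paper's example of three cliques $K_{N_i}$ attached to a vertex $x$ shows that a part containing no $k$-block can have a torso that \emph{is} a $k$-block (a clique on $3(k-1)/2+1\geq k$ vertices), so any transfer argument must use the $\phi$-essentiality of the adhesion separations globally, not just locally at each uncrossing. In the paper this is exactly the content of Lemma \ref{l:main}: the hypothesis that each $s_i$ efficiently distinguishes two $\cc{F}$-tangles is used to show ${\ra{s}\!}_i$ emulates a leaf separation ${\ra{x}\!}_{i,j}$ of an $S_k$-tree over $\overline{\cc{F}}$, and the shifting lemma (Lemma \ref{l:link}) then converts that tree into one over $\cc{F}'$ with the ${\ra{s}\!}_i$ as leaves, which is what witnesses block-width $<\!k$ of the torso. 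Your sketch would need an analogous global mechanism (or a separate argument that vertices of one adhesion set lying in a common torso $k$-block are $(<\!k)$-inseparable in $G$) before the transfer lemma, and hence the theorem, is established.
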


The main result in this paper, of which Theorems \ref{t:maintangle} and \ref{t:mainblock} are corollaries, is a lemma that gives sufficient conditions on the separators of an inessential part in a distinguishing tree-decomposition for the torso to have small width. These conditions seem quite natural and reasonable, in particular they are satisfied by every part of the canonical tangle/block-distinguishing tree-decompositions constructed by Carmesin et al. 

In some sense the canonical tangle-distinguishing tree-decompositions tell us most about the structure of the graph when the essential parts correspond closely to the tangles inside them. For example consider the following two graphs, firstly two $K_{N}$s overlapping in $k-1$ vertices and secondly two $K_{3k/2}$s each with a long path attached, of length $N' = N - 3k/2$, overlapping in a similar way, see Figure \ref{f:inessential}. 

\begin{figure}[!ht]
\center
\includegraphics[scale=0.15]{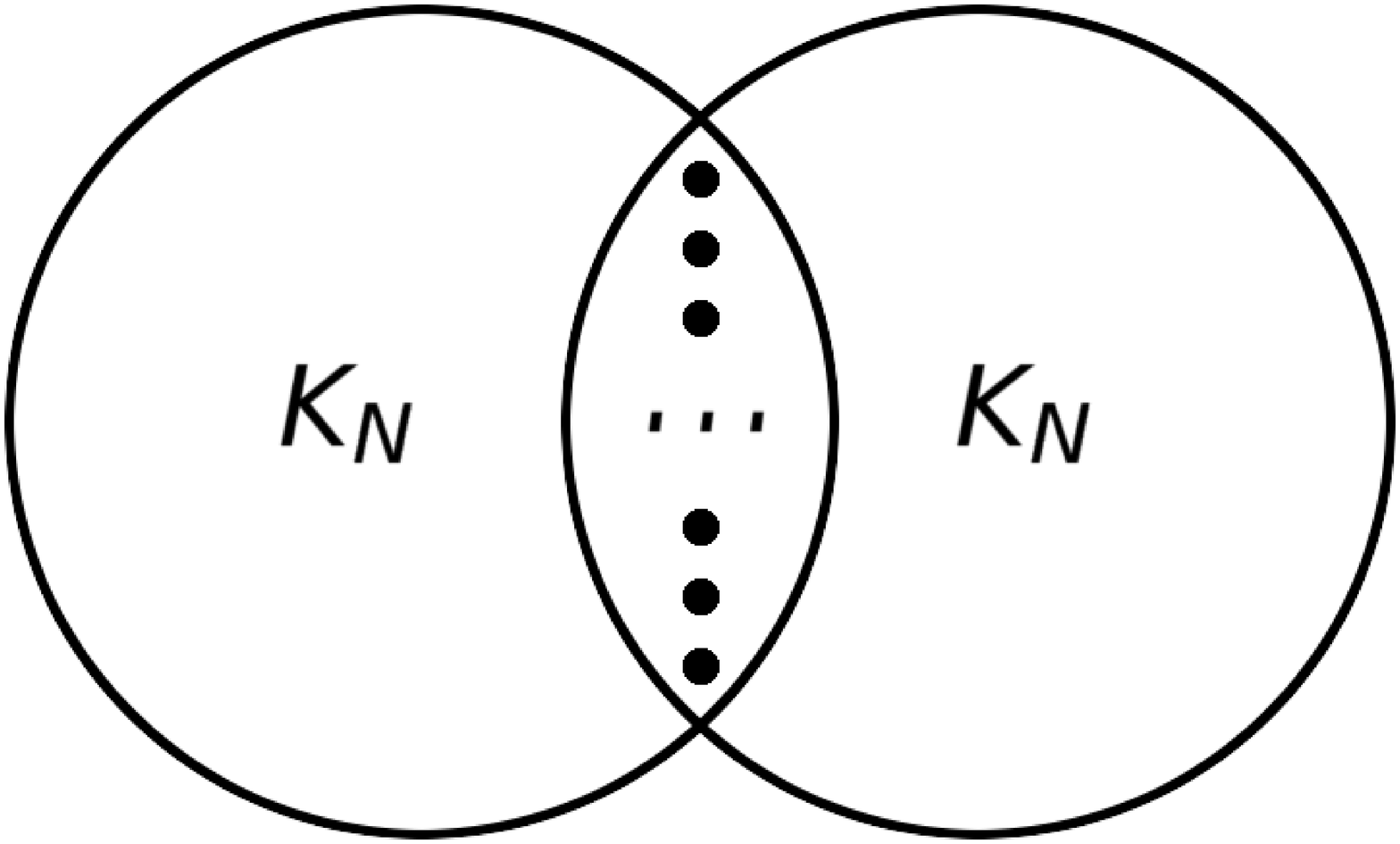}\hspace{1cm}
\includegraphics[scale=0.15]{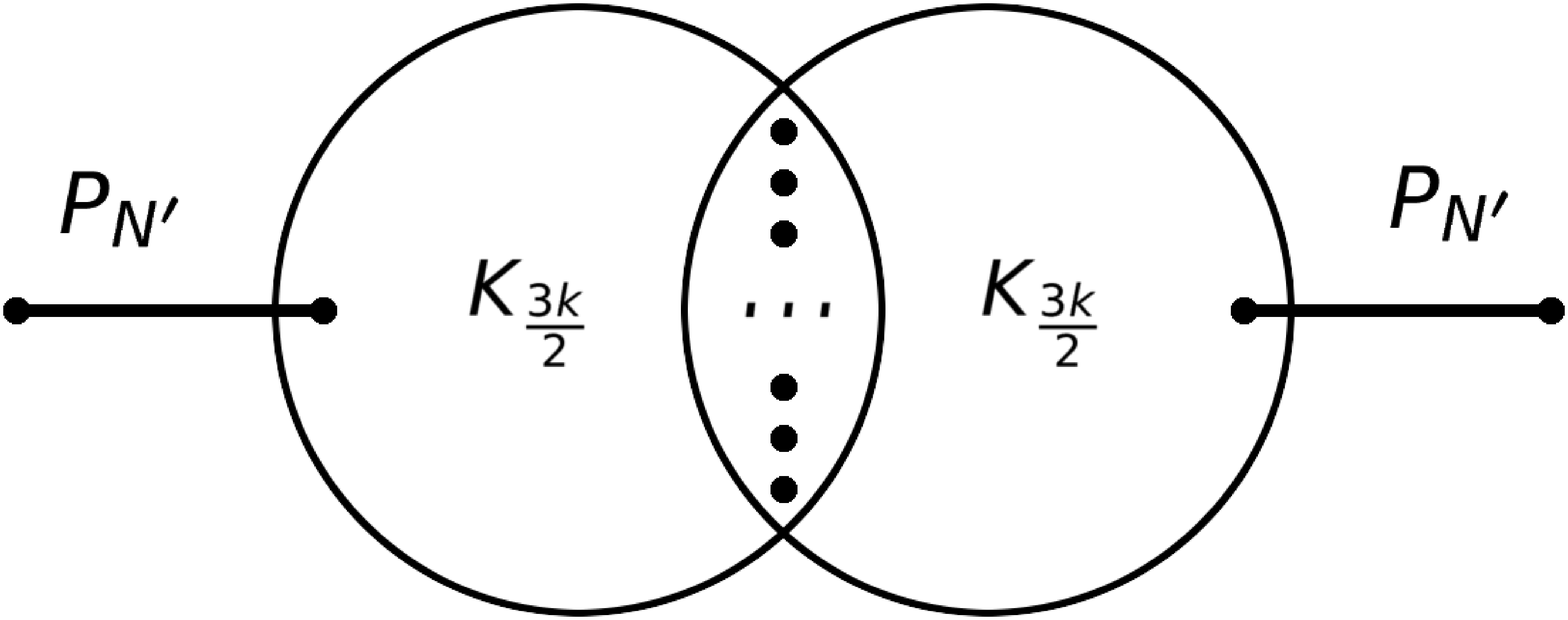}
\caption{Two graphs with the same canonical $k$-tangle-distinguishing tree-decomposition.}\label{f:inessential}
\end{figure}

Since the tangle-distinguishing tree-decompositions of Carmesin et al. only use essential separations, that is separations which distinguish some pair of $k$-tangles, they will construct the same tree-decomposition for both of these graphs, with just two parts of size $N$. However in the second example a more sensible tree-decomposition would further split up the long paths. This could be done in a way to maintain the property that the inessential parts have small branch-width, and by separating these inessential parts from the essential part we have more precisely exhibited the structure of the graph. We will also apply our methods to the problem of further refining the essential parts of these tree-decompositions.

In Section \ref{s:back} we introduce the background material necessary for our proof and in Section \ref{s:ref} we prove our central lemma and deduce the main results in the paper. In Section \ref{s:further} we discuss how our methods can also be used to further refine the essential parts of a tree-decomposition.

\section{Background material}\label{s:back}
\subsection{Separation systems and tree-decompositions}
A \emph{separation} of a graph $G$ is a set $\{A,B\}$ of subsets of $V(G)$ such that $A \cup B = V$ and there is no edge of $G$ between $A \setminus B$ and $B \setminus A$. There are two \emph{oriented separations} associated with a separation, $(A,B)$ and $(B,A)$. Informally we think of $(A,B)$ as \emph{pointing towards} $B$ and \emph{away from} $A$. We can define a partial ordering on the set of oriented separations of $G$ by 
\[
(A,B) \leq (C,D) \text{ if and only if } A \subseteq C \text{ and } B \supseteq D.
\]
The \emph{inverse} of an oriented separation $(A,B)$ is the separation $(B,A)$, and we note that mapping every oriented separation to its inverse is an involution which reverses the partial ordering.

In \cite{DiestelOumDualityIa} Diestel and Oum generalised these properties of separations of graphs and worked in a more abstract setting. They defined a \emph{separation system} $(\ra{S},\leq,*)$ to be a partially ordered set $\ra{S}$ with an order reversing involution, $*$. The elements of $\ra{S}$ are called \emph{oriented separations}. Often a given element of $\ra{S}$ is denoted by $\ra{s}$, in which case its inverse $\ra{s}^*$ will be denoted by $\la{s}$, and vice versa. Since $*$ is ordering reversing we have that, for all $\ra{r},\ra{s} \in S$,
\[
\ra{r} \leq \ra{s} \text{ if and only if } \la{r} \geq \la{s}.
\]
A \emph{separation} is a set of the form $\{\ra{s},\la{s}\}$, and will be denoted by simply $s$. The two elements $\ra{s}$ and $\la{s}$ are the \emph{orientations} of $s$. The set of all such pairs $\{\ra{s},\la{s}\} \subseteq \ra{S}$ will be denoted by $S$.  If $\ra{s}=\la{s}$ we say $s$ is \emph{degenerate}. Conversely, given a set $S' \subseteq S$ of separations we write $\ra{S'} := \bigcup S'$ for the set of all orientations of its elements. With the ordering and involution induced from $\ra{S}$, this will form a separation system. When we refer to a oriented separation in a context where the notation explicitly indicates orientation, such as $\ra{s}$ or $(A,B)$, we will usually suppress the prefix ``oriented" to improve the flow of the paper.

Given a separation of a graph $\{A,B\}$ we can identify it with the pair $\{(A,B),(B,A)\}$ and in this way any set of separations in a graph which is closed under taking inverses forms a separation system. We will work within the framework developed in \cite{DiestelOumDualityIa} since we will need to use directly some results proved in this abstract setting, but also because our results are most easily expressible in this framework. An effort has been made to state the results in the widest generality, so as to be applicable in the broadest sense, however we will always have in mind the motivating example of separation systems which arise as sets of separations in a graph, and so a reader will not lose too much by thinking about these separation systems solely in those terms. 

The \emph{separator} of a separation $\ra{s}=(A,B)$ in a graph is the intersection $A \cap B$ and the \emph{order} of a separation, $|\ra{s}|=$ ord$(A,B)$, is the cardinality of the separator $|A \cap B|$.  Note that if $\ra{r}=(A,B)$ and $\ra{s}=(C,D)$ are separations then so are the \emph{corner separations} $\ra{r} \vee \ra{s} := (A \cup C, B \cap D)$ and $\ra{r} \wedge \ra{s} := (A \cap C, , B \cup D)$ and the orders of these separations satisfy the equality 
\[
 |\ra{r} \vee \ra{s}| + |\ra{r} \wedge \ra{s}| = |\ra{r}| + |\ra{s}|.
\]
Hence the order function is a submodular function on the set of separations of a graph, and we note also that it is clearly symmetric.

\begin{figure}[!ht]
\centering
\includegraphics[scale=0.25]{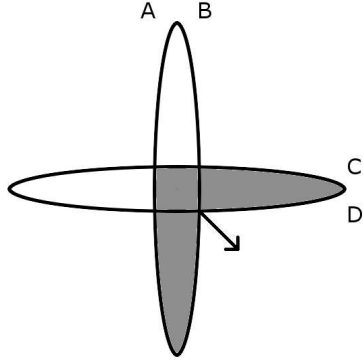}
\caption{Two separations $(A,B)$ and $(C,D)$ with the corner separation $(A \cup C, B \cap D)$ marked.}
\end{figure}

For abstract separations systems, if there exists binary operations $\vee$ and $\wedge$ on $\ra{S}$ such that $\ra{r} \vee \ra{s}$ is the supremum and $\ra{r} \wedge \ra{s}$ is the infimum of $\ra{r}$ and $\ra{s}$ then we call $(\ra{S},\leq,*,\vee,\wedge)$ a \emph{universe} of (oriented) separations, and we call any real, non-negative, symmetric and submodular function on a universe an \emph{order function}. 

Two separations $r$ and $s$ are \emph{nested} if they have $\leq$-comparable orientations. Two oriented separations $\ra{r}$ and $\ra{s}$ are \emph{nested} if $r$ and $s$ are nested \footnote{In general we will use terms defined for separations informally for oriented separations when the meaning is clear, and vice versa}. If $\ra{r}$ and $\ra{s}$ are not nested we say that the two separations \emph{cross}. A set of separations $S$ is \emph{nested} if every pair of separations in $S$ is nested, and a separation $s$ is \emph{nested} with a set of separations $S$ if $S \cup \{s\}$ is nested. 

A separation $\ra{r} \in \ra{S}$ is \emph{trivial in $\ra{S}$}, and $\la{r}$ is \emph{co-trivial}, if there exist an $s \in S$ such that $\ra{r} < \ra{s}$ and $\ra{r} < \la{s}$.  Note that if $\ra{r}$ is trivial, witnessed by some $s$, then, since the involution is order reversing, we have that $\ra{r} < \ra{s} < \la{r}$. So, in particular, $\la{r}$ cannot also be trivial. Separations $\ra{s}$ such that $\ra{s} \leq \la{s}$, trivial or not, will be called \emph{small} and their inverses \emph{co-small}.

In the case of separations of a graph, it is a simple check that the small separations are precisely those of the form $(A,V)$. Furthermore the trivial separations can be characterised as those of the form $(A,V)$ such that $A \subseteq C \cap D$ for some separation $(C,D)$ such that $\{C,D\} \neq \{A,B\}$. Finally we note that there is only one degenerate separation in a graph, $(V,V)$.

A \emph{tree-decomposition} of a graph $G$ is a pair $(T,\cc{V})$ consisting of a tree $T$ and family $\cc{V} = (V_t)_{t \in T}$ of vertex sets $V_t \subseteq V(G)$, one for each vertex $t \in T$ such that:
\begin{itemize}
\item $V(G) = \bigcup_{t \in T} V_t $;
\item for every edge $e \in G$ there exists some $t \in T$ such that $e \in G[V_t]$;
\item $V_{t_1} \cap V_{t_2} \subseteq V_{t_3}$  whenever $t_3$ lies on the $t_1-t_2$ path in $T$.
\end{itemize}

The sets $V_t$ in a tree-decomposition are its \emph{parts} and the sets $V_t \cap V_{t'}$ such that $(t,t')$ is an edge of $T$ are the \emph{adhesion sets}. The \emph{torso} of a part $\overline{V_t}$ is the union of that part together with the completion of the adhesion sets adjacent to that part, that is
\[
\overline{V_t} = G|_{V_t} \cup \bigcup_{(t,t') \in T} K_{V_t \cap V_{t'}}.
\]
The \emph{width} of a tree-decomposition is $\max \{ |V_t| - 1 \, : \, \text{ such that } t \in T\}$, and the \emph{adhesion} is the size of the largest adhesion set. Deleting an oriented edge $e=(t_1,t_2) \in \ra{E}(T)$ divides $T-e$ into two components $T_1 \ni t_1$ and $T_2 \ni t_2$. Then $(\bigcup_{t \in T_1} V_t, \bigcup_{t \in T_2} V_t)$ can be seen to be a separation of $G$ with separator $V_{t_1} \cap V_{t_2}$. We say that the edge $e$ \emph{induces} this separation. Given a tree-decomposition $(T,\cc{V})$ it is easy to check that the set of separations induced by the edges of $T$ form a nested separation system. Conversely it was shown in \cite{confing} that every nested separation system is induced by some tree-decomposition, and so in a sense these two concepts can be thought of as equivalent. 

We say that a nested set of separations $\cc{N'}$ \emph{refines} a nested set of separations $\cc{N}$ if $\cc{N'} \supseteq \cc{N}$, and similarly a tree-decomposition $(T',\cc{V}')$ \emph{refines} a tree-decomposition $(T,\cc{V})$ if the set of separations induced by the edges of $T'$ refines the corresponding set of separations for $T$.

\subsection{Duality of tree-decompositions}\label{s:dual}
There are a number of theorems that assert a duality between certain structurally `large' objects in a graph and an overall tree structure. For example a graph has small tree-width if and only if it contains no large order bramble \cite{ST1993GraphSearching}. In \cite{DiestelOumDualityIa} a general theory of duality, in terms of separation systems, was developed which implied many of the existing theorems. Following on from the notion of tangles in graph minor theory \cite{GMX} these large objects were described as orientations of separations systems avoiding certain forbidden subsets. 

An \emph{orientation} of a set of separations $S$ is a subset $O \subseteq \ra{S}$ which for each $s \in S$ contains exactly one of its orientations $\ra{s}$ or $\la{s}$. A \emph{partial orientation} of $S$ is an orientation of some subset of $S$, and we say that an orientation $O$ \emph{extends} a partial orientation $P$ if $P \subseteq O$. 

In our context we will think of an orientation $O$ on some set of graph separations as choosing a side of each separation $s=\{A,B\}$ to designate as large. For example given a graph $G$ and the set $S$ of all separations of the graph $G$, we denote by 
\[
{\ra{S}\!}_k = \{ \ra{s} \in \ra{S} \,:\, |\ra{s}| < k\},
\]
the set of all orientations of order less than $k$. If there is a large clique (of size $\geq k$) in $G$ then for every $s = \{A,B\} \in S_k$ we have that the clique is contained entirely in $A$ or $B$. So this clique defines an orientation of $S_k$ by picking, for each $\{A,B\} \in S_k$ the orientated separation such that the clique is contained in second set in the pair. 

We call an orientation $O$ of a set of separations $S$ \emph{consistent} if whenever we have distinct $r$ and $s$ such that $\ra{r} < \ra{s}$, $O$ does not contain both $\la{r}$ and $\ra{s}$. Note that a consistent orientation must contain all trivial separations $\ra{r}$, since if $\ra{r} < \ra{s}$ and $\ra{r} < \la{s}$ then, whichever orientation of $s$ is contained in $O$ would be inconsistent with $\la{r}$.

Given a set of subsets $\cc{F} \subseteq 2^{\ra{S}}$ we say that an orientation $O$ is \emph{$\cc{F}$-avoiding} if there is no $F \in \cc{F}$ such that $F \subseteq O$. So for example an orientation is consistent if it avoids $\cc{F} = \{ \{\la{r},\ra{s}\} \, : \, r \neq s, \ra{r} < \ra{s} \}$. In general we will define the `large' objects we consider by the collection $\cc{F}$ of subsets they avoid. For example a $k$-tangle in a graph $G$ can easily be seen to be equivalent to an orientation of $S_k$ which avoids the set of triples 
\[
\cc{T}_k = \{\{(A_1,B_1),(A_2,B_2),(A_3,B_3)\} \subseteq {\ra{S}\!}_k \,: \, \bigcup_{i=1}^3 G[A_i] = G\}.
\]
(Where the three separations need not be distinct). That is, a tangle is an orientation such that no three small sides cover the entire graph, it is a simple check that any such orientation must in fact also be consistent. We say that a consistent orientation which avoids a set $\cc{F}$ is an \emph{$\cc{F}$-tangle}.

Given a set $\cc{F} \subseteq 2^{\ra{S}}$, an \emph{$S$-tree over $\cc{F}$} is a pair $(T,\alpha)$, of a tree $T$ with at least one edge and a function $\alpha : \ra{E}(T) \rightarrow \ra{S}$ from the set 
\[
\ra{E}(T) := \{ (x,y) \,: \, \{x,y\} \in E(T) \}
\]
of orientations of it's edges to $\ra{S}$ such that:
\begin{itemize}
\item For each edge $(t_1,t_2) \in \ra{E}(T) $, if $\alpha(t_1,t_2) = \ra{s}$ then $\alpha(t_2,t_1) = \la{s}$;
\item For each vertex $t \in T$, the set $\{\alpha(t',t) \, :\, (t',t) \in \ra{E}(T) \}$ is in $\cc{F}$;
\end{itemize}

For any leaf vertex $w \in T$ which is adjacent to some vertex $u \in T$ we call the separation $\ra{s}=\alpha(w,u)$ a \emph{leaf separation} of $(T,\alpha)$. A particularly interesting class of such trees is when the set $\cc{F}$ is chosen to consist of stars. A set of non-degenerate oriented separations $\sigma$ is called a \emph{star} if $\ra{r} \leq \la{s}$ for all distinct $\ra{r},\ra{s} \in \sigma$. In what follows, if we refer to an $S$-tree without reference to a specific family $\cc{F}$ of stars, it can be assumed to be over the set of \emph{all} stars in $2^{\ra{S}}$. We say that an $S$-tree over $\cc{F}$ is \emph{irredundant} if there is no $t \in T$ with two neighbours, $t'$ and $t''$ such that $\alpha(t,t') = \alpha(t,t'')$. If $(T,\alpha)$ is an irredundant $S$-tree over a set of stars $\cc{F}$, then it is easy to verify that the map $\alpha$ preserves the natural ordering on $\ra{E}(T)$, defined by letting $(s,t) \leq (u,v)$ if the unique path in $T$ between those edges starts at $s$ and ends at $v$ (see [\cite{DiestelOumDualityIa}, Lemma 2.2]).

Given an irredundant $S$-tree $(T,\alpha)$ over a set of stars and an orientation $O$ of $S$, $O$ induces an orientation of the edges of $T$, which will necessarily contain a sink vertex. If the orientation $O$ is consistent then this sink vertex, which we will denote by $t$, will be unique. We say that $O$ is \emph{contained in $t$}. If $S=S_k$ for some graph $G$, we have that $(T,\alpha)$ defines some tree-decomposition $(T,\cc{V})$ of $G$, and we say that $O$ is \emph{contained in the part $V_t$}. So, each $\cc{F}$-tangle of $S$ must live in some vertex of every such $S$-tree, and by definition this vertex give rise to a star of separations in $\cc{F}$. In this way, each of the vertices in an $S$-tree over $\cc{F}$ (and each of the parts in the corresponding tree-decomposition when one exists) is `too small' to contain an $\cc{F}$-tangle.

Suppose we have a separation $\ra{r}$ which is neither trivial nor degenerate. In applications $\ra{r}$ will be a leaf separation in some irredundant $S$-tree over a set $\cc{F}$ of stars. Given some $\ra{s} \geq \ra{r}$, it will be useful to have a procedure to `shift' the $S$-tree $(T,\alpha)$ in which $\ra{r}$ is a leaf separation to a new $S$-tree $(T,\alpha')$ such that $\ra{s}$ is a leaf separation. Let $S_{\geq \ra{r}}$ be the set of separations $x \in S$ that have an orientation $\ra{x} \geq \ra{r}$. Since $\ra{r}$ is a leaf separation in an irredundant $S$-tree over a set of stars we have by the previous comments that the image of $\alpha$ is contained in $\ra{S}_{\geq \ra{r}}$.

Given $x \in S_{\geq \ra{r}} \setminus \{r\}$ we have, since $\ra{r}$ is non-trivial, that only one of the two orientations of $x$, say $\ra{x}$ is such that $\ra{x} \geq \ra{r}$. So, we can define a function $f\downarrow^{\ra{r}}_{\ra{s}}$ on $\ra{S}_{\geq \ra{r}} \setminus \{\la{r}\}$ by\footnote{The exclusion of $\la{r}$ here is for a technical reason, since it could be the case that $\ra{r} < \la{r}$, however we want to insist that $f\downarrow^{\ra{r}}_{\ra{s}}(\la{r})$ is the inverse of $f\downarrow^{\ra{r}}_{\ra{s}}(\ra{r})$ }
\[
f\downarrow^{\ra{r}}_{\ra{s}}(\ra{x}) := \ra{x} \vee \ra{s} \text{ and } f\downarrow^{\ra{r}}_{\ra{s}}(\la{x}) := (\ra{x} \vee \ra{s})^*.
\]

\begin{figure}[!ht]
\centering
\includegraphics[scale=0.2]{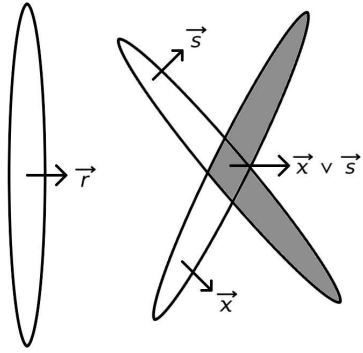}
\caption{Shifting a separation $\ra{x} \geq \ra{r}$ under $f\downarrow^{\ra{r}}_{\ra{s}}$. }
\end{figure}

Given an $S$-tree $(T,\alpha)$ and $\ra{s} \geq \ra{r}$ as above let $\alpha':= f\downarrow^{\ra{r}}_{\ra{s}}\circ\, \alpha$. The \emph{shift of $(T,\alpha)$ onto $\ra{s}$} is the $S$-tree $(T,\alpha')$.

We say that $\ra{s}$ \emph{emulates $\ra{r}$ in $\ra{S}$} if $\ra{r} \leq \ra{s}$ and for every $\ra{t} \in \ra{S}_{\geq \ra{r}} \setminus \{\la{r}\}$, $\ra{s} \vee \ra{t} \in \ra{S}$. Given a particular set of stars $\cc{F} \subseteq 2^{\ra{S}}$ we say further that $\ra{s}$ \emph{emulates $\ra{r}$ in $\ra{S}$ for $\cc{F}$} if $\ra{s}$ emulates $\ra{r}$ in $\ra{S}$ and for any star $\sigma \subset \ra{S}_{\geq \ra{r}} \setminus \{\la{r}\}$ in $\cc{F}$ that contains an element $\ra{t} \geq \ra{r}$ we also have $f\downarrow^{\ra{r}}_{\ra{s}}(\sigma) \in \cc{F}$. The usefulness of this property is exhibited by the following lemma, which is key both in the proof of Theorem \ref{t:dual} from \cite{DiestelOumDualityIa}, and will be essential for the proof of our central lemma.

\begin{lemma}\label{l:link}[\cite{DiestelOumDualityIa}, Lemma 4.2]
Let $(\ra{S},\leq,^*)$ be a separation system, $\cc{F} \subseteq 2^{\ra{S}}$ a set of stars, and let $(T,\alpha)$ be an irredundant $S$-tree over $\cc{F}$. Let $\ra{r}$ be a nontrivial and nondegenerate separation which is a leaf separation of $(T,\alpha)$, and is not the image of any other edge in $T$, and let $\ra{s}$ emulate $\ra{r}$ in $\ra{S}$. Then the shift of $(T,\alpha)$ onto $\ra{s}$ is an $S$-tree over $\cc{F} \cup \{\{\la{s}\}\}$ in which $\ra{s}$ is a leaf separation, associated with a unique leaf.
\end{lemma}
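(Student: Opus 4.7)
The plan is to verify directly that $(T, \alpha')$, with $\alpha' := f\downarrow^{\ra{r}}_{\ra{s}} \circ \alpha$, is an $S$-tree over $\cc{F} \cup \{\{\la{s}\}\}$ and that $\ra{s}$ appears as the leaf separation at the same leaf $w$ that originally carried $\ra{r}$.

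First I would check well-definedness. Because $(T,\alpha)$ is irredundant over a set of stars, $\alpha$ preserves the natural ordering on $\ra{E}(T)$. Since $\ra{r}$ is a leaf separation at $w$ and the image of no other edge, every oriented edge $(t_1,t_2)$ of $T$ is comparable to $(w,u)$ in the natural order, giving either $\alpha(t_1,t_2) \geq \ra{r}$ or $\alpha(t_1,t_2) \leq \la{r}$; in either case the image lies in the domain of $f\downarrow^{\ra{r}}_{\ra{s}}$, and the involution compatibility $\alpha'(t_2,t_1) = \alpha'(t_1,t_2)^*$ is immediate from the two defining cases of $f\downarrow^{\ra{r}}_{\ra{s}}$.

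The bulk of the work is to check the star condition at each vertex $t \in T$. At the leaf $w$, the star $\{\la{r}\}$ maps to $\{\la{s}\}$ via $f\downarrow^{\ra{r}}_{\ra{s}}(\la{r}) = (\ra{r} \vee \ra{s})^* = \la{s}$; this is precisely the extra star permitted in the conclusion. At any other vertex $t$, the star $\sigma_t$ contains exactly one \emph{inward} element $\alpha(t^*,t) \geq \ra{r}$ coming from the neighbour $t^*$ of $t$ on the path to $w$, while all other elements $\alpha(t_i,t)$ are $\leq \la{r}$. I would then check by a direct case analysis that each inequality $\ra{x} \leq \la{y}$ between distinct elements of $\sigma_t$ is transported by $f\downarrow^{\ra{r}}_{\ra{s}}$ to the corresponding inequality between their images, using only the monotonicity of $\vee$ and $\wedge$ in the ambient universe. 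The emulation hypothesis guarantees that every join $\ra{x} \vee \ra{s}$ appearing here lies in $\ra{S}$, so the shifted set is a star in $\ra{S}$; its membership in $\cc{F}$ follows by invoking the $\cc{F}$-respecting form of the emulation hypothesis from the definition preceding the lemma.

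Finally, $\alpha'(w,u) = \ra{r} \vee \ra{s} = \ra{s}$, so $\ra{s}$ is a leaf separation at $w$, and the fact that no other edge of $T$ maps to $\ra{r}$ under $\alpha$, combined with the nontriviality and nondegeneracy of $\ra{r}$, prevents $f\downarrow^{\ra{r}}_{\ra{s}}$ from sending any other edge's image to $\ra{s}$. I expect the main obstacle to be the star-preservation case analysis: it is notation-heavy but mechanical once the inward-versus-outward split of the star at each internal vertex has been identified, with the delicate point being the verification that the relation $\ra{x} \leq \la{y}$ survives the shift in the mixed case where $\ra{x}$ is inward (so $f\downarrow$ takes a join with $\ra{s}$) and $\ra{y}$ is outward (so $f\downarrow$ takes the inverse of a join).
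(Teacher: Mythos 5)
This lemma is not proved in the paper at all --- it is imported verbatim from Diestel and Oum \cite{DiestelOumDualityIa} (their Lemma~4.2) --- so there is no in-paper argument to compare yours with; what you have written is essentially a reconstruction of the original proof, and its core is sound: order-preservation of $\alpha$ for irredundant $S$-trees over stars splits the star at each internal vertex into one element $\geq \ra{r}$ and the rest $\leq \la{r}$, the star inequalities survive the shift by monotonicity of $\vee$ and $\wedge$, and emulation guarantees that every shifted separation lies in $\ra{S}$, while the leaf $w$ now carries $\{\la{s}\}$.

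Two points need more care. First, to place the shifted stars back in $\cc{F}$ (so that the new tree is over $\cc{F} \cup \{\{\la{s}\}\}$ rather than over the family of shifted stars) you invoke the ``$\cc{F}$-respecting'' form of emulation, i.e.\ that $\ra{s}$ emulates $\ra{r}$ in $\ra{S}$ \emph{for} $\cc{F}$. That is strictly stronger than the hypothesis as literally stated here (emulation in $\ra{S}$ only), and it does not follow from it; it is, however, exactly how the lemma is applied later in the paper, where $\cc{F}$ is closed under shifting, so your reading is the intended one --- but you should flag it as an additional hypothesis (or weaken the conclusion), not present it as a consequence. Second, your closing claim that nontriviality of $\ra{r}$ together with ``$\ra{r}$ is not the image of any other edge'' prevents \emph{any} other edge's image from being sent to $\ra{s}$ is too strong: an interior edge oriented away from $w$ whose image $\ra{x}$ satisfies $\ra{r} \leq \ra{x} \leq \ra{s}$ shifts to $\ra{x} \vee \ra{s} = \ra{s}$. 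What the hypotheses actually yield --- and all that ``associated with a unique leaf'' requires --- is that no \emph{other leaf} edge can shift to $\ra{s}$: such an edge has image $\la{x} \leq \la{r}$ with $x \neq r$ (else $\ra{r}$ would be the image of another edge), and $\la{x} \wedge \la{s} = \ra{s}$ would give $\ra{r} \leq \ra{s} \leq \la{x}$ as well as $\ra{r} \leq \ra{x}$, making $\ra{r}$ trivial. With these two corrections your outline matches the argument in \cite{DiestelOumDualityIa}.
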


It is shown in [\cite{DiestelOumDualityIa}, Lemma 2.4] that if we have an $S$-tree over $\cc{F}$, $(T,\alpha)$, and a set of non-trivial and non-degenerate leaf separations, ${\ra{r}\!}_i$, of $(T,\alpha)$ then there also exists an irredundant $S$-tree over $\cc{F}$, $(T',\alpha')$, such that each ${\ra{r}\!}_i$ is a leaf separation of $(T',\alpha)$ and is not the image of any other edge in $T'$.

We say a set $\cc{F} \subseteq 2^{\ra{S}}$ \emph{forces} a separation $\ra{r}$ if $\{ \la{r} \} \in \cc{F}$ or $r$ is degenerate. Note that the non-degenerate forced separations in $\cc{F}$ are precisely those separations which can appear as leaf separations in an $S$-tree over $\cc{F}$. We say $\cc{F}$ is \emph{standard} if it forces every trivial separation in $\ra{S}$. 

We say that a separation system $\ra{S}$ is \emph{separable} if for any two non-trivial and non-degenerate separations $\ra{r},\la{r}' \in \ra{S}$ such that $\ra{r} \leq \ra{r}'$ there exists a separation $s \in S$ such that $\ra{s}$ emulates $\ra{r}$ in $\ra{S}$ and $\la{s}$ emulates $\la{r}'$ in $\ra{S}$. We say that $\ra{S}$ is \emph{$\cc{F}$-separable} if for all non-trivial and non-degenerate $\ra{r},\la{r}' \in \ra{S}$ that are not forced by $\cc{F}$ such that $\ra{r} \leq \ra{r}'$ there exists a separation $s \in S$ such that $\ra{s}$ emulates $\ra{r}$ in $\ra{S}$ for $\cc{F}$ and $\la{s}$ emulates $\la{r}'$ in $\ra{S}$ for $\cc{F}$. Often one proves that $\ra{S}$ is $\cc{F}$-separable in two steps, first by showing it is separable, and then by showing that $\cc{F}$ is \emph{closed under shifting}: that whenever $\ra{s}$ emulates some $\ra{r}$ in $\ra{S}$, it also emulates that $\ra{r}$ in $\ra{S}$ for $\cc{F}$. 

We are now in a position to state the Strong Duality Theorem from \cite{DiestelOumDualityIa}.

\begin{theorem}\label{t:dual}[\cite{DiestelOumDualityIa}, Theorem 4.3]
Let $(\ra{U},\leq,*,\vee,\wedge)$ be a universe of separations containing a separation system $(\ra{S},\leq, *)$. Let $\cc{F} \subseteq 2^{\ra{S}}$ be a standard set of stars. If $\ra{S}$ is $\cc{F}$-separable, exactly one of the following assertions holds:
\begin{itemize}
\item There exists an $S$-tree over $\cc{F}$. 
\item There exists an $\cc{F}$-tangle of $S$.
\end{itemize}
\end{theorem}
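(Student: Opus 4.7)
My plan has two parts, mirroring the two-sided dichotomy.

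\emph{Mutual exclusion.} If $(T,\alpha)$ is an $S$-tree over $\cc{F}$ and $O$ is an $\cc{F}$-tangle of $S$, I would first pass to an irredundant witness using the remark in the excerpt (that any $S$-tree with prescribed non-trivial leaf separations can be refined to an irredundant one). Then the rule ``orient $\{t_1,t_2\}$ toward $t_2$ iff $\alpha(t_1,t_2)\in O$'' defines an orientation of $E(T)$. The consistency of $O$ combined with the order-preservation of $\alpha$ on $\ra{E}(T)$ for irredundant $S$-trees over stars forces this edge-orientation to be acyclic toward a unique sink vertex $t$; the incoming star at $t$ is then a member of $\cc{F}$ that is contained in $O$, contradicting $\cc{F}$-avoidance.

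\emph{Existence of the tree when there is no tangle.} Assume $S$ admits no $\cc{F}$-tangle. I would build an $S$-tree over $\cc{F}$ by an induction whose engine is the shifting Lemma \ref{l:link}. The base case is when every non-degenerate separation of $\ra{S}$ is forced by $\cc{F}$, in which case (using that $\cc{F}$ is standard, hence forces all trivial separations) one assembles an $S$-tree over $\cc{F}$ from singleton stars. For the inductive step, pick a non-trivial, non-degenerate $\ra{r}\in\ra{S}$ that is not forced and some $\ra{r}'\geq\ra{r}$ whose inverse $\la{r}'$ is also not forced; $\cc{F}$-separability delivers an $s\in S$ such that $\ra{s}$ emulates $\ra{r}$ in $\ra{S}$ for $\cc{F}$ and $\la{s}$ emulates $\la{r}'$ in $\ra{S}$ for $\cc{F}$. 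Apply the induction hypothesis on the two ``sides'' $\ra{S}_{\geq\ra{s}}$ and $\ra{S}_{\geq\la{s}}$ (with the appropriately restricted families of stars) to produce sub-trees, then use Lemma \ref{l:link} twice to shift these sub-trees back into $\ra{S}$ and paste them across a central edge labelled by $s$. If the induction ever fails to produce a sub-tree on one side, the absence of a tree there, combined with the absence of an $\cc{F}$-tangle on that side, contradicts the inductive hypothesis, so the recursion terminates with an $S$-tree over $\cc{F}$ as required.

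\emph{Main obstacle.} The delicate point is that Lemma \ref{l:link} on its own only produces an $S$-tree over $\cc{F}\cup\{\{\la{s}\}\}$, so for the pasted tree to lie genuinely over $\cc{F}$ one must verify that the star at every internal vertex survives the shift \emph{inside} $\cc{F}$. This is exactly what the strengthening ``$\ra{s}$ emulates $\ra{r}$ in $\ra{S}$ for $\cc{F}$'' was designed to buy: the shift $f\downarrow^{\ra{r}}_{\ra{s}}$ maps stars of $\cc{F}$ to stars of $\cc{F}$, and $\cc{F}$-separability supplies such an $s$ on both sides simultaneously, so the two sub-trees can be merged without leaving $\cc{F}$. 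A secondary subtlety is the well-foundedness of the recursion: in the finite case one inducts on the number of non-forced separations, while in the infinite case I would reformulate the argument via Zorn's lemma applied to the poset of consistent $\cc{F}$-avoiding partial orientations—observing that a maximal non-total such orientation would force, via the shifting machinery above, a reduction to a strictly smaller sub-system to which the statement already applies, while a maximal total one is by definition an $\cc{F}$-tangle, contradicting our standing hypothesis.
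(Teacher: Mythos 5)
A preliminary remark: the paper does not prove Theorem~\ref{t:dual} at all --- it is quoted from Diestel and Oum --- so your proposal has to be measured against their proof. Your exclusion half is correct and standard: an $\cc{F}$-tangle orients the edges of any $S$-tree over $\cc{F}$, some vertex of the finite tree is a sink, and the star at that sink is a subset of the tangle lying in $\cc{F}$, contradicting $\cc{F}$-avoidance (for this you need neither irredundancy nor uniqueness of the sink).

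The existence half has a genuine gap. You apply the theorem inductively to the one-sided subsystems $\ra{S}_{\geq\ra{s}}$ and $\ra{S}_{\geq\la{s}}$ with restricted star families, and you discharge the tangle horn of the dichotomy there by appealing to ``the absence of an $\cc{F}$-tangle on that side''. But the standing hypothesis is only that $S$ has no $\cc{F}$-tangle, and this does not rule out a tangle of a restricted subsystem: such an orientation says nothing about the separations crossing $s$ or lying beyond it, and there is no canonical way to extend it to a consistent, $\cc{F}$-avoiding orientation of all of $S$ --- producing exactly such a global orientation is what the theorem is for, so this step begs the question. Two further problems: the truncated subsystem with a truncated star family need not satisfy the hypotheses of the theorem (standardness, $\cc{F}$-separability), and Lemma~\ref{l:link} cannot ``shift these sub-trees back into $\ra{S}$'' --- it shifts an $S$-tree over $\cc{F}$ that already has $\ra{r}$ as a leaf separation onto some $\ra{s}\geq\ra{r}$; it does not transport trees over a different family on a subsystem into trees over $\cc{F}$. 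The proof of Diestel and Oum avoids all of this by keeping $S$ fixed and strengthening $\cc{F}$ instead: one inducts on the number of separations of $S$ not forced by $\cc{F}$, applies the induction hypothesis to $\cc{F}\cup\{\{\la{r}\}\}$ and to $\cc{F}\cup\{\{\ra{r}\}\}$ for an unforced, nontrivial, nondegenerate $r$ (a tangle for either family would in particular be an $\cc{F}$-tangle, so both horns return $S$-trees, each either already over $\cc{F}$ or having the corresponding orientation of $r$ as a leaf separation), and then glues the two $S$-trees along the separation $s$ supplied by $\cc{F}$-separability, using Lemma~\ref{l:link} (after first making the relevant leaf unique) so that every shifted star stays in $\cc{F}$. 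Your base case (``assemble an $S$-tree from singleton stars'') and the Zorn's-lemma remark likewise gloss over real work, but the essential missing idea is the one above: induct by enlarging $\cc{F}$, not by restricting $S$.
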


The property of being $\cc{F}$-separable may seem a rather strong condition to hold, however in \cite{DiestelOumDualityIb} it is shown that for all the sets $\cc{F}$ describing classical `large' objects (such as tangles or brambles) the separation systems ${\ra{S}\!}_k$ are $\cc{F}$-separable. More specifically, by definition a $k$-tangle is a consistent orientation which avoids the set $\cc{T}_k$ as defined earlier. In fact it is shown in \cite{DiestelOumDualityIb} that a consistent orientation avoids $\cc{T}_k$ if and only if it avoids the set of stars in $\cc{T}_k$
\[
\cc{T}_k^* = \{ \{(A_i,B_i)\}_1^3 \,:\, \{(A_i,B_i)\}_1^3 \subseteq S_k \text{ is a star and } \bigcup_i G[A_i] = G \}.
\]
Note that $\cc{T}_k^*$ is standard. Indeed it forces all the small separations $(A,V)$, and so it forces the trivial separations. It can also be checked that ${\ra{S}\!}_k$ is $\cc{T}_k^*$-separable.

The dual structure to a $k$-tangle is therefore an $S_k$-tree over $\cc{T}_k^*$. It is shown in \cite{DiestelOumDualityIb} that the existence of such an $S_k$-tree is equivalent to the existence of a branch-decomposition of width $<\!k$ for all $k\geq 3$. We note that the condition that $k \geq 3$ is due to a quirk in how branch-width is traditionally defined, which results in, for example, stars having branch-width $1$ but all other trees having branch-width $2$, whilst both contain $2$-tangles.

If a tree-decomposition $(T,\cc{V})$ of a graph $G$ is such that the set of separations induced by the edges of $T$ is an $S_k$-tree over $\cc{T}_k^*$ for some $k$, then there is some smallest such $k'$, and we say the \emph{branch-width} of the tree-decomposition is $k'-1$. If no such $k$ exists then we will let the branch-width be infinite. By the preceding discussion we have that the branch-width (in the traditional sense) of a graph is the smallest $k$ such that $G$ has a tree-decomposition of branch-width $k$ (except when the branch-width of $G$ is $1$), and so this should not cause too much confusion.

\subsection{Canonical tree-Decompositions distinguishing tangles}
Given two orientations $O_1$ and $O_2$ of a set of separations $S$ we say that a separation $s$ \emph{distinguishes} $O_1$ and $O_2$ if $\ra{s} \in O_1$ and $\la{s} \in O_2$. As in the previous section, every tree-decomposition,  $(T,\cc{V})$, corresponds to some nested set of separations, $\cc{N}$. We say that a tree-decomposition \emph{distinguishes} $O_1$ and $O_2$ if there is some separation in $\cc{N}$ which distinguishes $O_1$ and $O_2$. If $O_1$ and $O_2$ are consistent, then the tree-decomposition will distinguish them if and only if they are contained in different parts of the tree. 

As in Section \ref{s:dual} a $k$-block $b$ can be viewed as an orientation of $S_k$. Indeed given any separation $(A,B)$ with ord$(A,B) < k$, since $b$ is $(<\!k)$-inseparable, $b \subseteq A$ or $b \subseteq B$, so we can think of $b$ as orienting each $s \in S_k$ towards the side of the separations that $b$ lies in. In \cite{confing} Carmesin, Diestel, Hundertmark and Stein showed how to algorithmically construct a nested set of separations in a graph $G$ (and so a tree-decomposition) in a canonical way, that is, invariant with respect to the automorphism group of $G$, which distinguishes all of its $k$-blocks, for a given $k$. 

These ideas were extended in \cite{CDHH13CanonicalAlg} to construct canonical tree-decompositions which distinguish all the $k$-profiles in a graph, a common generalization of $k$-tangles and $k$-blocks. A \emph{$k$-profile} can be defined as a $\cc{P}_k$-tangle of $S_k$, where
\[
\cc{P}_k = \{ \sigma = \{ (A,B),(C,D),(B \cap D, A \cup C) \} \,: \, \sigma \subseteq {\ra{S}\!}_k\}.
\]

More generally, given a universe of separations $(\ra{U},\leq,*,\vee,\wedge)$ with an order function containing a separation system $(\ra{S},\leq, *)$, we can define as before an \emph{$S$-profile} to be a $\cc{P}_S$-tangle of $S$ where
\[
\cc{P}_S = \{ \sigma = \{ \ra{r},\ra{s}, \la{r} \wedge \la{s} \} \,: \, \sigma \subseteq \ra{S}\}.
\]
Given two distinct $S$-profiles $P_1$ and $P_2$ there is some $s \in S$ which distinguishes them. Furthermore, there is some minimal $l$ such that there is a separation of order $l$ which distinguishes $P_1$ and $P_2$, and we define $\kappa(P_1,P_2) :=l$. We say that a separation $s$ distinguishes $P_1$ and $P_2$ \emph{efficiently} if $s$ distinguishes $P_1$ and $P_2$ and $|s| = \kappa(P_1,P_2)$. Given a set of profiles $\phi$ we say that a separation $s$ is \emph{$\phi$-essential} if it efficiently distinguishes some pair of profiles in $\phi$. We will often consider in particular, as in the case of graphs, the separation system arising from those separations in a universe of order $<\!k$, that is we define
\[
{\ra{S}\!}_k = \{ \ra{u} \in \ra{U} \,: \, |\ra{u}| < k \},
\]
where in general it should be clear from the context which universe $S_k$ lives in.

In \cite{CDHH13CanonicalAlg} a number of different algorithms, which they call \emph{$k$-strategies}, are described for constructing a nested set of separations distinguishing a set of profiles. These algorithms build the set of separations in a series of steps, and at each step there is a number of options for how to pick the next set of separations. A $k$-strategy is then a description of which choice to make at each step. The authors showed that, regardless of which choices are made at each step, this algorithm will produce a nested set of separations distinguishing all the profiles in $G$. We say a set of profiles is \emph{canonical} if it is fixed under every automorphism of $G$. In particular the following is shown.

\begin{theorem}\label{t:dist}[\cite{CDHH13CanonicalAlg} Theorem 4.4]
Every $k$-strategy $\Sigma$ determines for every canonical set $\phi$ of $k$-profiles of a graph $G$ a canonical nested set $\cc{N}_{\Sigma}(G,\phi)$ of $\phi$-essential separations of order $<\!k$ that distinguishes all the profiles in $\phi$ efficiently.
\end{theorem}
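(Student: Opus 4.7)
The plan is to construct $\cc{N}_\Sigma(G,\phi)$ by induction on the order $\ell = 1, 2, \ldots, k-1$, maintaining the invariant that after stage $\ell$ we have a canonical nested set $\cc{N}_\ell$ of $\phi$-essential separations of order $\leq \ell$ which distinguishes exactly those pairs $P_1, P_2 \in \phi$ with $\kappa(P_1, P_2) \leq \ell$. At stage $\ell$ consider the set $\cc{E}_\ell$ of all separations of order exactly $\ell$ that efficiently distinguish some pair of profiles in $\phi$ still undistinguished by $\cc{N}_{\ell-1}$. Since $\phi$, the partial order on separations, and the order function are all $\operatorname{Aut}(G)$-equivariant, $\cc{E}_\ell$ is itself a canonical set. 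The $k$-strategy $\Sigma$ then prescribes a canonical rule for extracting from $\cc{E}_\ell$ a nested subfamily $\cc{M}_\ell$ that still distinguishes every pair at level $\ell$, and we set $\cc{N}_\ell := \cc{N}_{\ell-1} \cup \cc{M}_\ell$.

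The combinatorial engine behind this extraction is the standard submodular uncrossing argument. If $\ra{r},\ra{s} \in \cc{E}_\ell$ cross, orient them towards profiles $P_r$ and $P_s$ that they respectively distinguish from partners. By submodularity $|\ra{r} \vee \ra{s}| + |\ra{r} \wedge \ra{s}| \leq 2\ell$, so at least one corner separation has order $\leq \ell$. The profile axiom encoded in $\cc{P}_k$ says that a profile containing both $\ra{r}$ and $\ra{s}$ cannot contain $\la{r} \wedge \la{s}$; equivalently, corner separations remain coherently oriented with respect to the relevant profiles, so they still efficiently distinguish the same pairs with order exactly $\ell$ (no smaller order is possible since $\kappa = \ell$). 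Iterating this replacement canonically across crossing pairs produces the required nested subfamily $\cc{M}_\ell$. Nestedness of $\cc{M}_\ell$ against the previously built $\cc{N}_{\ell-1}$ is automatic: if some $\ra{r} \in \cc{M}_\ell$ crossed a separation $\ra{s} \in \cc{N}_{\ell-1}$ of order $< \ell$, the same submodularity and profile argument would produce a separation of order strictly less than $\ell$ distinguishing the relevant pair, contradicting $\kappa(P_1,P_2) = \ell$.

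The main obstacle is canonicity: at each stage one needs a rule which is simultaneously invariant under the entire group $\operatorname{Aut}(G)$, not merely under those automorphisms fixing the partial construction, and naively choosing any nested uncrossing of $\cc{E}_\ell$ breaks this symmetry. The resolution is to define $\Sigma$ so that $\cc{M}_\ell$ depends only on the canonical data $(G, \cc{N}_{\ell-1}, \cc{E}_\ell)$. One way to realise this is via a fixed-point argument: the collection of nested subfamilies of $\cc{E}_\ell$ that still distinguish every unresolved pair, ordered by inclusion, is closed under the canonical uncrossing operations $\vee$ and $\wedge$, so there is a well-defined extremal element in any canonically specified direction, and this element must be fixed by every automorphism of $G$. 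Different $k$-strategies correspond to different canonical choices of direction within this lattice. Efficiency and $\phi$-essentiality of every separation used follow directly from the construction, since at stage $\ell$ we only ever add separations of order $\ell$ that efficiently distinguish pairs of profiles with $\kappa = \ell$; taking $\cc{N}_\Sigma(G,\phi) := \cc{N}_{k-1}$ then yields the required canonical nested set.
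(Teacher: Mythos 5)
This theorem is not proved in the present paper at all: it is imported verbatim from Carmesin, Diestel, Hamann and Hundertmark \cite{CDHH13CanonicalAlg} (their Theorem 4.4), so the only fair comparison is with the proof given there. Your sketch reproduces the right skeleton --- induction on the order $\ell$, $\mathrm{Aut}(G)$-equivariance of the candidate set $\cc{E}_\ell$, uncrossing via submodularity controlled by the profile axiom --- but it has a genuine gap exactly where the theorem is hard, namely canonicity of the nested subfamily $\cc{M}_\ell$. ``Iterating this replacement canonically across crossing pairs'' is not a construction: the outcome of pairwise uncrossing depends on the order in which crossing pairs are processed and on which corner is kept, so it is neither well defined nor automorphism-invariant, and you do not show the iteration terminates in a family that is nested and still distinguishes every unresolved pair. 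The subsequent ``fixed-point'' paragraph asserts precisely what has to be proved: that the collection of nested distinguishing subfamilies of $\cc{E}_\ell$ is closed under the corner operations and possesses a canonically specified extremal element. No argument is offered for either claim, and this is the entire content of the cited theorem rather than a routine verification.

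The local uncrossing step is also too quick. If $r$ efficiently distinguishes $P_1,P_2$ while $s$ efficiently distinguishes a different pair and the two cross, submodularity hands you only one low-order corner, and that corner need not distinguish the pair you care about: you must take the corner compatible with how \emph{both} profiles orient \emph{both} separations, and when the low-order corner is the wrong one the naive replacement fails; moreover replacing $r$ may re-introduce a crossing with a separation you fixed earlier. The proof in \cite{CDHH13CanonicalAlg} avoids arbitrary choices altogether: for each unresolved pair it considers the canonical set of all minimal-order separations distinguishing that pair and shows, using the profile property together with submodularity, that suitably extremal members of these sets are automatically nested with every $\phi$-essential separation of order at most $\ell$; the strategy $\Sigma$ then selects one such canonical extremal separation per pair, and nestedness and invariance hold by construction rather than by an uncrossing iteration. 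That is the step your proposal replaces with an unproven lattice/fixed-point principle, so as it stands it does not constitute a proof of the statement.
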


Note that any $k$-tangle, $O$, is also a $k$-profile. Indeed, it is a simple check that $O$ is consistent. Also for any pair of separations $(A,B),(C,D) \in {\ra{S}\!}_k$ we have that $G[A] \cup G[C] \cup G[B \cap D] = G$, since any edge not contained in $A$ or $C$ is contained in both $B$ and $D$. Hence, $\{(A,B),(C,D),(B \cap D, A \cup C)\} \in \cc{T}_k$, and so $\cc{P}_k \subseteq \cc{T}_k$. Therefore any $k$-tangle, which by definition avoids $\cc{T}_k$, must also avoid $\cc{P}_k$, and so must be a $k$-profile. Similarly one can show that the orientations defined by $k$-blocks are consistent and $\cc{P}_k$ avoiding, and so $k$-profiles. Even more, there is some family $\cc{B}_k \supseteq \cc{P}_k$ such that the orientations defined by $k$-blocks are $\cc{B}_k$-tangles, and if there is a $\cc{B}_k$-tangle of $S_k$ then the graph $G$ contains a unique $k$-block corresponding to this orientation.

One of the aims of \cite{DiestelOumDualityIa,DiestelOumDualityIb} had been to develop a duality theorem which would be applicable to $k$-profiles and $k$-blocks. The same authors showed in \cite{DiestelOumDualityII} that there is a more general duality theorem of a similar kind which applies in these cases, however the dual objects in this theorem correspond to a more general object than the classical notion of tree-decompositions.

Nevertheless, it was posed as an open question whether or not there was a duality theorem for $k$-profiles or $k$-blocks expressible within the framework of \cite{DiestelOumDualityIa}. By Theorem \ref{t:dual} it would be sufficient to show that there is a standard set of stars $\cc{F}$ such that the set of $k$-profiles or $k$-blocks coincides with the set of $\cc{F}$-tangles. Recently Diestel, Eberenz and Erde \cite{Block} showed that, if we insist the orientations satisfy a slightly stronger consistency condition, this will be the case. We say that an orientation $O$ of a separation system $S$ is \emph{regular} if whenever we have $r$ and $s$ such that $\ra{r} \leq \ra{s}$, $O$ does not contain both $\la{r}$ and $\ra{s}$. We note that a consistent orientation is regular if and only if it contains every small separation. A \emph{regular $\cc{F}$-tangle of $S$} is then a regular $\cc{F}$-avoiding orientation of $S$, and a \emph{regular $S$-profile} is a regular $\cc{P}_S$-tangle. For most natural examples of separation systems there will not be a difference between regular and irregular profiles. Indeed, in \cite{Block} it is shown that for $k \geq 3$ every $k$-profile of a graph is in fact a regular $k$-profile\footnote{There do exist pathological examples of $2$-profiles in graphs which are not regular, however they can be easily characterized.}. 

We say a separation system is \emph{submodular} if whenever $\ra{r},\ra{s} \in \ra{S}$ either $\ra{r} \wedge \ra{s}$ or $\ra{r} \vee \ra{s} \in \ra{S}$. Note that, if a universe $U$ has an order function, then the separation systems $S_k$ are submodular.

\begin{theorem}\label{t:genprof}[Diestel, Eberenz and Erde \cite{Block}]
Let $S$ be a separable submodular separation system contained in some universe of separations $(\ra{U},\leq,*,\vee,\wedge)$, and let $\cc{F} \supseteq \cc{P}_S$. Then there exists a standard set of stars $\cc{F}^*$ (which is closed under shifting, and contains $\{\ra{r}\}$ for every co-small $\ra{r}$) such that every regular $\cc{F}$-tangle of $S$ is an $\cc{F}^*$-tangle of $S$, and vice versa, and such that the following are equivalent:
\begin{itemize}
\item There is no regular $\cc{F}$-tangle of $S$;
\item There is no  $\cc{F}^*$-tangle of $S$;
\item There is an $S$-tree over $\cc{F}^*$.
\end{itemize}
\end{theorem}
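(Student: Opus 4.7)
I plan to prove Theorem \ref{t:genprof} by manufacturing $\cc{F}^*$ from $\cc{F}$ in a shape compatible with Theorem \ref{t:dual}, and then invoking that strong duality theorem. The construction has three parts. For each $\sigma\in\cc{F}$ that is not already a star, I uncross it into a star $\sigma^*\subseteq\ra{S}$: whenever $\ra{r},\ra{s}\in\sigma$ cross, submodularity of $\ra{S}$ implies that one of the corners $\ra{r}\wedge\la{s}$ or $\la{r}\wedge\ra{s}$ lies in $\ra{S}$, and I replace the corresponding element of $\sigma$ by this corner, producing a nested pair; iterating terminates in a star. Second, I place the singleton $\{\ra{r}\}$ in $\cc{F}^*$ for every co-small $\ra{r}$, which forces any $\cc{F}^*$-tangle to contain every small separation, hence to be regular. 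Third, I add $\{\ra{r}\}$ for every non-degenerate trivial $\ra{r}$, making $\cc{F}^*$ standard.

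The central step is to show that the $\cc{F}^*$-tangles and regular $\cc{F}$-tangles of $S$ coincide. The direction from regular $\cc{F}$-tangle to $\cc{F}^*$-tangle is the substantive half. Suppose $O$ is regular and avoids $\cc{F}$; I need that $O$ avoids each starred $\sigma^*$. I induct on the number of uncrossing steps used to build $\sigma^*$ from $\sigma$: the inductive step relies on the profile-closure property that, because $\cc{F}\supseteq\cc{P}_S$ and $O$ is consistent, whenever $\ra{r},\ra{s}\in O$ and $\la{r}\wedge\la{s}\in\ra{S}$ we must have $\ra{r}\vee\ra{s}\in O$, and dually $\ra{r}\wedge\ra{s}\in O$ whenever that corner lies in $\ra{S}$. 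This allows me to lift membership from the corner separation back to the original element of $\sigma$, contradicting $\sigma\not\subseteq O$. The reverse direction is easier: the co-small singletons in $\cc{F}^*$ immediately give regularity, and the uncrossed stars are designed so that $\sigma^*\subseteq O$ forces $\sigma\subseteq O$, so an $\cc{F}^*$-tangle avoids every $\sigma\in\cc{F}$.

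Next I would verify the structural properties of $\cc{F}^*$ demanded by Theorem \ref{t:dual}. Standardness is built into the construction. Closure under shifting follows by checking that the shift $f\downarrow^{\ra{r}}_{\ra{s}}(\sigma^*)$ of a star is again a star and that it still witnesses an uncrossing of an element of $\cc{F}$, using separability of $S$; together this yields $\cc{F}^*$-separability. Theorem \ref{t:dual} then produces the dichotomy between an $\cc{F}^*$-tangle of $S$ and an $S$-tree over $\cc{F}^*$, which combined with the tangle correspondence above gives the required three-way equivalence.

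The hard part will be the inductive lifting in the correspondence between $\cc{F}$ and $\cc{F}^*$: each uncrossing step replaces an element of $\sigma$ by a smaller corner, so without the profile-closure hypothesis $\cc{F}\supseteq\cc{P}_S$ one cannot recover membership of the original element of $\sigma$ from membership of the corner in $O$. Carrying this out in the abstract setting, where the universe $(\ra{U},\leq,*,\vee,\wedge)$ is only a lattice with an order function and distributivity is not available, is where the argument has to be done carefully, and is also where one must arrange the uncrossing choices so that the resulting $\cc{F}^*$ is closed under shifting rather than merely well-defined.
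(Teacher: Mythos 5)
This theorem is not proved in the paper at all: it is imported from the cited work of Diestel, Eberenz and Erde \cite{Block}, so there is no in-paper argument to compare yours with, and your proposal has to be judged on its own terms. Judged so, it has a genuine gap at its central step. You build $\cc{F}^*$ by uncrossing each $\sigma\in\cc{F}$ into a star of corner separations and then claim that a regular $\cc{F}$-tangle $O$ avoids these stars because the profile property lets you ``lift membership from the corner separation back to the original element of $\sigma$''. It does not. If $\ra{r}\wedge\la{s}\in O$ and $\ra{s}\in O$, then consistency only gives you separations \emph{below} those of $O$, and the profile property only gives you joins such as $(\ra{r}\wedge\la{s})\vee\ra{s}$, which is in general not $\ra{r}$: in the graph universe, with $\ra{r}=(A,B)$ and $\ra{s}=(C,D)$, this join is $(C\cup(A\cap D),\,D\cap(B\cup C))$, not $(A,B)$. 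There is no way back up from the corners to the original members of $\sigma$, and the implication you need is in fact false for an uncrossing-based $\cc{F}^*$: already for $\cc{F}=\cc{P}_k$ a regular profile (for instance one induced by a $k$-block meeting both separators $A\cap B$ and $C\cap D$) can contain the uncrossed star $\{\ra{r}\wedge\la{s},\,\ra{s},\,\la{r}\wedge\la{s}\}$ of a $\cc{P}_k$-triple while of course avoiding $\cc{P}_k$ itself. Only in the tangle case, where $\cc{T}_k$ is itself closed under taking these corners, does uncrossing stay inside $\cc{F}$, which is why $\cc{T}_k^*$ can simply be taken to be the stars in $\cc{T}_k$.

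Note also that you have the two directions the wrong way round. Your ``easy direction'' asserts that $\sigma^*\subseteq O$ forces $\sigma\subseteq O$; it cannot, since the starred elements are smaller than the originals. What downward consistency (with regularity and submodularity) actually gives is the opposite contrapositive: a consistent regular orientation containing some $\sigma\in\cc{F}$ also contains one of its uncrossings, so uncrossed stars yield ``$\cc{F}^*$-tangle $\Rightarrow$ regular $\cc{F}$-tangle'', while the direction ``regular $\cc{F}$-tangle $\Rightarrow$ $\cc{F}^*$-tangle'' is precisely where your choice of $\cc{F}^*$ breaks down; this is why the cited proof must define $\cc{F}^*$ differently rather than by uncrossing the members of $\cc{F}$. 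Separately, your closure-under-shifting step is not a routine check either: the image of an uncrossed star under $f\downarrow^{\ra{r}}_{\ra{s}}$ consists of joins with $\ra{s}$ and need not be an uncrossing of any member of $\cc{F}$, so that verification would require a substantively different argument. The parts of your plan that do work are the minor ones: adding the co-small and trivial singletons to force regularity and standardness, and the termination of the uncrossing procedure itself.
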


In the case where $S=S_k$ is the set of separations of a graph with $k\geq 3$, we have that $\cc{F} \supseteq \cc{P}_k$,  and so every $\cc{F}$-tangle is a $k$-profile, and so regular. Hence, in this case, we can omit the word regular from the statement of the theorem. We note that $\cc{T}_k \supseteq \cc{P}_k$, (and in fact the $\cc{T}_k^*$ of the theorem can be taken to be the $\cc{T}_k^*$ defined earlier) and so Theorem \ref{t:genprof} also implies the tangle/branch-width duality theorem. 

Applying the result to $\cc{P}_k$ or $\cc{B}_k$ also gives a duality theorem for $k$-blocks and $k$-profiles. As in the case of tangles, if a tree-decomposition $(T,\cc{V})$ of a graph $G$ is such that the set of separations induced by the edges of $T$ is an $S_k$-tree over $\cc{P}_k^*$ for some $k$, then there is some smallest such $k'$, and we say the \emph{profile-width} of the tree-decomposition is $k'-1$. If no such $k$ exists then we will let the profile-width be infinite. The \emph{profile-width} of a graph is then the smallest $k$ such that $G$ has a tree-decompositions of profile-width $k$. Then, as was the case with tangles, Theorem \ref{t:genprof} tells us that the profile-width of a graph is the largest $k$ such that $G$ contains a $k$-profile. We define the \emph{block-width} of a tree-decomposition and graph in the same way.

In a similar way as before, we can think of any part in a tree-decomposition of block-width at most $k-1$ as being `too small' to contain a $k$-block, as the corresponding star of separations must lie in $\cc{B}_k^*$, and by Theorem \ref{t:genprof} every $k$-block defines an orientation of $S_k$ which avoids $\cc{B}_k^*$.

\section{Refining a tree-decomposition}\label{s:ref}
Given a set of profiles of a graph, $\phi$, we say a part $V_t$ of a tree-decomposition is \emph{$\phi$-essential} if some profile from $\phi$ is contained in this part. We will keep in mind as a motivating example the case $\phi=\tau_k$, the set of $k$-tangles and, when the set of profiles considered is clear, we will refer to such parts simply as \emph{essential}. Conversely if no such profile is contained in the part we call it \emph{inessential}.  The main result of the paper can now be stated formally.

\begin{lemma}\label{l:main}
Let $(\ra{U},\leq,*,\vee,\wedge)$ be a universe of separations with an order function. Let $\phi$ be a set of $S_k$-profiles and let $\cc{F}$ be a standard set of stars which contains $\{\ra{r}\}$ for every co-small $\ra{r}$, and which is closed under shifting, such that $\phi$ is the set of $\cc{F}$-tangles.  Let $\sigma = \{ {\ra{s}\!}_i \, : \, i \in [n] \} \subseteq {\ra{S}\!}_k$ be a non-empty star of separations such that each $s_i$ is $\phi$-essential, and let $\cc{F}' = \cc{F} \cup \bigcup_1^n \{\la{s_i}\}$

Then either there is an  $\cc{F}'$-tangle of $S_k$, or there is an $S_k$-tree over $\cc{F}'$ in which each ${\ra{s}\!}_i$ appears as a leaf separation.
\end{lemma}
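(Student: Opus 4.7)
The plan is to derive Lemma \ref{l:main} from the Strong Duality Theorem (Theorem \ref{t:dual}) applied to the augmented family $\cc{F}'$, followed by a shifting argument based on Lemma \ref{l:link} to force each $\ra{s_i}$ to appear as a leaf separation.

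First I would verify the hypotheses of Theorem \ref{t:dual} for $\cc{F}'$. Standardness is immediate, since $\cc{F}$ is standard and adjoining the singletons $\{\la{s_i}\}$ does not affect the forcing of trivial separations. The substantive step is to show that $S_k$ is $\cc{F}'$-separable. Since $\cc{F}$-separability and closure of $\cc{F}$ under shifting are already available, the only genuinely new point is how the singletons $\{\la{s_i}\}$ behave under shifts: if $\ra{s}$ emulates some $\ra{r}$ in $S_k$ and $\la{s_i} \geq \ra{r}$, then we need $\{\la{s_i} \vee \ra{s}\} \in \cc{F}'$. Here I would exploit the $\phi$-essentiality of $s_i$, the star condition on $\sigma$, and submodularity of the order function: by choosing the emulating separation $\ra{s}$ of minimum order among those with $\ra{r} \leq \ra{s} \leq \ra{r'}$, one should be able to arrange $\ra{s} \leq \la{s_i}$, so that $\la{s_i} \vee \ra{s} = \la{s_i}$ and the shifted singleton is simply $\{\la{s_i}\} \in \cc{F}'$.

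Once $\cc{F}'$-separability is established, Theorem \ref{t:dual} yields the dichotomy: either an $\cc{F}'$-tangle of $S_k$ exists, giving the first case of the conclusion, or there is an $S_k$-tree over $\cc{F}'$. In the latter case I would apply Lemma \ref{l:link} one separation at a time to turn each $\ra{s_i}$ into a leaf separation. For each $i$, locate a leaf separation $\ra{r}$ of the current tree with $\ra{r} \leq \ra{s_i}$; verify using submodularity and the $\cc{F}'$-closure-under-shifting established above that $\ra{s_i}$ emulates $\ra{r}$ in $S_k$ for $\cc{F}'$; and apply Lemma \ref{l:link} to pass to a new $S_k$-tree over $\cc{F}' \cup \{\{\la{s_i}\}\} = \cc{F}'$ in which $\ra{s_i}$ is a leaf separation with a unique associated leaf. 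The star condition $\ra{s_i} \leq \la{s_j}$ for $i \neq j$ guarantees that the $\ra{s_j}$ already installed as leaves at earlier stages are not displaced, since the subsequent shifts only affect separations $\geq \ra{r}$ with $\ra{r} \leq \ra{s_i}$, and these are disjoint from the branches containing the $\ra{s_j}$.

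The main obstacle is controlling the shifts of the new singletons $\{\la{s_i}\}$ throughout the argument — both in the $\cc{F}'$-separability step and in the subsequent iterative use of Lemma \ref{l:link}. This hinges on a careful interplay between submodularity of the order function, the star structure of $\sigma$, and the $\phi$-essentiality of each $s_i$, which together should allow one to select emulating separations that respect the extra constraints imposed by the $\{\la{s_i}\}$ while still inheriting the desired shifting behaviour from $\cc{F}$.
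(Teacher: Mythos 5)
There is a genuine gap at the central step of your plan: the claim that $S_k$ is $\cc{F}'$-separable. Closure of $\cc{F}'$ under shifting is exactly what fails: if $\ra{s}$ emulates $\ra{r}$ in ${\ra{S}\!}_k$ and $\la{s}_i \geq \ra{r}$, the shifted singleton is $\{\la{s}_i \vee \ra{s}\}$, and unless $\ra{s} \leq \la{s}_i$ this is a new separation strictly above $\la{s}_i$, which in general lies neither in $\cc{F}$ nor among the $\{\la{s}_j\}$. Your proposed fix --- choose the emulating separation of minimum order between $\ra{r}$ and $\ra{r}'$ and ``arrange'' $\ra{s} \leq \la{s}_i$ --- is not justified and cannot be in general: separability must be verified for \emph{every} non-forced pair $\ra{r} \leq \ra{r}'$, including pairs that cross $s_i$, and a minimum-order separation in the corridor between $\ra{r}$ and $\ra{r}'$ need not be nested with $s_i$ at all; neither $\phi$-essentiality nor the star condition on $\sigma$ gives you any leverage over such arbitrary pairs. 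So Theorem \ref{t:dual} cannot be applied to $\cc{F}'$ directly, and the rest of the argument has no starting point. (A smaller confusion: if you \emph{did} have an $S_k$-tree over $\cc{F}'$, the $\phi$-essentiality of each $s_i$ would already force $\ra{s}_i$ to occur as a leaf separation --- any $\cc{F}$-tangle containing $\la{s}_i$ must live at a leaf whose star is some $\{\la{s}_j\}$, and regularity plus the star condition force $j=i$ --- so your second, iterative shifting stage would be unnecessary; conversely, in a tree over $\cc{F}'$ there is no reason to find a leaf separation $\ra{r}\leq\ra{s}_i$ other than $\ra{s}_i$ itself.)

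The paper resolves precisely this obstacle by not working with $\cc{F}'$ in the duality step. It applies Theorem \ref{t:dual} to the up-closed family $\overline{\cc{F}} = \cc{F} \cup \{\{\la{x}\} : \la{s}_i \leq \la{x} \text{ for some } i\}$, which is closed under shifting for free, because a shift only moves the orientation pointing away from $\ra{r}$ upwards, so an up-closed set of singletons is preserved. An $\overline{\cc{F}}$-tangle is in particular an $\cc{F}'$-tangle, so in the other case one gets an $S_k$-tree over $\overline{\cc{F}}$ whose ``new'' leaves are labelled by separations $\ra{x}_{i,j} \leq \ra{s}_i$. Only now do efficiency and submodularity enter, in the place where they actually work: since $s_i$ efficiently distinguishes two $\cc{F}$-tangles $O_1, O_2$ and some leaf $\ra{x}_{i,j}$ satisfies $\la{x}_{i,j} \in O_2$, every $\ra{r} \geq \ra{x}_{i,j}$ gives $|\ra{s}_i \wedge \ra{r}| \geq |\ra{s}_i|$, hence $|\ra{s}_i \vee \ra{r}| \leq |\ra{r}| < k$, so $\ra{s}_i$ emulates $\ra{x}_{i,j}$ in ${\ra{S}\!}_k$ (and for $\overline{\cc{F}}$, by its closure under shifting). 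One then shifts the tree onto $\ra{s}_n, \ra{s}_{n-1}, \ldots, \ra{s}_1$ in turn via Lemma \ref{l:link}, checking at each stage that any remaining leaf separation strictly above the installed $\la{s}_i$ is trivial and hence forced by $\cc{F}$, so that the final tree is over $\cc{F}'$. Your instinct to use efficiency, submodularity and the star structure is the right one, but these tools are needed to shift an $\overline{\cc{F}}$-tree onto the $\ra{s}_i$, not to establish a separability property of $\cc{F}'$ that in general does not hold.
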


If we compare Lemma \ref{l:main} to Theorem \ref{t:dual}, we see that Lemma \ref{l:main} can be viewed in some way as a method of building a new duality theorem from an old one, by adding some singleton separations to our set $\cc{F}$. The restriction to considering only $S_k$-profiles rather than those of arbitrary separation systems $S$ contained in $U$ comes from the proof, where we need to use the submodularity of the order function to show that certain separations emulate others. It would be interesting to know if the result would still be true for any $S$ which is separable, or even any pair $\cc{F}$ and $S$ such that $S$ is $\cc{F}$-separable. The condition that $\cc{F}$ contains every co-small separation as a singleton is to ensure that the $\cc{F}$-tangles are regular $\cc{F}$-tangles, as we will need to use the slightly stronger consistency condition in the proof.

What does Lemma \ref{l:main} say in the case of $k$-tangles arising from graphs? Recall that $\tau_k$ is the set of $\cc{T}_k^*$-tangles, and that $\cc{T}_k^*$ is closed under shifting, and contains $\{\ra{r}\}$ for every co-small $\ra{r}$. Given a star $\sigma = \{ {\ra{s}\!}_i \, : \, i \in [n] \} \subseteq {\ra{S}\!}_k$ we note that a $\cc{T}_k^* \cup \bigcup_1^n \{{\la{s}\!}_i\}$-tangle is just a $\cc{T}_k^*$-tangle which contains ${\ra{s}\!}_i$ for each $i$, and so it is a $k$-tangle which orients the star inwards. Conversely, an $S_k$-tree over $\cc{T}_k^* \cup \bigcup_1^n \{{\la{s}\!}_i\}$ in which each ${\ra{s}\!}_i$ appears as a leaf separation will give a tree-decomposition of the part of the graph at $\sigma$. In particular, since each of the separations in the tree will be nested with $\sigma$, the separators $A_i \cap B_i$ of the separations ${\ra{s}\!}_i$ will lie entirely on one side of every separation in the tree, and so this will in fact be a decomposition of the torso of the part (since any extra edges in the torso lie inside the separators).

Therefore, in practice this tells us that if we have a part in a tree-decomposition whose separators are $\tau_k$-essential then either there is a $k$-tangle in the graph which is contained in that part, or there is a tree-decomposition of the torso of that part with branch-width $<\!k$. In the second case we can then refine the original tree-decomposition by combining it with this new tree-decomposition. By applying this to each inessential part of one of the canonical tree-decompositions formed in \cite{CDHH13CanonicalAlg} we get the following result, which easily implies Theorems \ref{t:maintangle} and \ref{t:mainblock} by taking $\cc{F} = \cc{T}_k^*$ and $\cc{B}_k^*$ respectively.

\begin{corollary}\label{c:tangle}
Let $k \geq 3$ and let $\cc{F} \supseteq \cc{P}_k$ be such that the set $\phi$ of regular $\cc{F}$-tangles is canonical. If $\cc{F}^*$ is defined as in Theorem \ref{t:genprof} then there exists a nested set of separations $\cc{N} \subseteq S_k$ corresponding to an $S_k$-tree $(T,\alpha)$ of $G$ such that:
\begin{itemize}
\item there is a subset $\cc{N}' \subseteq \cc{N}$ that is fixed under every automorphism of $G$ and distinguishes all the regular $\cc{F}$-tangles in $\phi$ efficiently;
\item every vertex $t \in T$ either contains a regular $\cc{F}$-tangle or $\{\alpha(t',t) \, : (t',t) \in \ra{E}T)\} \in \cc{F}^*$.
\end{itemize}
\end{corollary}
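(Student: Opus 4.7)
My plan is to start with a canonical tangle-distinguishing tree-decomposition and refine each inessential part using Lemma \ref{l:main}. First, I apply Theorem \ref{t:dist} to the canonical set $\phi$ of regular $\cc{F}$-tangles (which is a canonical set of $k$-profiles, since each such tangle is consistent and avoids $\cc{P}_k \subseteq \cc{F}$). This yields a canonical nested set $\cc{N}' := \cc{N}_\Sigma(G,\phi)$ of $\phi$-essential separations of order $<\!k$ that distinguishes $\phi$ efficiently, and a corresponding $S_k$-tree $(T_0,\alpha_0)$.

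Next, for each inessential vertex $t$ of $T_0$, let $\sigma_t = \{\ra{s}_1, \ldots, \ra{s}_n\}$ be the star at $t$, and apply Lemma \ref{l:main} with $\cc{F}^*$ (from Theorem \ref{t:genprof}) in the role of $\cc{F}$ there. The hypotheses hold: $\cc{F}^*$ is a standard set of stars, closed under shifting, which contains $\{\ra{r}\}$ for every co-small $\ra{r}$ and whose tangles are exactly $\phi$; and each $\ra{s}_i$ is $\phi$-essential by construction of $\cc{N}'$. The first alternative of the lemma, an $\cc{F}^* \cup \bigcup_i \{\{\la{s}_i\}\}$-tangle of $S_k$, would be a regular $\cc{F}$-tangle extending $\sigma_t$ and hence contained in $V_t$, contradicting the inessentiality of $t$. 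Hence the second alternative holds, giving an $S_k$-tree $(T_t,\alpha_t)$ over $\cc{F}^* \cup \bigcup_i \{\{\la{s}_i\}\}$ in which each $\ra{s}_i$ is a leaf separation.

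I then combine these local trees into a single $S_k$-tree $(T,\alpha)$ by replacing each inessential vertex $t$ of $T_0$ with $T_t$, identifying the leaf $w_i$ of $T_t$ (at which the leaf separation is $\ra{s}_i$) with the corresponding neighbor of $t$ in $T_0$. The induced separation set $\cc{N}$ contains $\cc{N}'$, so $\cc{N}'$ remains canonical and still efficiently distinguishes $\phi$. Nestedness of $\cc{N}$ follows since each $\cc{N}_t := \alpha_t(\ra{E}(T_t))$ lies ``inside'' $\sigma_t$: as the $\ra{s}_i$ are leaf separations of $T_t$, order-preservation of $\alpha_t$ forces every image separation to be pinned by $\sigma_t$, hence nested with every separation of $\cc{N}'$ and of $\cc{N}_{t'}$ for any other inessential $t'$. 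Finally, every vertex of $T$ is either an essential vertex inherited from $T_0$ (still containing its regular $\cc{F}$-tangle) or an interior vertex of some $T_t$; in the latter case its star in $T$ coincides with its star in $T_t$, has at least two elements, and is therefore not one of the added singletons $\{\la{s}_i\}$, so it must lie in $\cc{F}^*$ as required.

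The main subtlety is the passage via $\cc{F}^*$ rather than working directly with $\cc{F}$ in Lemma \ref{l:main}: this is essential because the lemma demands a set of stars closed under shifting whose tangles coincide with $\phi$, a property supplied not by $\cc{F}$ itself but by the $\cc{F}^*$ produced by Theorem \ref{t:genprof}. Once this link is made, ruling out the ``bad'' alternative of Lemma \ref{l:main} via inessentiality, and then checking nestedness of the glued tree, are both fairly direct.
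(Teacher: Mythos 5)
Your proposal is correct and follows essentially the same route as the paper: apply Theorem \ref{t:dist} to get the canonical $\phi$-essential nested set $\cc{N}'$, pass to $\cc{F}^*$ via Theorem \ref{t:genprof}, apply Lemma \ref{l:main} to the star at each inessential part and rule out the tangle alternative by inessentiality, then take the union of the resulting nested sets. Your gluing and nestedness discussion is in fact more explicit than the paper's (which simply takes $\cc{N} = \cc{N}' \cup \bigcup_t \cc{N}_t$); the only slight slip is that vertices of $T$ coming from a $T_t$ need not be interior vertices — leaves whose singleton stars are forced by $\cc{F}^*$ also survive the gluing — but their stars lie in $\cc{F}^*$ regardless, so the conclusion stands.
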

\begin{proof}
By Theorem \ref{t:dist} there exists a canonical nested set $\cc{N}'$ of $\phi$-essential separations of order $<\!k$ that distinguishes all the regular $\cc{F}$-tangles in $\phi$ efficiently, and by Theorem \ref{t:genprof} $\phi$ is also the set of $\cc{F}^*$-tangles. Given an inessential part $V_t$ in the corresponding tree-decomposition $(T,\cc{V})$, this part corresponds to some star of separations  $\sigma = \{{\ra{s}\!}_i \, : \, i \in [n]\} \subseteq \cc{N}'$. Each ${\ra{s}\!}_i \in \cc{N}'$ is $\phi$-essential, and, by Theorem \ref{t:genprof}, $\cc{F}^*$ is a standard set of stars which is closed under shifting, and contains $\{\ra{r}\}$ for every co-small $\ra{r}$. Hence, by Lemma \ref{l:main}, if we let $\cc{F}' = \cc{F}^* \cup \bigcup_1^n \{ {\ra{s}\!}_i \} $, there is either an $\cc{F}'$-tangle of $S_k$, or an $S_k$-tree over $\cc{F}'$ in which each ${\ra{s}\!}_i$ appears as a leaf separation.

Suppose that there exists an $\cc{F}'$-tangle $O$. Since $O$ avoids $\cc{F}' \supseteq \cc{F}^*$, it is also an $\cc{F}^*$-tangle, and so $O \in \phi$. By assumption $\cc{N}'$ distinguishes all the regular $\cc{F}$-tangles in $\phi$, so $O$ is contained in some part of the tree-decomposition, and since $O$ avoids $\{\{ {\la{s}\!}_i\} \, : \, i \in [n] \}$, it must extend $\sigma$, and so this part must be $V_t$.  However, this contradicts the assumption that $V_t$ is inessential.

Therefore, by Lemma \ref{l:main}, there exists an $S_k$-tree over $\cc{F}^* \cup \bigcup_1^n \{{\la{s}\!}_i \}$ 
. This gives a nested set of separations $\cc{N}_t$ which contains the set $\sigma$. If we take such a set for each inessential $V_t$ then the set
\[
\cc{N} = \cc{N}' \cup \bigcup_{V_t \text{ inessential}} \cc{N}_t
\]
satisfies the conditions of the corollary.
\end{proof}

We note that, whilst the existence of such a tree-decomposition is interesting in its own right, perhaps a more useful application of Lemma \ref{l:main} is that we can conclude the same for \emph{every} tree-decomposition constructed by the algorithms in \cite{CDHH13CanonicalAlg}. So, we are able to choose whichever algorithm we want to construct our initial tree-decomposition, perhaps in order to have some control over the structure of the essential parts, and we can still conclude that the inessential parts have small branch-width.

Apart from the set $\tau_k$ of $k$-tangles there is another natural set of tangles for which tangle-distinguishing tree-decompositions have been considered. Since a $k$-tangle, as a $\cc{T}_k$-avoiding orientation of $S_k$, induces an orientation on $S_i$ for all $i \leq k$, it induces an $i$-tangle for all $i \leq k$. If an $i$-tangle for some $i$ is not induced by any $k$-tangle with $k > i$ we say it is a \emph{maximal tangle}.

Robertson and Seymour \cite{GMX} showed that there is a decomposition of the graph which distinguishes its maximal tangles, but the theorem does not tell us much about the structure of this tree-decomposition. The approach of Carmesin et al was extended by Diestel, Hundertmark and Lemanczyk \cite{HL2015} to show how an iterative approach to Theorem \ref{t:dist} could be used to build canonical tree-decompositions distinguishing the maximal tangles in a graph (in fact they showed a stronger result for a broader class of profiles which implies the result for tangles). In particular, the results of \cite{HL2015} imply the following.

\begin{theorem}\label{t:maximal}
If $\phi$ is a canonical set of tangles in a graph $G$, then there exists a canonical nested set $\cc{N}(G,\phi)$ of $\phi$-essential separations that distinguishes all the tangles in $\phi$ efficiently.
\end{theorem}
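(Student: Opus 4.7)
The plan is to derive Theorem~\ref{t:maximal} by iterating Theorem~\ref{t:dist} across increasing orders, in the style of Diestel, Hundertmark and Lemanczyk \cite{HL2015}. For each $k \geq 2$, let $\phi_k$ denote the set of $k$-profiles of $G$ obtained by restricting to $S_k$ those tangles in $\phi$ of order at least $k$. Since restriction to $S_k$ commutes with the action of $\mathrm{Aut}(G)$ and $\phi$ is canonical, each $\phi_k$ is a canonical set of $k$-profiles to which Theorem~\ref{t:dist} applies directly.

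I would then build the nested set inductively on $k$. Suppose a canonical nested set $\cc{N}_{<k}$ of $\phi$-essential separations of order $<k$ has already been constructed which efficiently distinguishes every pair of tangles in $\phi$ distinguishable by some separation of order $<k$. At the $k$-th stage, consider each part $V_t$ of the corresponding tree-decomposition that still contains more than one tangle of $\phi$: the tangles living in $V_t$ restrict to distinct elements of $\phi_k$, and one applies Theorem~\ref{t:dist} within $V_t$ (to the submodular subsystem consisting of those separations in $S_k$ which are nested with the star of adhesion sets at $V_t$) to obtain a canonical nested set of efficient distinguishers of order exactly $k-1$. These new separations are by construction nested with $\cc{N}_{<k}$, so their union with $\cc{N}_{<k}$ is a canonical nested set $\cc{N}_k$. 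Taking $\cc{N}(G,\phi) := \bigcup_k \cc{N}_k$ then yields the required canonical nested set of $\phi$-essential separations; efficiency follows immediately, because any pair $T_1,T_2 \in \phi$ with $\kappa(T_1,T_2) = \ell$ is distinguished at stage $k=\ell+1$ by a separation of order exactly $\ell$.

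The main obstacle is showing that at stage $k$ one can restrict attention to separations living inside a single part $V_t$ while still having enough separations available to invoke Theorem~\ref{t:dist} on the restricted profile family. One must verify that the submodular system of $S_k$-separations nested with the adhesion star at $V_t$ is rich enough that every pair of tangles in $\phi_k$ whose `home' is in $V_t$ admits an efficient distinguisher inside it; this is done using submodularity of the order function, exactly as in the central inductive step of \cite{HL2015}, where given any $\phi_k$-essential distinguisher $\ra{r}$ between two such tangles one shows that meeting $\ra{r}$ with the adhesion separations at $V_t$ yields a distinguisher of no greater order that is already nested with $\cc{N}_{<k}$. Canonicity is then preserved throughout since each stage takes canonical input to canonical output.
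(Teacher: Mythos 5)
Your proposal takes essentially the same route as the paper: the paper does not prove Theorem \ref{t:maximal} itself but imports it from \cite{HL2015}, describing exactly the iterative scheme you outline (at stage $i$ one adds a canonical nested set of order-$(i-1)$ separations efficiently distinguishing the restrictions of the tangles, chosen nested with the previously constructed sets). Your sketch matches this, deferring the one genuinely technical point --- that efficient distinguishers of the higher-order profiles can be uncrossed with the earlier nested separations without increasing their order, so that a version of Theorem \ref{t:dist} can be applied ``inside a part'' --- to \cite{HL2015}, just as the paper does; only beware the small indexing slip in your inductive hypothesis, which should assume $\cc{N}_{<k}$ consists of separations of order $<k-1$ distinguishing the pairs with $\kappa<k-1$, with the order-$(k-1)$ separations added at stage $k$.
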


In particular we can apply this to the set of maximal tangles. By looking directly at the proof in \cite{HL2015} one can see the structure of the tree-decomposition formed. The proof proceeds iteratively, by choosing for each $i$ in a turn a nested set of $(i-1)$-separations (that is, separations of order $(i-1)$), which distinguishes efficiently the pairs of $i$-tangles which are distinguished efficiently by an $(i-1)$-separation, such that this set is also nested with the previously constructed sets. 

At each stage in the construction we have a tree-decomposition which distinguishes all the tangles of order $\leq i$ in the graph. Some of these $i$-tangles however will extend to $(i+1)$-tangles in different ways (induced by distinct maximal tangles in the graph). The next stage constructs a nested set of separations distinguishing such tangles, which gives a tree-decomposition of the torsos of the relevant parts. In these tree-decompositions some parts will be `essential', and containing $(i+1)$-tangles, but some will be inessential.

It is natural to expect that the inessential parts constructed at stage $i$ should have branch-width $<i$, by a similar argument as Corollary \ref{c:tangle}. However it is not always the case that the separators of the inessential part satisfy the conditions of Lemma \ref{l:main}, since it can be the case that these inessential parts have separators which are separations constructed in an earlier stage of the process, and as such might not efficiently distinguish a pair of tangles of order $i$.

\begin{question}
Can we bound the branch-width of the inessential parts in such a tree-decomposition in a similar way?
\end{question}
A positive answer to the previous question in the strongest form would give the following analogue of Theorem \ref{t:maintangle}. 
\begin{conjecture}
For every graph $G$ there exists a canonical sequence of tree-decompositions $(T_i,\cc{V}_i)$ for $1 \leq i \leq n$ of $G$ such that
\begin{itemize}
\item $(T_i,\cc{V}_i)$ distinguishes every $i$-tangle in $G$ for each $i$;
\item $(T_n, \cc{V}_n)$ distinguishes the set of maximal tangles in $G$.
\item $(T_{i+1},\cc{V}_{i+1})$ refines $(T_i,\cc{V}_i)$ for each $i$;
\item The torso of every inessential part in $(T_i,\cc{V}_i)$ has branch-width $<i$.
\end{itemize}
\end{conjecture}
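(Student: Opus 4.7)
The plan is to combine the iterative canonical construction of Diestel, Hundertmark and Lemanczyk \cite{HL2015} with an inductive application of a strengthened version of Lemma \ref{l:main}. First, build a preliminary sequence $(T'_i, \cc{V}'_i)_{i=1}^n$ satisfying the first three bullets, exactly as in \cite{HL2015}: at each stage $i$, for each part of $(T'_{i-1}, \cc{V}'_{i-1})$ containing more than one $i$-tangle, canonically choose via Theorem \ref{t:dist} a nested set of $\tau_i$-essential separations efficiently distinguishing these $i$-tangles and nested with everything previously chosen. Then $(T'_n, \cc{V}'_n)$ distinguishes the maximal tangles and the sequence refines.

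To secure the fourth bullet I would induct on $i$. Any part of $(T'_{i-1}, \cc{V}'_{i-1})$ inessential at stage $i-1$ is inherited into $(T'_i, \cc{V}'_i)$ with unchanged torso, so by the induction hypothesis it already has branch-width $<\!i-1<i$. The newly inessential parts at stage $i$ arise by splitting some essential part of $(T'_{i-1}, \cc{V}'_{i-1})$ with the fresh stage-$i$ separators; each such sub-part corresponds to a star $\sigma\subseteq S_i$ of separators, and the natural move is to invoke Lemma \ref{l:main} with $\cc{F}=\cc{T}_i^*$, $\phi=\tau_i$ and this $\sigma$, then graft the resulting $S_i$-tree over $\cc{T}_i^*\cup\bigcup_j\{\la{s_j}\}$ into the existing tree to produce $(T_i, \cc{V}_i)$. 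As in the proof of Corollary \ref{c:tangle}, the possibility of an $\cc{F}'$-tangle is excluded by the assumption that the sub-part is inessential.

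The main obstacle, explicitly flagged in the paper, is that $\sigma$ may contain a separator $s$ of order $m<i$ that was introduced at an earlier stage and is $\tau_{m+1}$-essential but not $\tau_i$-essential, so Lemma \ref{l:main} does not apply directly. The hard technical step is therefore to prove a strengthened shift lemma in which the hypothesis that each $s_j\in\sigma$ be $\phi$-essential is weakened to the statement that $s_j$ be $\tau_{|s_j|+1}$-essential -- a property which does hold for every separator appearing in $(T'_i, \cc{V}'_i)$ by construction, since a separation of order $m$ efficiently distinguishing two $i$-tangles also efficiently distinguishes the induced $(m+1)$-tangles. The crucial point is to re-verify that the shift map $f\downarrow^{\ra{r}}_{\ra{s}}$ stays within $\ra{S}_i$: if $s$ efficiently separates $(m+1)$-profiles $P_1, P_2$, then for any $\ra{x}\in\ra{S}_i$ nested with $\sigma$ and oriented toward the $P_1$-side, the corner $\ra{x}\wedge\ra{s}$ still distinguishes $P_1$ from $P_2$, giving $|\ra{x}\wedge\ra{s}|\geq|\ra{s}|$, and hence $|\ra{x}\vee\ra{s}|\leq|\ra{x}|<\!i$ by submodularity. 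The rest of the proof of Lemma \ref{l:main} should then transfer essentially verbatim, and the canonicity statement of the conjecture is preserved in the sense of Corollary \ref{c:tangle}, since each invocation of Theorem \ref{t:dist}, Theorem \ref{t:genprof} and the strengthened lemma can be made automorphism-invariantly. Iterating over $i=1,\dots,n$ yields the desired canonical refining sequence.
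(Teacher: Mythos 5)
You should first be aware that this statement is not proved in the paper: it is posed as an open conjecture, immediately after a question, and the paper explicitly identifies the obstruction, namely that an inessential part arising at stage $i$ of the iterative construction of \cite{HL2015} may have separators introduced at an earlier stage, which are only $\tau_{m+1}$-essential for their own order $m$ and need not efficiently distinguish any pair of $i$-tangles, so Lemma \ref{l:main} does not apply. Your proposal correctly locates this difficulty, but the ``strengthened shift lemma'' you invoke to overcome it is exactly the missing ingredient, and the one-line verification you offer for it does not work.

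Here is the concrete failure. In the proof of Lemma \ref{l:main}, the fact that ${\ra{s}}_i$ emulates the leaf separation ${\ra{x}}_{i,j}$ in ${\ra{S}}_k$ is proved using that $O_1$ and $O_2$ are tangles \emph{of the same order} $k$ as the separation system being shifted: $O_2$ orients the leaf separation ($\la{x}_{i,j}\in O_2$), and since ${\ra{x}}_{i,j}\leq {\ra{s}}_i\wedge\ra{r}\leq{\ra{s}}_i$, consistency forces $O_2$ to orient the corner outwards and $O_1$ to orient it inwards; only then does efficiency of $s_i$ give $|{\ra{s}}_i\wedge\ra{r}|\geq|{\ra{s}}_i|$ and hence, via submodularity, $|{\ra{s}}_i\vee\ra{r}|\leq|\ra{r}|<k$. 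In your setting $s$ has order $m$, possibly with $m+1<i$, and is only assumed to distinguish efficiently two $(m+1)$-profiles $P_1,P_2$. These orient only $S_{m+1}$: they need not orient the leaf separation $\ra{x}$ of the $S_i$-tree (whose order can be up to $i-1$), and consistency with $\la{s}\in P_2$ gives no information about the orientation of separations lying \emph{below} $\ra{s}$, so $P_2$ may well orient the corner $\ra{r}\wedge\ra{s}$ the same way as $P_1$ does. Thus in the problematic case $|\ra{r}\wedge\ra{s}|\leq m$ the corner need not distinguish $P_1$ from $P_2$ at all, efficiency of $s$ yields nothing, the corner may have order strictly less than $m$, and submodularity then only gives $|\ra{r}\vee\ra{s}|\leq|\ra{r}|+|\ra{s}|-|\ra{r}\wedge\ra{s}|$, which can be $\geq i$: the shifted tree can leave ${\ra{S}}_i$. (A further, smaller, inaccuracy: you check the corner condition only for $\ra{x}$ ``nested with $\sigma$'', but emulation requires control of $\ra{s}\vee\ra{t}$ for \emph{every} $\ra{t}\geq$ the leaf separation appearing in the tree.) So the proposal does not prove the conjecture; it reduces it to an unproved lemma that is essentially a restatement of the open problem the paper raises.
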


\subsection{Proof of Lemma \ref{l:main}}
\begin{proof}[Proof of Lemma \ref{l:main}]
Let us write
\[
\overline{\cc{F}}= \cc{F} \cup \{ \{\la{x}\} \, : \, {\la{s}\!}_i \leq \la{x} \text{ for some } i \in [n] \}.
\]

We first claim that ${\ra{S}\!}_k$ is $\overline{\cc{F}}$-separable. We note that by [\cite{DiestelOumDualityIb}, Lemma 3.4] for every universe $\ra{U}$ and any $k \in \mathbb{N}$, the separation system ${\ra{S}\!}_k$ is separable. Therefore it is sufficient to show that $\overline{\cc{F}}$ is closed under shifting. By assumption $\cc{F}$ is closed under shifting, and the image of any singleton star $\{ \la{x} \} \in \overline{\cc{F}}$ under some relevant $f\downarrow^{\ra{r}}_{\ra{s}}$ is $\{\la{y} \}$ for some separation $\la{x} \leq \la{y}$, and hence $\{ \la{y} \} \in \overline{\cc{F}}$. Therefore, $\overline{\cc{F}}$ is closed under shifting. Furthermore, since $\cc{F}$ was standard, so is $\overline{\cc{F}}$. Hence, we can apply Theorem \ref{t:dual} to $\overline{\cc{F}}$.

By Theorem \ref{t:dual}, either there exists an $S_k$-tree over $\overline{\cc{F}}$, or there exists an $\overline{\cc{F}}$-tangle. Since $\overline{\cc{F}} \supset \cc{F}'$, every $\overline{\cc{F}}$-tangle is also an $\cc{F}'$-tangle, and so in the second case we are done. Therefore we may assume that there exists an $S_k$-tree over $\overline{\cc{F}}$, $(T,\alpha)$. We will use $(T,\alpha)$ to form an $S_k$-tree over $\cc{F}'$.

Since there is no $\overline{\cc{F}}$-tangle, each $\cc{F}$-tangle $O$ must contain some ${\la{s}\!}_i$. We note that, since by assumption $\cc{F}$ contains every co-small separation, $O$ is regular. Hence, since $\sigma$ is a star, this ${\la{s}\!}_i$ is unique. We claim that, for every $\cc{F}$-tangle $O$ such that ${\la{s}\!}_i \in O$ there is some leaf separation $\ra{x} \in \alpha(\ra{E}(T))$ such that $\la{x} \leq {\la{s}\!}_i$. 

Indeed, since $O$ is a consistent orientation of ${\ra{S}\!}_k$, it is contained in some vertex of $(T\alpha)$. However, the star of separations at that vertex, by definition of an $\cc{F}$-tangle, cannot lie in $\cc{F}$, and so must lie in $\overline{\cc{F}} \setminus \cc{F}$. Since each of these stars are singletons, the vertex must be a leaf. Therefore, there is some leaf separation $\ra{x}$ such that $\la{x} \in O$. Since $\{ \la{x} \} \in \overline{\cc{F}} \setminus \cc{F}$, it follows that ${\la{s}\!}_r \leq \la{x}$ for some $r \in [n]$. However, since ${\la{s}\!}_i \in O$, and it was the unique separation in $\sigma$ with that property, it follows that $r=i$, and so ${\la{s}\!}_i \leq \la{x}_i$ as claimed.

If the only leaf separations in $\overline{\cc{F}} \setminus \cc{F}$ were the separations $\{ {\ra{s}\!}_i \, : \, i \in [n]\}$ then $(T,\alpha)$ would be the required $S$-tree over $\cc{F}'$. In general however the tree will have a more arbitrary set $\{  {\ra{x}\!}_{i,j} \}$ of leaf separations (along with some leaf separations arising as separations forced by $\cc{F}$) where ${\la{s}\!}_i \leq {\la{x}\!}_{i,j}$, see Figure \ref{f:tree}. Note that there may not necessarily be any edges in this tree corresponding to the separations $s_i$. 
\begin{figure}[!ht]
\center
\includegraphics[scale=0.15]{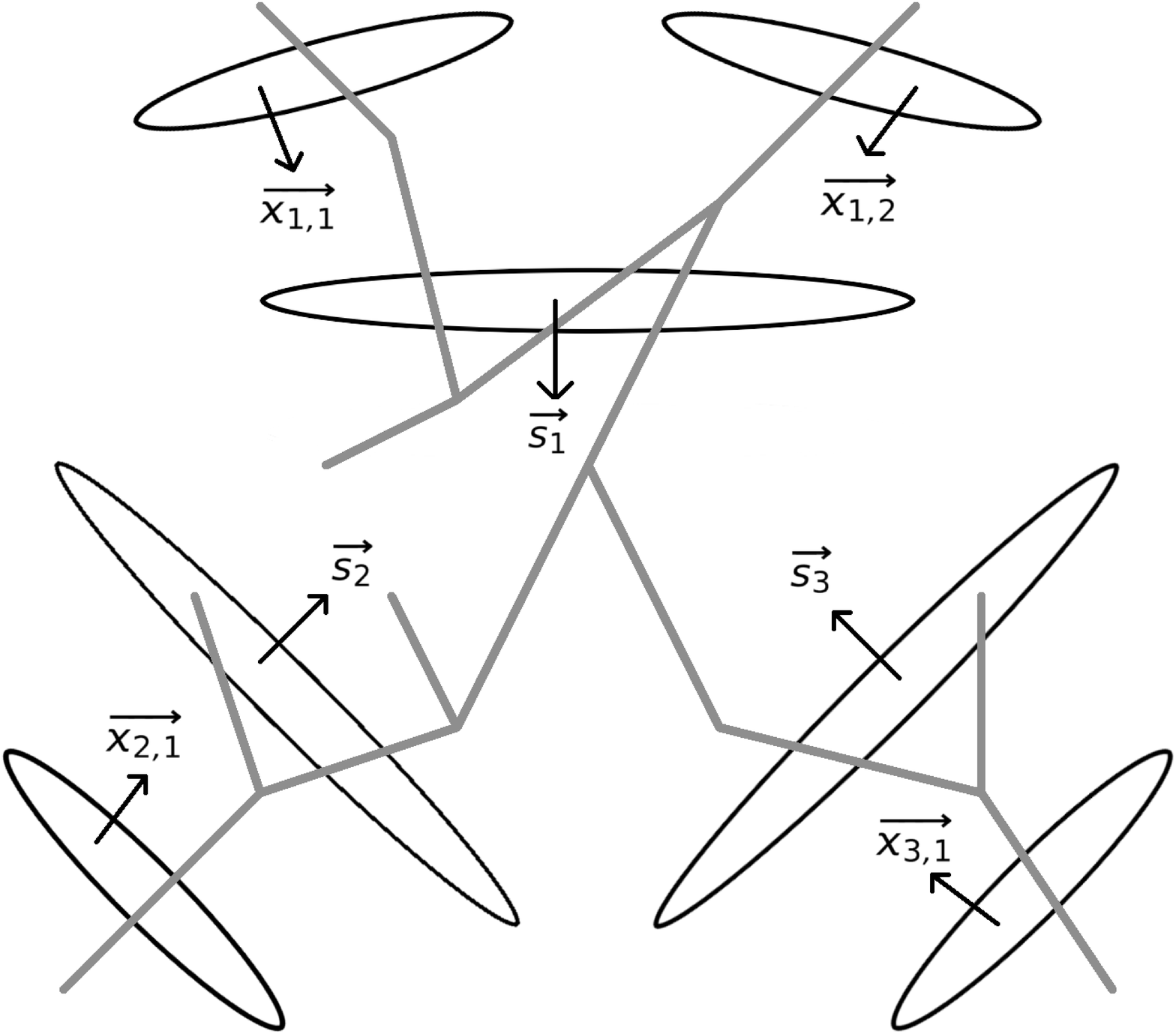}
\caption{The $S_k$-tree over $\overline{\cc{F}}$ with unlabelled leafs corresponding to separations forced by $\cc{F}$.}\label{f:tree}
\end{figure}

We claim that each ${\ra{s}\!}_i$ emulates some ${\ra{x}\!}_{i,j}$ in ${\ra{S}\!}_k$ for $\overline{\cc{F}}$. By assumption, every $s_i \in \sigma$ distinguishes efficiently some pair $O_1$ and $O_2$ of $\cc{F}$-tangles. Suppose that ${\ra{s}\!}_i \in O_1$ and ${\la{s}\!}_i \in O_2$. By our previous claim, there is some leaf separation ${\ra{x}\!}_{i,j}$ such that ${\la{x}\!}_{i,j} \in O_2$. We claim that ${\ra{s}\!}_i$ emulates this ${\ra{x}\!}_{i,j}$ in ${\ra{S}\!}_k$. Note that, since $\overline{\cc{F}}$ is closed under shifting, it would follow that ${\ra{s}\!}_i$ emulates ${\ra{x}\!}_{i,j}$ in ${\ra{S}\!}_k$ for $\overline{\cc{F}}$.  Note that, since ${\ra{s}\!}_i$ and ${\ra{x}\!}_{i,j}$ both distinguish two $\cc{F}$-tangles, they are non-trivial and non-degenerate.

Indeed, given any separation $\ra{r} \geq {\ra{x}\!}_{i,j}$ we have that ${\ra{s}\!}_i \geq {\ra{s}\!}_i \wedge \ra{r} \geq {\ra{x}\!}_{i,j}$  and so ${\ra{s}\!}_i \wedge \ra{r}$ distinguishes $O_1$ and $O_2$. Therefore, since $s_i$ distinguises $O_1$ and $O_2$ efficiently,  $|{\ra{s}\!}_i \wedge \ra{r}| \geq |{\ra{s}\!}_i|$. Hence, by submodularity, $|{\ra{s}\!}_i \vee \ra{r}| \leq |\ra{r}| < k$ and so ${\ra{s}\!}_i \vee \ra{r} \in S_k$. Therefore the image of $f\downarrow^{ {\ra{x}\!}_{i,j}}_{{\ra{s}\!}_i}$ is contained in $S_k$ and so ${\ra{s}\!}_i$ emulates ${\ra{x}\!}_{i,j}$ in ${\ra{S}\!}_k$. Furthermore, since ${\ra{x}\!}_{i,j}$ is non-trivial and non-degenerate, by the comment after Lemma \ref{l:link} we can assume that $T$ is irredundant, and that ${\ra{x}\!}_{i,j}$ is not the image of any other edge in $T$.

Since ${\ra{s}\!}_n$ and ${\ra{x}\!}_{n,j}$ satisfy the conditions of Lemma \ref{l:link}, we conclude that the shift of $(T,\alpha)$ onto ${\ra{s}\!}_n$ is an $S_k$-tree over $\overline{\cc{F}}$ which contains ${\ra{s}\!}_n$ as a leaf separation, and not as the image of any other edge. Let us write $(T_n,\alpha_n)$ for this $S_k$-tree.

If there is some leaf separation $\la{r}$ of $(T_n,\alpha_n)$ such that ${\la{s}\!}_n < \la{r}$ then, since the leaf separations form a star and ${\la{s}\!}_i$ is the image of a unique leaf, we also have that ${\ra{s}\!}_i < \la{r}$. Hence, $\ra{r}$ is trivial, and so $\{\la{r}\}  \in \cc{F}$. Therefore $(T_n,\alpha_n)$ is also an $S_k$-tree over 
\[
\overline{\cc{F}}_n= \cc{F} \cup \{ {\la{s}\!}_n \} \cup \{ \{\la{x}\} \, : \, {\la{s}\!}_i \leq \la{x} \text{ for some } i \in [n-1] \}.
\]

If we repeat this argument for each $1\leq i \leq n$, we end up with a sequence of $S_k$-trees $(T_n,\alpha_n),\newline(T_{n-1},\alpha_{n-1}), \ldots (T_1,\alpha_1)$ over $\overline{\cc{F}}$ such that $(T_j,\alpha_j)$ is also an $S_k$-tree over
\[
\overline{\cc{F}}_j= \cc{F} \cup \{{\la{s}\!}_i \,: \, i \geq j \} \cup \{ \{\la{x}\} \, : \, {\la{s}\!}_i \leq \la{x} \text{ for some } i \in [j-1] \}.
\]
We note that $\overline{\cc{F}}_1 = \cc{F}'$, and so $(T_1,\alpha_1)$ is an $S_k$-tree over $\cc{F}'$, completing the proof.
\end{proof}

\section{Further refining essential parts of tangle-distinguishing tree-decompositions}\label{s:further}
In some sense the tree-decompositions of Corollary \ref{c:tangle} tell us most about the structure of the graph when the essential parts correspond closely to the profiles inside them.  However, as the example in Figure \ref{f:inessential} shows, sometimes there can be essential parts which could be further refined, in order to more precisely exhibit the structure of the graph.

In this section we will discuss how the tools from the paper can be used to achieve this goal. Given a graph $G$ we call a separation $\la{x} \in {\ra{S}\!}_k$ \emph{inessential} if $\la{x} \in O$ for every $k$-tangle $O$ of $G$. Given a $k$-tangle $O$ let $\cc{M}(O)$ be the set of maximal separations in $O$, and let $\cc{M}_I(O)$ be the set of maximal inessential separations. Our main tool will be the following lemma.

\begin{lemma}\label{l:iness}
Let $G$ be a graph, $O$ be a $k$-tangle of $G$ and let $\la{x} \in \cc{M}_I(O)$ be non-trivial. Then there is an $S_k$-tree over $\cc{T}_k^* \cup \{\la{x}\}$.
\end{lemma}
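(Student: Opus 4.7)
The plan mirrors the strategy of the proof of Lemma \ref{l:main}: introduce an auxiliary family $\overline{\cc{F}}$ to which Theorem \ref{t:dual} applies cleanly, obtain an $S_k$-tree over $\overline{\cc{F}}$, and then shift its extra leaves onto $\ra{x}$ using the maximality of $\la{x}$ as the analogue of ``efficiency'' in Lemma \ref{l:main}'s emulation argument.

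First I would define $\overline{\cc{F}} := \cc{T}_k^* \cup \{\{\la{z}\} : \la{z} \in {\ra{S}\!}_k,\; \la{z} \geq \la{x}\}$ and verify that ${\ra{S}\!}_k$ is $\overline{\cc{F}}$-separable. Standardness is inherited from $\cc{T}_k^*$, and closure under shifting is immediate for the new singletons: if $\{\la{z}\} \in \overline{\cc{F}} \setminus \cc{T}_k^*$ then $f\downarrow^{\ra{r}}_{\ra{s}}(\la{z}) = \la{z} \vee \ra{s} \geq \la{z} \geq \la{x}$, so the shift stays in $\overline{\cc{F}}$. Combined with the already known separability of ${\ra{S}\!}_k$ and the fact that $\cc{T}_k^*$ is closed under shifting, this yields $\overline{\cc{F}}$-separability. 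Any $\overline{\cc{F}}$-tangle would be a $k$-tangle avoiding $\{\la{x}\}$ and hence would contain $\ra{x}$, contradicting the inessentiality of $\la{x}$; Theorem \ref{t:dual} then produces an $S_k$-tree $(T,\alpha)$ over $\overline{\cc{F}}$, which (after passing to an irredundant refinement, as after Lemma \ref{l:link}) has each non-trivial leaf separation $\ra{y}$ not in $\cc{T}_k^*$ satisfying $\la{y} \geq \la{x}$.

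The remaining task is, for every such leaf with $\la{y} > \la{x}$ strictly, to shift it onto $\ra{x}$ via Lemma \ref{l:link}; since $\{\la{x}\} \in \overline{\cc{F}}$ and $\overline{\cc{F}}$ is closed under shifting, the successive shifts produce an $S_k$-tree over $\overline{\cc{F}}$ in which every non-$\cc{T}_k^*$-leaf has leaf separation exactly $\ra{x}$, i.e.\ an $S_k$-tree over $\cc{T}_k^* \cup \{\la{x}\}$. Applying Lemma \ref{l:link} reduces to showing that $\ra{x}$ emulates each such $\ra{y}$ in ${\ra{S}\!}_k$: for any $\ra{t} \in {\ra{S}\!}_k$ with $\ra{t} \geq \ra{y}$ and $\ra{t} \neq \la{y}$, we need $\ra{x} \vee \ra{t} \in {\ra{S}\!}_k$. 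If $\ra{t} \geq \ra{x}$ then $\ra{x} \vee \ra{t} = \ra{t} \in {\ra{S}\!}_k$; and if $|\la{x} \vee \la{t}| \geq k$ the submodular inequality $|\ra{x}|+|\ra{t}| \geq |\ra{x} \vee \ra{t}|+|\la{x} \vee \la{t}|$ already forces $|\ra{x} \vee \ra{t}| < k$.

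The hard case, which I expect to be the main obstacle, is when $\la{x} \vee \la{t} \in {\ra{S}\!}_k$ and $\la{x} \vee \la{t} > \la{x}$ strictly. By maximality of $\la{x}$ in $\cc{M}_I(O)$, $\la{x} \vee \la{t}$ is then not inessential, so some $k$-tangle $O^*$ contains $\ra{x} \wedge \ra{t}$. Since $\la{t} \leq \la{y}$ and $\la{x} \leq \la{y}$ we get $\la{x} \vee \la{t} \leq \la{y}$, so $\ra{y} \leq \ra{x} \wedge \ra{t}$; combined with $\la{x} \in O^*$ (inessentiality), regularity then rules out $\la{y} \in O^*$ and forces the sink of $O^*$ to lie at some leaf $w^* \neq w$ whose leaf separation satisfies $\la{y}_{w^*} \geq \la{x}$ and $\la{y}_{w^*} \geq \ra{y}$. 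Further applications of regularity to $\la{y}_{w^*}, \la{x}$, and $\ra{x} \wedge \ra{t}$ (all in $O^*$), together with a second use of submodularity and the maximality of $\la{x}$, should then yield the required bound $|\ra{x} \vee \ra{t}| < k$, in direct analogy with the efficiency step of Lemma \ref{l:main}. Once emulation is established, the iterated shifting finishes the proof.
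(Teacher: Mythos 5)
Your first half---the family $\overline{\cc{F}} = \cc{T}_k^* \cup \{\{\la{z}\} \,:\, \la{z} \geq \la{x}\}$, its standardness and closure under shifting, and the use of the inessentiality of $\la{x}$ to rule out an $\overline{\cc{F}}$-tangle so that Theorem \ref{t:dual} yields an $S_k$-tree over $\overline{\cc{F}}$---is exactly the paper's argument. The second half, where you diverge, has a genuine gap: your plan rests on the claim that $\ra{x}$ emulates each ``bad'' leaf separation $\ra{y}$ in ${\ra{S}\!}_k$, and precisely the case you label as hard is left unproved, with only the hope that it works ``in direct analogy with the efficiency step of Lemma \ref{l:main}''. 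That analogy does not transfer. In Lemma \ref{l:main} the input to submodularity is an order bound: since ${\ra{s}\!}_i \wedge \ra{r}$ still distinguishes the two tangles that $s_i$ distinguishes \emph{efficiently}, one gets $|{\ra{s}\!}_i \wedge \ra{r}| \geq |{\ra{s}\!}_i|$ and hence $|{\ra{s}\!}_i \vee \ra{r}| \leq |\ra{r}| < k$. Maximality of $\la{x}$ in $\cc{M}_I(O)$ is a $\leq$-maximality, not an extremality of the order function: in your hard case it only tells you that $\la{x} \vee \la{t}$ is not inessential, i.e.\ that some tangle $O^*$ contains $\ra{x} \wedge \ra{t}$, and this gives no lower bound on $|\la{x} \vee \la{t}|$, so submodularity cannot be invoked to conclude $|\ra{x} \vee \ra{t}| < k$. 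Locating $O^*$ in the tree and applying regularity does not produce such a bound either, and it is not clear the emulation claim is even true. (Moreover, even granting emulation, the ``successive shifts'' step needs justification: shifting the tree onto $\ra{x}$ at one bad leaf replaces the labels at the other bad leaves by separations of the form $\la{y}' \vee \ra{x}$, which are again only known to be $\geq \la{x}$, so each further shift requires the hypotheses of Lemma \ref{l:link} to be re-verified, and it is not shown that the process terminates with every non-$\cc{T}_k^*$ leaf labelled exactly $\ra{x}$.)

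The paper finishes quite differently and much more cheaply, with no shifting at all: the given tangle $O$ contains $\la{x}$ (by inessentiality) and avoids $\cc{T}_k^*$, so $O$ is contained in a vertex of the tree whose star is a singleton $\{\la{r}\}$ with $\la{r} \geq \la{x}$; since the star at the vertex containing $O$ lies in $O$, the maximality of $\la{x}$ forces $\la{r} = \la{x}$, so the tree produced by Theorem \ref{t:dual} is already over $\cc{T}_k^* \cup \{\la{x}\}$. In short, the maximality of $\la{x}$ should be applied through the tangle $O$ itself to pin down the relevant leaf star, rather than being fed into an emulation-and-shifting argument, where it is too weak to do the job.
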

\begin{proof}
As in the proof of Lemma \ref{l:main} let us consider the family of stars
\[
\cc{F} = \cc{T}_k^*\cup \{ \{ \la{r}\} \,:\, \la{x} \leq \la{r} \}.
\]
A similar argument show that this family is standard and closed under shifting, and so Theorem \ref{t:dual} asserts the existence of a $\cc{F}$-tangle, or an $S_k$-tree over $\cc{F}$. As before, an $\cc{F}$-tangle would be a $k$-tangle of $G$ which contains $\ra{x}$, contradicting the fact that $\la{x}$ is inessential. Therefore, there is an $S_k$-tree over $\cc{F}$. However, $O$ must live in some part of this tree-decomposition, and since $O$ is $\cc{T}^*_k$-avoiding it must live in some leaf vertex, corresponding to a singleton star $\{ \la{r} \}$ for some $\la{x} \leq \la{r}$. However, $\la{x}$ was a maximal separation in $O$ and hence $\la{r} \not\in O$ unless $\la{r} = \la{x}$. Therefore the $S_k$-tree is in fact over $\cc{T}_k^* \cup \{\la{x}\}$. 
\end{proof}

Lemma \ref{l:iness} tell us that for every $\la{x} \in \cc{M}_I(O)$ there is a tree-decomposition of the part of the graph behind $\la{x}$ with branch-width $<\!k$. So, we could perhaps hope to refine our canonical $k$-tangle-distinguishing tree-decompositions further using these tree-decompositions. However, there is no guarantee that $\cc{M}_I(O)$ will be nested with the $\tau_k$-essential separations used in a $k$-tangle-distinguishing tree-decomposition, and so we cannot in general refine such tree-decompositions naively in this way. Moreso, in order to decompose as much of the inessential parts of the graph as possible we would like to take such a tree for \emph{each} such maximal separation, however again in general, $\cc{M}_I(O)$ itself may not be nested. 

Our plan will be to find, for each  $\la{x} \in \cc{M}_I(O)$, some inessential separation $\la{u}$ such that $\ra{u}$ emulates $\ra{x}$ in $\ra{S}_k$, that is also nested with the separations from the $k$-tangle-distinguishing tree-decomposition. Furthermore we would like to able to do this in such a way that the separations $\ra{u}$ emulating different maximal separations form a star. Then, for each maximal separation, we could shift the $S_k$-tree given by Lemma \ref{l:iness} to an $S_k$-tree over $\cc{T}^*_k \cup \{ \la{u} \}$. These tree-decompositions could then be used to refine our $k$-tangle-distinguishing tree-decomposition further.  We will in fact show a more general result that may be of interest in its own right.

\subsection{Uncrossing sets of separations}
Given two separations $\ra{r} \leq \ra{s}$ in an arbitrary universe with an order function, we say that $\ra{s}$ is \emph{linked} to $\ra{r}$ if for every $\ra{x} \geq \ra{r}$ we have that
\[
|\ra{x} \vee \ra{s}| \leq |\ra{x}|.
\]
In particular we note that if $\ra{r}, \ra{s} \in{\ra{S}\!}_k$, then $\ra{s}$ being linked to $\ra{r}$ implies that $\ra{s}$ emulates $\ra{r}$ in ${\ra{S}\!}_k$. We first note explicitly a fact used in the proof of Lemma \ref{l:main}.

\begin{lemma}\label{l:min}
Let $(\ra{U},\leq,*,\vee,\wedge)$ be a universe of separations with an order function, and let $\la{s} \leq \la{r}$ be two separations in $\ra{U}$. If $\la{x}$ is a separation of minimal order such that $\la{s} \leq \la{x} \leq \la{r}$, then $\ra{x}$ is linked to $\ra{r}$.
\end{lemma}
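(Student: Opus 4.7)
The plan is to unwind the statement via the order-reversing involution and then exploit the interaction between the corner $\ra{x} \wedge \ra{y}$ and the submodular order function.

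First I would rewrite the hypothesis in the upward form. Since the involution is order reversing, the chain $\la{s} \leq \la{x} \leq \la{r}$ is equivalent to $\ra{r} \leq \ra{x} \leq \ra{s}$, and the minimality of $|\la{x}|$ among separations $\la{z}$ with $\la{s} \leq \la{z} \leq \la{r}$ translates to the minimality of $|\ra{x}|$ among $\ra{z}$ with $\ra{r} \leq \ra{z} \leq \ra{s}$ (the order function agrees on a separation and its inverse). To prove linkedness, fix an arbitrary $\ra{y} \geq \ra{r}$; the goal is $|\ra{y} \vee \ra{x}| \leq |\ra{y}|$.

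The key move is to examine the corner separation $\ra{x} \wedge \ra{y}$. From $\ra{r} \leq \ra{x}$ and $\ra{r} \leq \ra{y}$, the fact that $\wedge$ is the infimum gives $\ra{r} \leq \ra{x} \wedge \ra{y}$. On the other side, $\ra{x} \wedge \ra{y} \leq \ra{x} \leq \ra{s}$. Hence $\ra{x} \wedge \ra{y}$ lies in the window $[\ra{r}, \ra{s}]$, and so by the minimality hypothesis we obtain
\[
|\ra{x}| \leq |\ra{x} \wedge \ra{y}|.
\]

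Finally I would feed this into submodularity of the order function:
\[
|\ra{x} \vee \ra{y}| + |\ra{x} \wedge \ra{y}| \leq |\ra{x}| + |\ra{y}|.
\]
Combining with the previous inequality collapses the $|\ra{x}|$ and $|\ra{x} \wedge \ra{y}|$ terms and leaves $|\ra{x} \vee \ra{y}| \leq |\ra{y}|$, which is exactly the linkedness condition for $\ra{x}$ with respect to $\ra{r}$. There is no real obstacle here; the only subtlety is remembering to apply minimality to $\ra{x} \wedge \ra{y}$ (rather than, say, $\ra{x} \vee \ra{y}$, which need not sit below $\ra{s}$), and this is exactly why the lemma is phrased in terms of the lower side of the separations.
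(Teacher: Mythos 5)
Your proof is correct and is essentially the paper's own argument, just written on the other side of the involution: the paper applies minimality to $\la{x}\vee\la{y}$ in the window $[\la{s},\la{r}]$ and then uses submodularity, which is exactly your application of minimality to $\ra{x}\wedge\ra{y}=(\la{x}\vee\la{y})^*$ in $[\ra{r},\ra{s}]$. Nothing further is needed.
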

\begin{proof}
Given any separation $\ra{y} > \ra{r}$ we note that
\[
\la{s} \leq \la{x} \leq \la{x} \vee \la{y} \leq \la{r},
\]
and so by minimality of $\la{x}$ we have that $|\la{y} \vee \la{x}| \geq |\la{x}|$. Hence, by submodularity $|\ra{y} \vee \ra{x}| = |\la{y} \wedge \la{x}| \leq |\ra{y}|$, and so $\ra{x}$ is linked to $\ra{r}$. Note that, by symmetry, $\la{x}$ is linked to $\la{s}$ also.
\end{proof}

In what follows we will need to use two facts about a universe of separations. The first is true for any universe of separations, that for any two separations $\la{x}$ and $\la{y}$
\[
(\la{x} \wedge \la{y})^* = \ra{x} \vee \ra{y} \text{   and    } (\la{x} \vee \la{y})^* = \ra{x} \wedge \ra{y}.
\]

The second will not be true in general, and so we say a universe of separations is \emph{distributive} if for every three separations $\la{x},\la{y}$ and $\la{z}$ it is true that
\[
(\la{x} \wedge \la{y}) \vee \la{z} = (\la{x} \vee \la{z}) \wedge (\la{y} \vee \la{z}) \text{    and    } 
(\la{x} \vee \la{y}) \wedge \la{z} = (\la{x} \wedge \la{z}) \vee (\la{y} \wedge \la{z}).
\]
It is a simple check that the universe of separations of a graph is distributive.

\begin{lemma}\label{l:pair}
Let $(\ra{U},\leq,*,\vee,\wedge)$ be a distributive universe of separations with an order function, and let ${\la{x}\!}_1$ and ${\la{x}\!}_2$ be two separations in $\ra{U}$. Let ${\la{u}\!}_1$ be any separation of minimal order such that ${\la{x}\!}_1 \wedge {\ra{x}\!}_2 \leq {\la{u}\!}_1 \leq {\la{x}\!}_1$ and let ${\la{u}\!}_2 = {\la{x}\!}_2 \wedge {\ra{u}\!}_1$. Then the following statements hold:
\begin{itemize}
\item ${\ra{u}\!}_1$ is linked to ${\ra{x}\!}_1$ and ${\ra{u}\!}_2$ is linked to ${\ra{x}\!}_2$;
\item $| {\la{u}\!}_1 |\leq | {\la{x}\!}_1 |$ and $| {\la{u}\!}_2 | \leq | {\la{x}\!}_2 |$;
\item ${\la{u}\!}_1 = {\la{x}\!}_1 \wedge {\ra{u}\!}_2$.
\end{itemize}
\end{lemma}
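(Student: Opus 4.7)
The plan is to derive all three properties from the minimality of ${\la{u}\!}_1$, using submodularity of the order function throughout and distributivity only at the very end. Two parts are immediate: the linking of ${\ra{u}\!}_1$ to ${\ra{x}\!}_1$ follows from Lemma \ref{l:min} applied to the chain ${\la{x}\!}_1 \wedge {\ra{x}\!}_2 \leq {\la{u}\!}_1 \leq {\la{x}\!}_1$, and the bound $|{\la{u}\!}_1| \leq |{\la{x}\!}_1|$ is forced by minimality since ${\la{x}\!}_1$ itself is a valid candidate in that range.

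The key observation, which drives both the remaining linking statement and the remaining order bound, is the following: for every $\ra{y} \geq {\ra{x}\!}_2$ the separation $\ra{y} \wedge {\la{u}\!}_1$ also lies in the range $[{\la{x}\!}_1 \wedge {\ra{x}\!}_2,\, {\la{x}\!}_1]$. Indeed, the upper bound is clear from ${\la{u}\!}_1 \leq {\la{x}\!}_1$, and for the lower bound one combines ${\la{u}\!}_1 \geq {\la{x}\!}_1 \wedge {\ra{x}\!}_2$ with $\ra{y} \geq {\ra{x}\!}_2$ to conclude $\ra{y} \wedge {\la{u}\!}_1 \geq {\ra{x}\!}_2 \wedge ({\la{x}\!}_1 \wedge {\ra{x}\!}_2) = {\la{x}\!}_1 \wedge {\ra{x}\!}_2$. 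Minimality of ${\la{u}\!}_1$ then forces $|\ra{y} \wedge {\la{u}\!}_1| \geq |{\la{u}\!}_1|$, and submodularity applied to $\ra{y}$ and ${\la{u}\!}_1$ yields $|\ra{y} \vee {\la{u}\!}_1| \leq |\ra{y}|$. Setting $\ra{y} = {\ra{x}\!}_2$ and using the identity ${\ra{x}\!}_2 \vee {\la{u}\!}_1 = ({\la{x}\!}_2 \wedge {\ra{u}\!}_1)^{*} = {\ra{u}\!}_2$ gives $|{\la{u}\!}_2| \leq |{\la{x}\!}_2|$; for a general $\ra{y} \geq {\ra{x}\!}_2$ the same identity lets us rewrite $\ra{y} \vee {\ra{u}\!}_2 = \ra{y} \vee {\la{u}\!}_1$, so the submodular bound above is exactly the linking of ${\ra{u}\!}_2$ to ${\ra{x}\!}_2$.

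Property (iii) is then a short distributive computation:
\[
{\la{x}\!}_1 \wedge {\ra{u}\!}_2 \;=\; {\la{x}\!}_1 \wedge ({\ra{x}\!}_2 \vee {\la{u}\!}_1) \;=\; ({\la{x}\!}_1 \wedge {\ra{x}\!}_2) \vee ({\la{x}\!}_1 \wedge {\la{u}\!}_1) \;=\; ({\la{x}\!}_1 \wedge {\ra{x}\!}_2) \vee {\la{u}\!}_1 \;=\; {\la{u}\!}_1,
\]
where the second equality is distributivity and the last two use ${\la{u}\!}_1 \leq {\la{x}\!}_1$ and ${\la{x}\!}_1 \wedge {\ra{x}\!}_2 \leq {\la{u}\!}_1$. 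The only real obstacle is spotting that $\ra{y} \wedge {\la{u}\!}_1$ is the correct quantity to feed into the minimality criterion; after that, each of the three items reduces to a routine application of submodularity or distributivity.
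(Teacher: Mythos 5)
Your proof is correct and follows essentially the same route as the paper: Lemma \ref{l:min} for the first linking statement, the observation that $\ra{y}\wedge{\la{u}\!}_1$ stays in the interval $[{\la{x}\!}_1\wedge{\ra{x}\!}_2,{\la{x}\!}_1]$ so that minimality plus submodularity yields $|\ra{y}\vee{\la{u}\!}_1|\leq|\ra{y}|$, and the same distributivity computation for the third item. Your only deviation is cosmetic: you obtain $|{\la{u}\!}_2|\leq|{\la{x}\!}_2|$ as the special case $\ra{y}={\ra{x}\!}_2$ of the linking argument, where the paper runs a separate (but equivalent) submodularity computation.
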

\begin{proof}
We note that ${\ra{u}\!}_1$ is linked to ${\ra{x}\!}_1$ by Lemma \ref{l:min}. We want to show that ${\ra{u}\!}_2$ is linked to ${\ra{x}\!}_2$, that is, given any $\ra{r} > {\ra{x}\!}_2$ we need that $|\ra{r} \vee {\ra{u}\!}_2 |\leq |\ra{r}|$. We first claim that $\ra{r} \vee {\ra{u}\!}_2 = {\la{u}\!}_1 \vee \ra{r}$. Indeed,
\[
\ra{r} \vee {\ra{u}\!}_2 = \ra{r} \vee ({\ra{x}\!}_2 \vee {\la{u}\!}_1)= (\ra{r} \vee {\ra{x}\!}_2) \vee {\la{u}\!}_1 = \ra{r} \vee {\la{u}\!}_1.
\]

We also claim that ${\la{x}\!}_1 \wedge {\ra{x}\!}_2 \leq {\la{u}\!}_1 \wedge \ra{r} \leq {\la{x}\!}_1$. Indeed, ${\la{x}\!}_1 \wedge {\ra{x}\!}_2 \leq {\la{u}\!}_1$ and ${\la{x}\!}_1 \wedge {\ra{x}\!}_2 \leq {\ra{x}\!}_2 \leq \ra{r}$ and so
\[
{\la{x}\!}_1 \wedge {\ra{x}\!}_2 \leq {\la{u}\!}_1 \wedge \ra{r} \leq {\la{u}\!}_1 \leq {\la{x}\!}_1.
\]
Therefore, by minimality of ${\la{u}\!}_1$ we have that $| {\la{u}\!}_1 \wedge \ra{r} | \geq | {\la{u}\!}_1 |$ and so, by submodularity, it follows that
\[
| \ra{r} \vee {\ra{u}\!}_2 | = | {\la{u}\!}_1 \vee \ra{r} | \leq | \ra{r} |,
\]
as claimed.

By minimality of ${\la{u}\!}_1$ we have that $| {\la{u}\!}_1 | \leq | {\la{x}\!}_1 |$. Also we note that, since ${\la{u}\!}_2 = {\la{x}\!}_2 \wedge {\ra{u}\!}_1$ we have that
\[
| {\la{u}\!}_2 | + | {\la{x}\!}_2 \vee {\ra{u}\!}_1 | \leq | {\la{x}\!}_2 | + | {\la{u}\!}_1 |.
\]
However, $| {\la{x}\!}_2 \vee {\ra{u}\!}_1 | = | {\la{u}\!}_1 \wedge {\ra{x}\!}_2 |$, and we claim that
\[
{\la{x}\!}_1 \wedge {\ra{x}\!}_2 \leq {\la{u}\!}_1 \wedge {\ra{x}\!}_2 \leq {\la{x}\!}_1.
\]
Indeed, that second inequality is clear since, ${\la{u}\!}_1 \leq {\la{x}\!}_1$. For the first we note that ${\la{x}\!}_1 \wedge {\ra{x}\!}_2 \leq {\ra{x}\!}_2$, and also ${\la{x}\!}_1 \wedge {\ra{x}\!}_2 \leq {\la{x}\!}_1 \wedge {\ra{u}\!}_2 = {\la{u}\!}_1$, and so ${\la{x}\!}_1 \wedge {\ra{x}\!}_2 \leq {\la{u}\!}_1 \wedge {\ra{x}\!}_2$. Hence, by the minimality of ${\la{u}\!}_1$, we have $| {\la{x}\!}_2 \vee {\ra{u}\!}_1 | \geq | {\la{u}\!}_1 |$. Hence it follows that $ | {\la{u}\!}_2 | \leq | {\la{x}\!}_2 |$, as claimed.

For the last condition, we have that $ {\la{u}\!}_1 \leq {\la{x}\!}_1$ and ${\la{u}\!}_1 \leq {\la{u}\!}_1 \vee {\ra{x}\!}_2 = {\ra{u}\!}_2$, and so ${\la{u}\!}_1 \leq {\la{x}\!}_1 \wedge {\ra{u}\!}_2$. However,
\[
{\la{x}\!}_1 \wedge {\ra{u}\!}_2  = {\la{x}\!}_1 \wedge ( {\ra{x}\!}_2 \vee {\la{u}\!}_1 ) = ( {\la{x}\!}_1 \wedge {\ra{x}\!}_2 ) \vee ( {\la{x}\!}_1 \wedge {\la{u}\!}_1 ) = ( {\la{x}\!}_1 \wedge {\ra{x}\!}_2 ) \vee {\la{u}\!}_1 \leq {\la{u}\!}_1,
\]
and so ${\la{u}\!}_1 = {\la{x}\!}_1 \wedge {\ra{u}\!}_2$.
\end{proof}

We note that if we apply the above lemma to a pair of separations ${\la{x}\!}_1$ and ${\la{x}\!}_2$ such that $x_1$ distinguishes efficiently a pair of regular $k$-profiles, which $x_2$ does not distinguish, say ${\la{x}\!}_1 \in P_1$ and ${\ra{x}\!}_1 \in P_2$ and ${\la{x}\!}_2 \in P_1 \cap P_2$, then ${\la{x}\!}_1$ is of minimal order over all separations ${\la{x}\!}_1 \wedge {\ra{x}\!}_2 \leq {\la{u}\!}_1 \leq {\la{x}\!}_1$. Hence, in Lemma \ref{l:pair}, we can take ${\la{u}\!}_1= {\la{x}\!}_1$ and ${\la{u}\!}_2 = {\la{x}\!}_2 \wedge {\ra{x}\!}_1$. 

Indeed, suppose ${\la{x}\!}_1 \wedge {\ra{x}\!}_2 \leq {\la{u}\!}_1 \leq {\la{x}\!}_1$ is of minimal order. We note that ${\la{u}\!}_1 \in P_1$ by regularity. Similarly, let ${\la{u}\!}_2 = {\la{x}\!}_2 \wedge {\ra{u}\!}_1$ then ${\la{u}\!}_2 \in P_2$ by regularity. Recall that, by Lemma \ref{l:pair} ${\la{u}\!}_1 = {\la{x}\!}_1 \wedge {\ra{u}\!}_2$. Hence, ${\ra{u}\!}_1= {\ra{x}\!}_1 \vee {\la{u}\!}_2 \in P_2$. Therefore, $u_1$ distinguishes $P_1$ and $P_2$ and so, by the efficiency of $x_1$, $| {\la{x}\!}_1 | \leq | {\la{u}\!}_1 |$ as claimed.

The question remains as to what happens for a larger set of separations. It would be tempting to conjecture that the following extension of Lemma \ref{l:pair} holds, where we note that, in general, $(\ra{x} \wedge \ra{y}) \wedge \ra{z} = \ra{x} \wedge (\ra{y} \wedge \ra{z})$ and so, when writing such an expression we can, without confusion, omit the brackets.
\begin{conjecture}
Let $(\ra{U},\leq,*,\vee,\wedge)$ be a distributive universe of separations with an order function, and let $\{{\la{x}\!}_i \, : \, i \in [n]\}$ be a set of separations in $\ra{U}$. Then there exists a set of separations $\{ {\la{u}\!}_i \, : \, i \in [n]\}$ such that the following conditions hold:
\begin{itemize}
\item $\{ {\la{u}\!}_i \, : \, i \in [n]\}$ is a star;
\item ${\ra{u}\!}_i$ is linked to ${\ra{x}\!}_i$ for all $i \in [n]$;
\item $| {\la{u}\!}_i |\leq | {\la{x}\!}_i |$ for all $i \in [n]$;
\item ${\la{u}\!}_i = {\la{x}\!}_i \bigwedge_{j \neq i} {\ra{u}\!}_j$ for all $i \in [n]$.
\end{itemize}
\end{conjecture}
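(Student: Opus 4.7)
My plan is to prove the conjecture by induction on $n$, with the trivial base case $n=1$ and $n=2$ being Lemma \ref{l:pair}. A useful preliminary is that the \emph{linked} relation is transitive: if $\ra{r} \leq \ra{s} \leq \ra{t}$ with $\ra{s}$ linked to $\ra{r}$ and $\ra{t}$ linked to $\ra{s}$, then for any $\ra{x} \geq \ra{r}$ we have $|\ra{x} \vee \ra{t}| = |(\ra{x} \vee \ra{s}) \vee \ra{t}| \leq |\ra{x} \vee \ra{s}| \leq |\ra{x}|$, using linkedness of $\ra{t}$ to $\ra{s}$ (applied to $\ra{x} \vee \ra{s} \geq \ra{s}$) and then linkedness of $\ra{s}$ to $\ra{r}$. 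This allows me to chain applications of Lemma \ref{l:pair} without losing linkedness to the original ${\ra{x}\!}_i$.

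For the inductive step I would use a \emph{peeling} strategy. Set $\la{z} := \la{x}_2 \wedge \cdots \wedge \la{x}_n$ and apply Lemma \ref{l:pair} to the pair $(\la{x}_1, \la{z})$, obtaining $\la{u}_1$ of minimum order in the interval $[\la{x}_1 \wedge \ra{z}, \la{x}_1]$ together with its partner $\la{w} := \la{z} \wedge \ra{u}_1$. By distributivity $\la{w} = \bigwedge_{i \geq 2} (\la{x}_i \wedge \ra{u}_1)$, so applying the inductive hypothesis to the $(n-1)$-tuple $\{\la{x}_i \wedge \ra{u}_1\}_{i \geq 2}$ yields a star $\la{u}_2, \ldots, \la{u}_n$. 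Since each $\la{u}_i \leq \la{x}_i \wedge \ra{u}_1 \leq \ra{u}_1$ for $i \geq 2$, the full tuple $\{\la{u}_1, \ldots, \la{u}_n\}$ is automatically a star; condition (iii) follows by combining the inductive bound with $|\la{x}_i \wedge \ra{u}_1| \leq |\la{x}_i|$ via submodularity; and condition (iv) for $i \geq 2$ follows from the inductive fixed-point relation by absorbing $\ra{u}_1$ into the meet, using $\la{u}_i \leq \ra{u}_1$.

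The two places where I expect the work to be delicate are condition (ii) for $i \geq 2$ and condition (iv) for $i = 1$. For (ii), the induction only gives $\ra{u}_i$ linked to $\ra{x}_i \vee \la{u}_1$ rather than to $\ra{x}_i$ itself; for $\ra{y} \geq \ra{x}_i$ with $\ra{y} \not\geq \la{u}_1$ one needs a further submodular argument, expressing $\ra{y} \vee \ra{u}_i$ through $\ra{y} \vee \la{u}_1$ and exploiting that $\ra{u}_1$ is itself linked to $\ra{x}_1$. For (iv) with $i = 1$, the inequality $\la{u}_1 \leq \la{x}_1 \wedge \bigwedge_{j \geq 2} \ra{u}_j$ is immediate from the star property, but the reverse direction requires showing that $\la{x}_1 \wedge \bigwedge_{j \geq 2} \ra{u}_j$ lies in the interval $[\la{x}_1 \wedge \ra{z}, \la{x}_1]$ on which $\la{u}_1$ was minimised, and in particular that $\bigwedge_{j \geq 2} \ra{u}_j \geq \ra{z} = \bigvee_{j \geq 2} \ra{x}_j$; this does not appear to follow from the inductive hypothesis alone, and is the main obstacle I foresee.

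If the direct peeling argument does not close these gaps, the fallback plan is an iterative refinement: after producing $\la{u}_1, \ldots, \la{u}_n$ as above, re-minimise $\la{u}_1$ inside the possibly larger interval $[\la{x}_1 \wedge \bigwedge_{j \geq 2} \ra{u}_j, \la{x}_1]$, re-run the inductive construction with the updated $\la{u}_1$, and repeat. Termination should follow from the lexicographic potential $(|\la{u}_1|, \ldots, |\la{u}_n|) \in \mathbb{Z}_{\geq 0}^n$ being non-increasing, and one would then try to show that every stable fixed point of the iteration satisfies all four conditions simultaneously, via a careful use of distributivity mirroring its role in the proof of Lemma \ref{l:pair}. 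I expect this combined fixed-point/uncrossing argument, rather than a single peeling, to be where the real substance of the proof lies.
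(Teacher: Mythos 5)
You have not proved the statement, and you are right to flag where the trouble lies: this statement is in fact left as an open \emph{conjecture} in the paper. The paper explicitly remarks that ``it seems difficult to ensure that the fourth condition holds with an inductive argument'', which is precisely the obstacle you run into. What the paper actually proves (Lemma \ref{l:star}) is a weakened version, restricted to graph separations, in which the exact fixed-point identity ${\la{u}\!}_i = {\la{x}\!}_i \bigwedge_{j \neq i} {\ra{u}\!}_j$ is relaxed to the one-sided bound ${\la{x}\!}_j \bigwedge_i {\ra{r}\!}_i \bigwedge_{k \neq j} {\ra{x}\!}_k \leq {\la{u}\!}_j \leq {\la{x}\!}_j$, together with a vertex-covering condition. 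That weaker statement is obtained by exactly the kind of pairwise uncrossing you describe as your fallback: run through all pairs in some order, replace each pair by the nested pair of Lemma \ref{l:pair}, and use transitivity of linkedness (which you prove correctly) to retain linkedness to the originals. No fixed-point or termination argument is needed there because only the inequality, not equality, is claimed, and each separation is touched only finitely often (once per pair).

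The concrete gaps in your peeling argument are real and not repaired by your sketch. For condition (iv) with $i=1$ you would need $\bigwedge_{j \geq 2} {\ra{u}\!}_j \geq \ra{z}$, i.e.\ that the uncrossed star is still ``above'' the original meet, and nothing in the inductive hypothesis gives this; indeed the ${\la{u}\!}_j$ are obtained by shrinking, so the reverse inequality is what naturally holds. For condition (ii) with $i \geq 2$, linkedness of ${\ra{u}\!}_i$ to ${\ra{x}\!}_i \vee {\la{u}\!}_1$ does not transfer to linkedness to ${\ra{x}\!}_i$, since ${\ra{x}\!}_i \vee {\la{u}\!}_1$ need not be $\geq {\ra{x}\!}_i$-comparable in the way transitivity requires, and the ``further submodular argument'' you gesture at is exactly the missing content. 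Finally, your iterative-refinement fallback lacks a termination proof: the order vector being non-increasing in $\mathbb{Z}_{\geq 0}^n$ does not prevent the separations themselves from changing indefinitely at constant order, and you give no argument that a stable point of the iteration satisfies the exact identity (iv). So the proposal should be regarded as a reasonable plan of attack on an open problem, not a proof; if you only need the inequality form of (iv), the pairwise-uncrossing route does go through and is what the paper does.
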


However, it seems difficult to ensure that the fourth condition holds with an inductive argument. We were able to show the following in the case of graph separations, by repeatedly applying Lemma \ref{l:pair}. The extra sets $\{ {\la{r}\!}_i \}$ and $\phi$ appearing in the statement will be useful for the specific application we have in mind, the conclusion when these are empty is the weakened form of the above conjecture.

\begin{lemma}\label{l:star}
Let $G$ be a graph, $k\geq 3$, and let $\phi$ be the set of $k$-profiles in $G$. Suppose that $\{ {\la{r}\!}_i = (A_i,B_i)  \, : \, i \in [n]\} $ is a star composed of $\phi$-essential separations, which distinguish efficiently some set $\phi'$ of regular $k$-profiles and let $\{ {\la{x}\!}_j=(X_j,Y_j)  \, : \, j \in [m]\} \subseteq {\ra{S}\!}_k$ be such that ${\la{x}\!}_j \in P$ for all $j \in [m]$ and $P \in \phi'$. Then there exists a set $\{ {\la{u}\!}_j \, : \, j \in [m]\}$ such that the following conditions hold:

\begin{itemize}
\item $\{ {\la{r}\!}_i \, : \, i \in [n]\} \cup \{ {\la{u}\!}_j \, : \, j \in [m]\} $ is a star;
\item $| {\la{u}\!}_j | \leq | {\la{x}\!}_j |$ for all $j \in [m]$;
\item ${\ra{u}\!}_j$ is linked to ${\ra{x}\!}_j$ for all $j \in [m]$;
\item ${\la{x}\!}_j \bigwedge_i {\ra{r}\!}_i  \bigwedge_{k \neq j} {\ra{x}\!}_k \leq {\la{u}\!}_j \leq {\la{x}\!}_j$ for all $j \in [m]$;
\item $\bigcup_{j=1}^m X_j \cup \bigcup_{i=1}^n A_i = \bigcup_{j=1}^m U_j \cup \bigcup_{i=1}^n A_i $.
\end{itemize}
\end{lemma}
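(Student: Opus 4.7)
The plan is to apply Lemma~\ref{l:pair} iteratively in two phases: first to uncross each $\la{x}_j$ with the separations $\la{r}_i$, then to uncross the resulting separations pairwise among themselves.

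In the first phase, I would process each pair $(\la{r}_i, \la{x}_j)$ using the remark immediately following Lemma~\ref{l:pair}: since $\la{r}_i$ efficiently distinguishes a pair of regular profiles in $\phi'$ and since $\la{x}_j$ lies in every profile of $\phi'$, that remark applies and produces the pair $(\la{r}_i, \la{x}_j \wedge \ra{r}_i)$ with $\la{r}_i$ unchanged. A short check using regularity of the profiles shows that $\la{x}_j \wedge \ra{r}_i$ also lies in every profile in $\phi'$, so we may iterate over $i$, ultimately replacing $\la{x}_j$ by $\la{x}_j^{(1)} := \la{x}_j \wedge \bigwedge_i \ra{r}_i$. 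By the order bound in Lemma~\ref{l:pair} and transitivity of linkedness, $\la{x}_j^{(1)}$ satisfies $|\la{x}_j^{(1)}| \leq |\la{x}_j|$, has $\ra{x}_j^{(1)}$ linked to $\ra{x}_j$, and $\la{x}_j^{(1)} \leq \ra{r}_i$ for every $i$.

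In the second phase, I would apply Lemma~\ref{l:pair} to pairs of the current separations, processing all $\binom{m}{2}$ pairs in some fixed order. Each application to a pair $(\la{v}_j, \la{v}_k)$, where $\la{v}_j$ is the current state of $\la{x}_j^{(1)}$, outputs new separations $\la{v}_j' \leq \la{v}_j$ and $\la{v}_k' \leq \la{v}_k$ that are nested via the identity $\la{v}_j' = \la{v}_j \wedge \ra{v}_k'$ from that lemma. Since each update only shrinks its inputs, any previously established nestedness $\la{v}_j \leq \ra{v}_l$ is preserved, because $\la{v}_j' \leq \la{v}_j \leq \ra{v}_l \leq \ra{v}_l'$. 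Hence after all pairs have been processed, the $\la{u}_j := \la{v}_j$ are pairwise nested, and combined with Phase~1 form a star with the $\la{r}_i$; this establishes condition~(1).

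Conditions~(2) and~(3) follow from Lemma~\ref{l:pair} and transitivity of linkedness applied along the chain of updates. For condition~(4), each shrinking step meets $\la{v}_j$ with some $\ra{r}_i$ or $\ra{v}_k$, and since $\la{v}_k \leq \la{x}_k$ throughout we have $\ra{v}_k \geq \ra{x}_k$, so the final $\la{u}_j$ lies above $\la{x}_j \wedge \bigwedge_i \ra{r}_i \wedge \bigwedge_{k \neq j} \ra{x}_k$. For condition~(5), after Phase~1 any $v \in X_j \setminus \bigcup_i A_i$ lies in $\bigcap_i B_i$ and hence in the updated small side $X_j \cap \bigcap_i B_i$; the identity $\la{u}_1 = \la{x}_1 \wedge \ra{u}_2$ in Lemma~\ref{l:pair} then implies that if $V_j, V_k$ denote the small sides before an uncrossing and $V_j', V_k'$ those after, then $V_j \cup V_k = V_j' \cup V_k'$, so each Phase~2 update preserves the union of small sides. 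The main obstacle will be the bookkeeping in Phase~2: the worry that successive uncrossings might undo nestedness established earlier. The monotonicity of the updates --- each application of Lemma~\ref{l:pair} only shrinks its inputs --- is precisely what defuses this, and is also exactly why one obtains only the weaker lower bound in condition~(4), using the original $\ra{x}_k$ rather than $\ra{u}_k$, matching the weaker form rather than the conjecture mentioned above.
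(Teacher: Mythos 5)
Your proposal is correct and takes essentially the same route as the paper's own proof: iterated pairwise uncrossing via Lemma~\ref{l:pair}, using the remark after that lemma to keep each ${\la{r}\!}_i$ fixed, monotonicity of the updates to preserve nestedness, transitivity of linkedness, the inductive lower bound for the fourth condition, and the preservation of the union of small sides for the fifth. Your two-phase ordering (first the ${\la{r}\!}_i$--${\la{x}\!}_j$ pairs, then the ${\la{x}\!}_j$--${\la{x}\!}_k$ pairs) is just a particular instance of the arbitrary order over all pairs used in the paper.
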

\begin{proof}
Let us start with a set of separations 
\[
Y = \{ {\la{y}\!}_i \,: \, i \in [n+m] \},
\]
and some arbitrary order on the set of pairs $Y^{(2)}$. Initially we set ${\la{y}\!}_i = {\la{x}\!}_j$ for $j \in [m]$ and ${\la{y}\!}_{m+i} = {\la{r}\!}_i$ for $i \in [n]$. For each pair $\{ {\la{y}\!}_i ,{\la{y}\!}_j \}$ in order we apply Lemma \ref{l:pair} to this pair of separations and replace $\{ {\la{y}\!}_i, {\la{y}\!}_j \}$ with the nested pair given by Lemma \ref{l:pair}. After we have done this for each pair, we let ${\la{u}\!}_j := {\la{y}\!}_j$ for each $j \in [m]$.

Note that, since each ${\la{x}\!}_j$ is $\phi$-inessential, and with each application of Lemma \ref{l:pair} we only ever replace a separation by one less than or equal to it, ${\la{y}\!}_j$ is also $\phi$-inessential at each stage of this process for $j \in [m]$. Also, $\{ {\la{r}\!}_i\, : \, i \in [n]\} $ is a star, and so if we apply Lemma \ref{l:pair} to a pair ${\la{r}\!}_i$ and ${\la{r}\!}_k$, neither is changed. Therefore, by the comment after Lemma \ref{l:pair}, we may assume that at every stage in the process ${\la{y}\!}_{m+i} = {\la{r}\!}_i$ for each $i \in [n]$. In particular at the end of the process we have that
\[
Y = \{ {\la{r}\!}_i\, : \, i \in [n]\} \cup \{ {\la{u}\!}_j \, : \, j \in [m]\}.
\]

To see that the first condition is satisfied we note that, given any pair of separations ${\la{y}\!}_i$ and ${\la{y}\!}_j \in Y$, at some stage in the process we applied Lemma \ref{l:pair} to this pair, and immediately after this step we have that ${\la{y}\!}_i \leq {\ra{y}\!}_j$. Since Lemma \ref{l:pair} only ever replaces a separation with one less than or equal to it, it follows that at the end of the process $Y$ is a star. Therefore the family $\{ {\la{r}\!}_i \, : \, i \in [n]\} \cup \{ {\la{u}\!}_j \, : \, j \in [m] \}$ forms a star.

To see that the second condition is satisfied we note that, whenever we apply Lemma \ref{l:pair} we only ever replace a separation with one whose order is less than or equal to the order of the original separation. 

To see that the third condition is satisfied we note that whenever we apply Lemma \ref{l:pair} we only ever replace a separation with one whose inverse is linked to the inverse of the original separation. Therefore it would be sufficient to show that the property of being linked to is transitive. Indeed, suppose that $\ra{r} > \ra{s} > \ra{t}$, $\ra{r}$ is linked to $\ra{s}$ and $\ra{s}$ is linked to $\ra{t}$, all in some separation system $S$. Let $\ra{x} > \ra{t}$ also be in ${\ra{S}\!}_k$. 

However, since $\ra{s}$ is linked to $\ra{t}$, it follows that $|\ra{x} \vee \ra{s}| \leq |\ra{x}|$. Then, since $\ra{x} \vee \ra{s} \geq \ra{s}$ and $\ra{r}$ is linked to $\ra{s}$, it follows that $| (\ra{x} \vee \ra{s}) \vee \ra{r} | \leq |\ra{x} \vee \ra{s}| \leq |\ra{x}|$. However, since $\ra{s} \leq \ra{r}$, $ (\ra{x} \vee \ra{s}) \vee \ra{r} = \ra{x} \vee \ra{r}$, and so $\ra{r}$ is linked to $\ra{t}$.

To see that the fourth condition is satisfied let us consider ${\la{y}\!}_j$ for some $j \in [m]$. There is some sequence of separations ${\la{x}\!}_j = {\la{v}\!}_0 \geq {\la{v}\!}_1 \geq \ldots \geq {\la{v}\!}_t = {\la{u}\!}_j$ that are the values ${\la{y}\!}_j$ takes during this process, corresponding to the $t$ times we applied Lemma \ref{l:pair} to a pair containing the separation ${\la{y}\!}_j$. Suppose that the other separations in those pairs were ${\la{y}\!}_{i_1}, {\la{y}\!}_{i_2}, \ldots , {\la{y}\!}_{i_t}$, and let us denote by ${\la{w}\!}_k$ the value of the separations ${\la{y}\!}_{i_k}$ at the time which we applied Lemma \ref{l:pair} to the pair $\{ {\la{y}\!}_j, {\la{y}\!}_{i_k} \}$.

We claim inductively that for all $0 \leq r \leq t$
\[
{\la{v}\!}_0 \bigwedge_{k=1}^r {\ra{w}\!}_k \leq {\la{v}\!}_r \leq {\la{v}\!}_0.
\]
The statement clearly holds for $r=0$. Suppose it holds for $r-1$. We obtain ${\la{v}\!}_r$ by applying Lemma \ref{l:pair} to the pair $\{ {\la{v}\!}_{r-1}, {\la{w}\!}_r \}$, giving us the pair $\{ {\la{v}\!}_r, {\la{z}\!}\}$. We have that ${\la{v}\!}_{r-1} \wedge {\ra{z}\!} = {\la{v}\!}_r$ and so, since ${\ra{w}\!}_r \leq {\ra{z}\!} $ it follows that
\[
{\la{v}\!}_{r-1} \wedge {\ra{w}\!}_r \leq {\la{v}\!}_r \leq {\la{v}\!}_{r-1}.
\]
 By the induction hypothesis we know that
 \[
{\la{v}\!}_0 \bigwedge_{k=1}^{r-1} {\ra{w}\!}_k \leq {\la{v}\!}_{r-1} \leq {\la{v}\!}_0,
\]
 and so 
\[
{\la{v}\!}_0 \bigwedge_{k=1}^r {\ra{w}\!}_k \leq {\la{v}\!}_{r-1} \wedge {\ra{w}\!}_r  \leq {\la{v}\!}_r \leq {\la{v}\!}_{r-1} \leq {\la{v}\!}_0
\]
as claimed.

For each of the ${\la{w}\!}_k$ there is some separation ${\la{s}\!}_k$ from our original set (that is some ${\la{r}\!}_i$ or ${\la{x}\!}_j$) such that ${\la{w}\!}_k \leq {\la{s}\!}_k$ and so, since ${\ra{s}\!}_k \leq {\ra{w}\!}_k$, and since we apply Lemma \ref{l:pair} to each pair of separations in our original set, we have that
\[
{\la{v}\!}_0 \bigwedge_i {\ra{r}\!}_i  \bigwedge_{k \neq j} {\ra{x}\!}_j \leq {\la{v}\!}_0 \bigwedge_{k=1}^t {\ra{w}\!}_k.
\]

So, recalling that ${\la{v}\!}_0 = {\la{x}\!}_j$ and ${\la{v}\!}_t = {\la{u}\!}_j$, we see that
\[
{\la{x}\!}_j \bigwedge_i {\ra{r}\!}_i  \bigwedge_{k \neq j} {\ra{x}\!}_k \leq {\la{u}\!}_j \leq {\la{x}\!}_j
\]
as claimed.

Finally we note that, if we apply Lemma \ref{l:pair} to a pair of separations $(C,D)$ and $(E,F)$, resulting in the nested pair $\{(C',D'), (E',F')\}$, then 
\[
C \cup E = C' \cup E'.
\]
Indeed, we have that $(C \cap F', D \cup E') = (C',D')$ and $(E \cap D' , F \cup C')=(E',F')$ and so we have that $C' \cup E' = (C \cap F') \cup E' \supseteq C$ and similarly $C' \cup E' = C' \cup (E \cap D') \supseteq E$ and so $C' \cup E' \supseteq C \cup E$. However, since $C' \subseteq C$ and $E' \subseteq E$ we also have $C' \cup E' \subseteq C \cup E$.
\end{proof}

\subsection{Refining the essential parts}
The content of Lemma \ref{l:star} can be thought of as a procedure for turning an arbitrary set of separations into a star which is in some way `close' to the original set, and is linked pairwise to the original set. We note that the second property guarantees us that this star lies in the same $S_k$ as the original set.

Let us say a few words about the other properties of the star which represent this closeness. It will be useful to think about these properties in terms of how we can use this lemma to refine further an essential part in a $k$-tangle-distinguishing tree-decomposition. 

Suppose $\{ {\la{x}\!}_j \, : \, j \in [m]\} = \cc{M}_I(O)$ for some $k$-tangle $O$, and $\{ {\la{r}\!}_i \, : \, i \in [n]\}$ is the star of separations at the vertex where $O$ is contained in a tree-decomposition, specifically one where each $r_i$ distinguishes efficiently some pair of $k$-tangles. By applying Lemma \ref{l:star} we get a star $\{ {\la{u}\!}_j \, : \, j \in [m]\}$ satisfying the conclusions of the lemma. For each non-trivial ${\la{x}\!}_j$, by Lemma \ref{l:iness}, there exists an irredundant $S_k$-tree over $\cc{T}_k^* \cup \{ {\la{x}\!}_j \}$ containing ${\la{x}\!}_j$ as a leaf separation, such that ${\la{x}\!}_j$ is not the image of any other edge. We can then use Lemma \ref{l:link} to shift each of these $S_k$-trees onto ${\ra{u}\!}_j$, giving us an $S_k$-tree over $\cc{T}_k^* \cup \{ {\la{u}\!}_j \}$ . If ${\la{x}\!}_j$ is trivial then so is ${\la{u}\!}_j$, and so there is an obvious $S_k$-tree over $\cc{T}_k^* \cup \{ {\la{u}\!}_j \}$ containing ${\ra{u}\!}_j$ as a leaf separation, that with a single edge corresponding to $u_j$.

Doing the same for each $k$-tangle in the graph and taking the union all of these $S_k$-trees, together with the tree-decomposition from Corollary \ref{c:tangle}, will give us a refinement of this tree-decomposition which maintains the property of each inessential part being too small to contain a $k$-tangle, but also further refines the essential parts. The properties of the star given by Lemma \ref{l:star} give us some measurement of how effective this process is in refining the essential parts of the graph. 

We first note that, given a $k$-tangle $O$, which is contained in some part $V_t$ of a $k$-tangle-distinguishing tree-decomposition, by the fifth property in Lemma \ref{l:star} every vertex in the part $V_t$ which lies on the small side of some maximal inessential separation in $O$ will be in some inessential part of this refinement. 

However this property is also satisfied by the rather naive refinement formed by just taking the union of some small separations $(A_i,V)$ with the $A_i$ covering the same vertex set. The problem with this naive decomposition is it does not really refine the part $V_t$, since there is a still a part with vertex set $V_t$ in the new decomposition. Ideally we would like our refinement to make this essential part as small as possible, to more precisely exhibit how the $k$-tangle $O$ lies in the graph.

Our refinement comes some way towards this, as evidenced by the fourth condition . For example if we have some separation $\la{s} = (A,B)$ which lies `behind' some maximal inessential separation in $O$, that is $\la{s} \leq {\la{x}\!}_j$ for some $j$, and is nested $\cc{M}_I(O) \cup \{ {\la{r}\!}_i \, : \, i \in [n]\}$, then it is easy to check that the fourth property guarantees it will also lie behind some ${\la{u}\!}_k$ given by Lemma \ref{l:star}. So, in the refined tree-decomposition, the part containing $O$ will not contain any vertices that lie strictly in the small side of such a separation, $A \setminus B$.

Suppose $\{ {\la{r}\!}_i \, : \, i \in [n]\}$ is an essential part in a tree-decomposition $(T,\cc{V})$ containing a tangle $O$. We say a vertex $v \in V$ is \emph{inessentially separated from $O$ relative to $(T,\cc{V})$} if there is a separation $(A,B)$ which is nested with $\cc{M}_I(O) \cup \{ {\la{r}\!}_i\, : \, i \in [n]\}$ such that $v \in A \setminus B$, and there exists some $(X,Y) \in \cc{M}_I(O)$ such that $(A,B) \leq (X,Y)$. For example, in Figure \ref{f:inessential}, the vertices in the long paths are inessentially separated from the tangles corresponding to the complete subgraphs relative to the canonical tangle-distinguishing tree-decomposition.

\begin{theorem}\label{t:min}
For every graph $G$ and $k \geq 3$ there exists a tree-decomposition $(T,\cc{V})$ of $G$ of adhesion $<\!k$ with the following properties
\begin{itemize}
\item The tree-decomposition $(T',\cc{V}')$ induced by the essential separations is canonical and distinguishes every $k$-tangle in $G$;
\item The torso of every inessential part has branch-width $<\!k$.
\item For every essential part $V_t$ which contains a tangle $O$, there are no vertices $v \in V_t$ which are inessentially separated from $O$ relative to $(T',\cc{V}')$.
\end{itemize}
\end{theorem}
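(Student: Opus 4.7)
The plan is to start from the canonical tangle-distinguishing tree-decomposition supplied by Corollary \ref{c:tangle} (applied with $\cc{F} = \cc{T}_k^*$), which yields a tree-decomposition $(T',\cc{V}')$ whose essential separations are canonical and efficiently distinguish every $k$-tangle, and whose inessential parts have torsos of branch-width $<\!k$. I would then refine each essential part individually, using Lemma \ref{l:star} to produce nested proxy separations and Lemma \ref{l:iness} together with Lemma \ref{l:link} to decompose the inessential sides lying beyond them.

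Fix an essential part $V_t$ of $(T',\cc{V}')$ containing a $k$-tangle $O$, with star $\sigma = \{\la{r}_i\}$, and let $\cc{M}_I(O) = \{\la{x}_j\}$. Apply Lemma \ref{l:star} to $\sigma$ together with $\{\la{x}_j\}$, taking $\phi'$ to be the full set of $k$-tangles of $G$ (valid since each $\la{r}_i$ efficiently distinguishes some pair of tangles by Corollary \ref{c:tangle}, and every inessential separation lies in every tangle): this produces a star $\{\la{u}_j\}$ nested with $\sigma$, with each $\ra{u}_j$ linked to $\ra{x}_j$, and satisfying properties (4) and (5) of the lemma. For each $j$, I would construct an $S_k$-tree over $\cc{T}_k^* \cup \{\la{u}_j\}$ having $\ra{u}_j$ as a leaf separation: when $\la{u}_j$ is trivial, the trivial two-vertex tree with edge labelled $u_j$ suffices; otherwise, invoke Lemma \ref{l:iness} to obtain an $S_k$-tree over $\cc{T}_k^* \cup \{\la{x}_j\}$, make it irredundant with $\ra{x}_j$ as the image of a unique leaf using the remark after Lemma \ref{l:link}, and then shift onto $\ra{u}_j$ via Lemma \ref{l:link}. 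This is valid because $\ra{u}_j$ being linked to $\ra{x}_j$ implies it emulates $\ra{x}_j$ in $\ra{S}_k$, and $\cc{T}_k^*$ is closed under shifting. Gluing each of these refinement trees into $(T',\cc{V}')$ at the essential vertex $t$ along the edge corresponding to $\ra{u}_j$ produces the desired tree-decomposition $(T,\cc{V})$.

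The first two conclusions are essentially immediate from the construction: the essential separations of $(T,\cc{V})$ are precisely those of $(T',\cc{V}')$, which is canonical and efficiently distinguishes every $k$-tangle; and the inessential parts of $(T,\cc{V})$ are either inherited from $(T',\cc{V}')$ (branch-width $<\!k$ by Corollary \ref{c:tangle}) or non-leaf parts of refinement trees whose stars lie in $\cc{T}_k^*$ (branch-width $<\!k$ by Theorem \ref{t:genprof}). The main obstacle is the third conclusion. Suppose, for contradiction, that a vertex $v$ in the essential part of $(T,\cc{V})$ containing $O$ were inessentially separated from $O$ relative to $(T',\cc{V}')$, witnessed by a separation $\la{s}=(A,B)$ with $v\in A\setminus B$, $\la{s}$ nested with $\cc{M}_I(O)\cup\sigma$, and $\la{s}\leq\la{x}_j$ for some $j$. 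The strategy is to show $\la{s}\leq\la{u}_j$ via property (4) of Lemma \ref{l:star}: a case analysis of the nesting of $\la{s}$ with each $\la{r}_i$ excludes $\la{s}\geq\la{r}_i$ and $\la{s}\geq\ra{r}_i$ by regularity of the $k$-tangles distinguished by $r_i$ (chaining through $\la{x}_j$, which lies in every tangle), and excludes $\la{s}\leq\la{r}_i$ because $v$ lies in every $B_i$ as a member of the essential part; an analogous analysis against each $\la{x}_l$ with $l\neq j$ uses regularity, maximality of the $\la{x}_l$ in $\cc{M}_I(O)$, and the fact that $\la{s}$ is not small (otherwise $A\setminus B$ would be empty), leaving only $\la{s}\leq\ra{x}_l$. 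Hence $\la{s}$ lies below the corner $\la{x}_j\wedge(\bigwedge_i\ra{r}_i)\wedge(\bigwedge_{l\neq j}\ra{x}_l)\leq\la{u}_j$. Writing $\la{u}_j=(C,D)$, this gives $A\subseteq C$ and $B\supseteq D$, so $v\in A\setminus B$ forces $v\notin D$, contradicting the fact that $v$ lies in the new essential part at $t$, which is contained in $D$.
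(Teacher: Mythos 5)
Your construction is exactly the paper's: start from Corollary \ref{c:tangle}, for each tangle $O$ uncross $\cc{M}_I(O)$ against the star $\sigma$ at its part via Lemma \ref{l:star} (with $\phi'$ the set of $k$-tangles), obtain trees behind each non-trivial $\la{x}_j$ from Lemma \ref{l:iness}, shift them onto the $\ra{u}_j$ with Lemma \ref{l:link} (linked implies emulation in $\ra{S}_k$, closure under shifting of $\cc{T}_k^*$), and glue; the first two bullets follow as you say, and this is precisely the route the paper takes, where the key containment for the third bullet is dismissed with ``it is easy to check that the fourth property guarantees it''.

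The gap is in the one step you do try to check in detail: the exclusion of the case $\la{s}\leq\la{x}_l$ for $l\neq j$ on the grounds that $\la{s}$ is not small. That reasoning is only valid when $x_l$ is nested with $x_j$: then maximality and regularity leave $\la{x}_j\leq\ra{x}_l$ as the only alignment, so $\la{s}\leq\la{x}_j$ together with $\la{s}\leq\la{x}_l$ would force $A\subseteq X_l\cap Y_l$ and $B\supseteq X_l\cup Y_l=V$, i.e.\ $\la{s}$ small. But $\cc{M}_I(O)$ need not be nested -- the paper emphasises exactly this, which is why Lemma \ref{l:star} is needed at all -- and if $x_j$ and $x_l$ cross, then $\la{s}\leq\la{x}_j\wedge\la{x}_l=(X_j\cap X_l,\,Y_j\cup Y_l)$ is compatible with $A\setminus B\neq\emptyset$, since the strict small sides $X_j\setminus Y_j$ and $X_l\setminus Y_l$ may intersect; none of regularity, maximality or non-smallness rules this out. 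In that situation the lower bound in the fourth condition of Lemma \ref{l:star}, which involves $\ra{x}_l$, no longer sandwiches $\la{s}$ below $\la{u}_j$, so your argument does not produce any $m$ with $\la{s}\leq\la{u}_m$, and the final contradiction does not go through. To close this you would need to exploit the remaining hypothesis you have not used -- that $\la{s}$ is nested with \emph{all} of $\cc{M}_I(O)$, not just with $x_j$ and $x_l$ (note for instance that maximal inessential separations can include small separations $(C,V)$ of order $k-1$, whose nestedness with $\la{s}$ is a genuine constraint) -- or else strengthen what Lemma \ref{l:star} delivers beyond its fourth condition; as written, the case of two crossing members of $\cc{M}_I(O)$ above $\la{s}$ is unproved.
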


Given a vertex $v \in V$ we say that that $x$ is \emph{well separated} from $O$ if there is a separation $(A,B)$ which is nested with $\cc{M}(O)$ such that $v \in A \setminus B$, and there exists some $(X,Y) \in \cc{M}(O)$ such that $(A,B) \leq (X,Y)$. 

We can think of the vertices which are well separated from $O$ as being `far away' from $O$ in the graph. Indeed, if $\cc{M}(O)$ is a star, then $O$ is a $k$-block, and the set of vertices well separated from $O$ are just the vertices not in the $k$-block. In general a tangle will not correspond as closely to a concrete set of vertices as a $k$-block, and crossing separations in $\cc{M}(O)$ somehow demonstrate the uncertainty of whether a vertex `lives in' $O$ or not. However, if a separation $(A,B) \in O$ is nested with $\cc{M}(O)$, then $O$ should be in some way fully contained in $B$, and so the vertices in $A \setminus B$ are `far away' from $O$. 

\begin{question}
For every graph $G$, does there exist a tree-decomposition which distinguishes the $k$-tangles in a graph, whose essential parts are small in the sense that for each $k$-tangle $O$, there is no vertex $x$ which can be well separated from $O$ in the part of the tree-decomposition which contains $O$? Does there exist such a tree-decomposition with the further property that the inessential parts have branch-width $<\!k$?
\end{question}

\bibliographystyle{plain}
\bibliography{refine}
\end{document}